%% LyX 1.5.0 created this file.  For more info, see http://www.lyx.org/.
%% Do not edit unless you really know what you are doing.
\documentclass[a4paper,oneside,english]{amsart}
\usepackage[T1]{fontenc}
\usepackage[utf8]{inputenc}
\usepackage{verbatim}
\usepackage{amssymb}

\makeatletter
%%%%%%%%%%%%%%%%%%%%%%%%%%%%%% Textclass specific LaTeX commands.
  \theoremstyle{remark}
  \newtheorem*{acknowledgement*}{Acknowledgement}
  \theoremstyle{plain}
  \newtheorem*{thm*}{Theorem}
  \theoremstyle{plain}
  \newtheorem{thm}{Theorem}
  \theoremstyle{plain}
  \newtheorem{lem}{Lemma}
 \theoremstyle{definition}
 \newtheorem*{defn*}{Definition}
  \theoremstyle{remark}
  \newtheorem*{rem*}{Remark}
\newenvironment{lyxlist}[1]
{\begin{list}{}
{\settowidth{\labelwidth}{#1}
 \setlength{\leftmargin}{\labelwidth}
 \addtolength{\leftmargin}{\labelsep}
 }}
{\end{list}}
  \theoremstyle{plain}
  \newtheorem{prop}{Proposition}

%%%%%%%%%%%%%%%%%%%%%%%%%%%%%% User specified LaTeX commands.
%\usepackage{pst-all,pstricks-add,calc,fp} for submitting
%\usepackage{pstricks,pst-text,pst-plot,pstricks-add,fp}
\usepackage{amscd}
\usepackage{graphicx}
\usepackage[hang,centerlast]{subfigure}
\numberwithin{equation}{section}
\numberwithin{thm}{section}
\numberwithin{lem}{section}
\numberwithin{prop}{section}
%\usepackage[latin1]{inputenc}
%prevent line break in formulas
\relpenalty=9990
\binoppenalty=9999 
\DeclareMathOperator{\lspan}{span}
\def\mypsteps#1#2{#1}

\usepackage{babel}
\makeatother

\begin{document}

\title{Heat kernel estimates for the Gru{\v{s}}in operator}

\author{Martin Paulat}

\email{martin@inter-zone.de}

\date{07/30/07}

\begin{abstract}
We study the geometry associated to the Grušin operator \[
G=\Delta_{x}+\left|x\right|^{2}\partial_{u}^{2}\mbox{ \mbox{on }}\mathbb{R}_{x}^{n}\times\mathbb{R}_{u},\]
to obtain heat kernel estimates for this operator. The main work is
to find the shortest geodesics connecting two given points in $\mathbb{R}^{n+1}$.
This gives the Carnot-Carathéodory distance $d_{CC}$, associated
to this operator. The main result in the second part is to give Gaussian
bounds for the heat kernel $K_{t}$ in terms of the Carnot-Carathéodory
distance. In particular we obtain the following estimate\[
\left|k_{t}(\zeta,\eta)\right|\leq C\, t^{-\frac{n}{2}-1}\min\left(1+\frac{d_{CC}(\zeta,\eta)}{\left|x+\xi\right|},1+\frac{d_{CC}(\zeta,\eta)^{2}}{4t}\right)^{\alpha}e^{-\frac{1}{4t}d_{CC}(\zeta,\eta)^{2}}\]
for all $\zeta=(x,u_{1}),\;\eta=(\xi,u)\in\mathbb{R}^{n+1}$, where
$\alpha=\max\left\{ \frac{n}{2}-1,0\right\} $. Here the homogeneous
dimension is $q=n+2$, so that $\frac{n}{2}-1=\frac{q-4}{2}$. This
shows that our result for $n\geq2$ corresponds with the result on
the Heisenberg group, which was given by Beals, Gaveau, Greiner in
\cite{Beals2000}.
\end{abstract}
\maketitle
\begin{acknowledgement*}
I would like to express my gratitude to my advisor Professor Dr. Detlef
Müller for constant support and numberless helpful suggestions.
\end{acknowledgement*}

\section{Introduction}

The purpose of this article is to study the geometric properties of
the Grušin operator \[
G=\Delta_{x}+|x|^{2}\partial_{u}^{2}\quad\mbox{on}\:\mathbb{R}_{x}^{n}\times\mathbb{R}_{u}\]
to give estimates for the heat kernel of this operator. One may write
G as\[
G=\sum_{j=1}^{n}\left(X_{j}^{2}+U_{j}^{2}\right),\]
with the smooth vector fields \[
X_{j}:=\partial_{x_{j}},\qquad U_{j}:=x_{j}\partial_{u},\; j=1,\ldots,n.\]
Note that $G$ is hypoelliptic, since $G$ satisfies the Hörmander
condition. We are interested in the Carnot-Carathéodory distance associated
to these vector fields, i.e. the length of a minimizing horizontal
curve. A horizontal curve is a curve, whose tangents are a linear
combination of the vector fields $X_{j},\; U_{j},\; j=1,\ldots,n$.
Depending on the start and end point of the curve, there are a different
number of locally minimizing curves, i.e. geodesics. In addition to
the results of Calin, Chang, Greiner and Kannai in \cite{Calin2005},
we can compare their lengths to give explicit formulas for the Carnot-Carathéodory
distance. The geodesics starting in $0\in\mathbb{R}^{n+1}$ are very
similar to those on the Heisenberg group (see \cite{Beals2000}).
But comparing the lengths of geodesics starting in a generic point
$(x,0)\in\mathbb{R}^{n}\times\mathbb{R}$ gets more complicated. Another
difference occurs when studying curves connecting $(x,0)$ and $(\pm x,u)$,
where on gets two different types of geodesics (see Theorem \eqref{thm:x1=00003Dx}
and \eqref{thm:x1=00003D-x}).

These results on the Carnot-Carathéodory distance allow us to give
some pointwise estimates of the heat kernels. 

Due to the fact that the partial Fourier transform of $G$ in $u$
leads to the rescaled Hermite operators, it is easy to calculate the
heat kernel using Mehler's formula, up to the partial Fourier transform.
One may also observe that $G$ is translation invariant in the $u$-variable.
This allows us to write the heat kernel in the form $K_{t}(x,\xi,u),\; x,\xi\in\mathbb{R}^{n},\; u\in\mathbb{R}$.
So the solution to the heat equation\[
\begin{cases}
\left(\partial_{t}-G\right)u & =0\\
u(0,\cdot)=f\end{cases}\]
is given by\[
u(t,x)=\int_{\mathbb{R}^{n}}\int_{\mathbb{R}}K_{t}(x,\xi,u-\lambda)f(\xi,\lambda)d\lambda d\xi.\]
We are able to give global estimates for the kernel of the form\[
\left|K_{t}(x,\xi,u)\right|\leq F(x,\xi,u,t)e^{-\frac{1}{4t}d_{CC}\left((x,0),(\xi,u)\right)^{2}},\]
where $F$ is a function with polynomial growth. It turns out that\[
F(x,\xi,u,t)\leq C\, t^{-\frac{n}{2}-1}\min\left(1+\frac{d_{CC}(\zeta,\eta)}{\left|x+\xi\right|},1+\frac{d_{CC}(\zeta,\eta)^{2}}{4t}\right)^{\alpha},\]
with $\alpha=\max\left\{ \frac{n}{2}-1,0\right\} $ and $C>0$ is
a constant independent of $x,\xi,u,t$. In particular our main result
is the following

\begin{thm*}
For $\zeta=(x,0),\;\eta=(\xi,u)\in\mathbb{R}^{n+1}$ we have\begin{equation}
\left|K_{t}(x,\xi,u)\right|\lesssim t^{-\frac{n}{2}-1}\min\left(1+\frac{d_{CC}(\zeta,\eta)}{\left|x+\xi\right|},1+\frac{d_{CC}(\zeta,\eta)^{2}}{4t}\right)^{\alpha}e^{-\frac{1}{4t}d_{CC}(\zeta,\eta)^{2}},\end{equation}
with $\alpha=\max\left(\frac{n}{2}-1,0\right)$.
\end{thm*}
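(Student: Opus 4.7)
The plan is to derive an explicit integral representation of $K_t$ from the partial Fourier transform in $u$ and then analyze it by a complex saddle-point method tied to the geodesic description from the first part of the paper. Since $G$ is translation invariant in $u$, taking the partial Fourier transform reduces $G$ at frequency $\tau$ to the rescaled Hermite operator $\Delta_x - \tau^2|x|^2$, whose heat kernel is given explicitly by Mehler's formula. Inverting the Fourier transform and rescaling $\tau \mapsto \tau/(2t)$ yields, up to harmless constants, a representation of the form
\[
K_t(x,\xi,u) \;=\; \frac{C_n}{t^{n/2+1}} \int_{\mathbb{R}} \Bigl(\frac{\tau}{\sinh\tau}\Bigr)^{n/2} \exp\!\Bigl(-\frac{\Phi(\tau; x,\xi,u)}{4t}\Bigr)\, d\tau,
\]
with phase
\[
\Phi(\tau; x,\xi,u) \;=\; \tau\coth\tau\,(|x|^2+|\xi|^2) \;-\; \frac{2\tau\, x\cdot\xi}{\sinh\tau} \;-\; 2iu\tau,
\]
the direct analogue of the formula exploited by Beals--Gaveau--Greiner on the Heisenberg group.

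The first step is to identify the relevant saddle point $\tau_c$ of $\Phi$ with the parameter of the minimizing geodesic from $(x,0)$ to $(\xi,u)$ classified in the first part of the paper, so that $\Phi(\tau_c) = d_{CC}(\zeta,\eta)^2$. Deforming the integration contour onto a path of steepest descent through $\tau_c$ then factors out $e^{-d_{CC}(\zeta,\eta)^2/(4t)}$ and reduces matters to a Laplace-type integral in which $\mathrm{Re}\,\Phi - d_{CC}^2$ is non-negative along the contour and vanishes quadratically at $\tau_c$. This is precisely the mechanism by which the exponential in the theorem appears, and the geodesic results cited in the Introduction are what certify that the saddle value equals $d_{CC}^2$.

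The second step produces the polynomial prefactor and, in particular, the minimum of two terms. A direct computation of $\Phi''(\tau_c)$ shows it is of order $|x+\xi|^2$. Hence in the regime where $|x+\xi|$ is sizeable compared with $\sqrt{t}$, ordinary Morse-lemma stationary phase gives a prefactor of size $t^{-n/2-1}(1 + d_{CC}/|x+\xi|)^{\alpha}$, with the $\alpha$-power coming from the amplitude $(\tau_c/\sinh\tau_c)^{n/2}$ evaluated at the saddle. In the opposite regime, $\tau_c$ is pushed toward the singularity $\tau = i\pi$ of $1/\sinh\tau$, the Hessian degenerates, and a direct estimate of $(\tau/\sinh\tau)^{n/2}$ in terms of the saddle parameter yields instead the bound $(1 + d_{CC}^2/(4t))^{\alpha}$. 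Taking the minimum of the two bounds, each sharp in its own regime, yields the stated inequality.

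The hard part will be the uniform control across the transition between the two regimes, where the saddle becomes borderline degenerate. I plan to handle this by a Morse-type change of variable adapted to the limit $|x+\xi|\to 0$, splitting the integral into a principal part near $\tau_c$, estimated by a degenerate stationary-phase argument that interpolates between the two prefactors, and an exponentially small tail controlled by the strict positivity of $\mathrm{Re}\,\Phi - d_{CC}^2$ away from the saddle. Keeping careful track of constants so that the two natural estimates combine into a genuine minimum, rather than merely a sum, and ensuring that the analysis remains uniform in the small $n\in\{1\}$ case where $\alpha=0$, is where the main technical work lies; the overall $t^{-n/2-1}$ scaling then drops out of the prefactor of the integral representation itself.
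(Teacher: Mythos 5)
Your integral representation and the identification of the phase's value at the saddle $\tau=ib_0$ with $d_{CC}(\zeta,\eta)^2$ are correct, and they match the paper's setup in Section~3. But the analytic route you propose --- a genuine steepest-descent/Morse-lemma analysis through the saddle, with a degenerate stationary-phase argument to interpolate between regimes --- is substantially different from, and considerably harder than, what the paper actually does. The paper never performs a Laplace asymptotic expansion at all. Instead it shifts the contour to the \emph{horizontal line} $\Im\lambda = b$ for an appropriately chosen $b\in(-\pi,\pi)$, and then applies the plain triangle inequality along that line. The exponential factor $e^{-d_{CC}^2/(4t)}$ comes out because the paper establishes the pointwise inequality $\Re\psi(\nu+ib)\geq\psi(ib)$ for all $\nu\in\mathbb{R}$ (via the convex-combination decompositions $\psi=(1-a)\lambda\coth\lambda+a\lambda\tanh\tfrac{\lambda}{2}$, resp.\ $\psi=(1+a)\lambda\coth\lambda-a\lambda\coth\tfrac{\lambda}{2}$), so the exponential is controlled uniformly along the contour rather than only locally near the saddle. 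The polynomial prefactor $(b/\sin b)^\alpha$ then arises simply from $\int_{\mathbb{R}}|V(\lambda+ib)|\,d\lambda\lesssim(b/\sin b)^\alpha$, using the two elementary bounds $|V(\nu+ib)|\leq(b/\sin b)^{n/2}$ and $|V(\nu+ib)|\leq(1+b^2/\nu^2)^{n/4}(\nu/\sinh\nu)^{n/2}$. Finally, the two terms inside the minimum do not come from a single interpolating estimate near a degenerate saddle: they are obtained by two \emph{separate} applications of the same Lemma~\ref{lem:main estimate}, once with $b=b_0$ exactly (giving the $1+d_{CC}/|x+\xi|$ bound, since $b_0^2/\sin^2 b_0\lesssim 1+d_{CC}^2/|x+\xi|^2$) and once with $b=b_0-\epsilon$, $\epsilon=1/(1+d_{CC}^2)$ (giving the $1+d_{CC}^2/(4t)$ bound by pulling $b$ slightly off the singularity at $\pi$), and then taking the trivial minimum of the two resulting inequalities.

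The concrete gap in your proposal is precisely the part you flag yourself: the ``uniform control across the transition'' as $|x+\xi|\to 0$ and $\tau_c\to i\pi$, where the Hessian degenerates. You have not actually carried out the degenerate stationary-phase argument, nor verified the claimed scaling $\Phi''(\tau_c)\sim|x+\xi|^2$, nor shown how the two prefactors combine into a genuine minimum rather than a sum. These are not minor bookkeeping issues: a rigorous degenerate Morse-lemma argument uniform in the coalescence parameter is real work, and it is unnecessary here. The lesson from the paper's proof is that one does not need an asymptotic expansion --- only an upper bound --- so the whole degenerate-saddle machinery can be bypassed by the horizontal contour shift and the pointwise monotonicity of $\Re\psi$ on that contour. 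If you want matching lower bounds or precise asymptotics, your approach would eventually be the right one; for the stated upper bound, it is overkill, and as written it is not a complete proof.
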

For a better understanding of the exponent $\alpha$, consider the
dilation\[
\delta_{r}(x,u):=(rx,r^{2}u),\quad x\in\mathbb{R}^{n},\; u\in\mathbb{R},\]
for $r>0.$ Then $G$ is homogeneous with degree 2, and also the Carnot-Carathéodory
distance is homogeneous with degree 2, i.e.\[
d_{CC}\left(\delta_{r}(\zeta),\delta_{r}(\eta)\right)=r^{2}d_{CC}(\zeta,\eta),\;\zeta,\eta\in\mathbb{R}^{n+1}.\]
In this setting the homogeneous dimension is $q=n+2$, and $\frac{n}{2}-1=\frac{q-4}{2}$,
$\frac{n}{2}+1=\frac{q}{2}$. This shows now that our result exactly
coincides, in the case $n\geq2$, with those on the Heisenberg group
obtained by Beals, Gaveau, Greiner in \cite{Beals2000}.

There are some Gaussian-type estimates for heat kernels with greater
generality. Sikora, for example, shows in \cite{Sikora2004} \[
\left|K_{t}(x,\xi,u)\right|\lesssim t^{-\frac{q}{2}}\left(1+\frac{d_{CC}(\zeta,\eta)}{4t}\right)^{\frac{q-1}{2}}e^{-\frac{d_{CC}(\zeta,\eta)^{2}}{4t}}.\]
The methods that are used are completely different. The explicit formulas
in our situation allow us to give a smaller exponent of the polynomial
factor, i.e. $\frac{q-4}{2}$ instead of $\frac{q-1}{2}$.

\section{The sub-Riemannian geometry associated to the Grušin operator\label{sec:Geometry-introduced-by}}

Let $M$ be a connected $C^{\infty}$-Manifold with smooth real vector
fields $X_{1},\ldots,X_{m}$. For $x\in M$ and $v\in T_{x}M$ define:\begin{equation}
\left\Vert v\right\Vert _{x}^{2}:=\inf\left\{ \sum_{j=1}^{m}\alpha_{j}^{2}:\; v=\sum_{j=1}^{m}\alpha_{j}X_{j}(x)\right\} ,\end{equation}
where we use the convention, that $\inf\varnothing:=\infty$, i.e.
if $v$ is not in the linear span of $X_{1},\ldots,X_{m}$, then we
set $\left\Vert v\right\Vert _{x}^{2}:=\infty$. A horizontal curve
$\gamma:\left[0,1\right]\rightarrow M$ is an absolutely continuous
curve such that $\dot{\gamma}(t)\in\lspan\left\{ X_{1}(\gamma(t)),\ldots,X_{m}(\gamma(t))\right\} $
for almost every $t\in\left[0,1\right]$. We define the length of
a horizontal curve by\begin{equation}
L(\gamma):=\int\limits _{0}^{1}\left\Vert \dot{\gamma}(t)\right\Vert _{\gamma(t)}dt.\end{equation}
Then the Carnot-Carathéodory distance of two points $p,q\in M$ associated
to $X_{1},\ldots X_{m}$ is defined by\begin{equation}
d_{CC}(p,q):=\inf L(\gamma),\end{equation}
where the infimum is taken over all horizontal curves $\gamma$ connecting
$p$ and $q$, i.e. $\gamma(0)=p$, $\gamma(1)=q$.

It is well known by Chow's theorem (see \cite{Chow1939}) that, if
$X_{1,}\ldots,X_{m}$ and their brackets span the tangent space $T_{x}M$
at every point $x$ of $M$, then any two points can be joined by
such a curve, so that $d_{CC}(p,q)<\infty$, for any $p,q\in M$. 

Now let $M=\mathbb{R}^{n+1}$ and \[
X_{j}=\partial_{x_{j}},\qquad U_{j}=x_{j}\partial_{u},\qquad j=1,\ldots,n.\]
Then the Grušin operator $G=\Delta_{x}+\left|x\right|^{2}\partial_{u}^{2}$
reads in terms of these vector fields \[
G=\sum\limits _{j=1}^{n}\left(X_{j}^{2}+U_{j}^{2}\right).\]
The vector fields $X_{1},\ldots,X_{n},U_{1},\ldots,U_{n}$ span the
tangent space everywhere except along the line $\left|x\right|=0.$
But, since $\left[X_{j},U_{j}\right]=\partial_{u}$, $j=1,\ldots,n$,
all conditions are fulfilled to connect any two points by an absolutely
continuous curve with finite length. If $\gamma:\left[0,1\right]\rightarrow\mathbb{R}^{n+1}$
is an absolutely continuous curve, one has\begin{alignat*}{1}
\dot{\gamma} & =\dot{\gamma}_{n+1}\partial_{u}+\sum_{j=1}^{n}\dot{\gamma}_{j}\partial_{x_{j}}\\
 & =\dot{\gamma}_{n+1}\partial_{u}+\sum_{j=1}^{n}\dot{\gamma}_{j}X_{j}\end{alignat*}
almost everywhere. Since the vector fields $U_{j}$, $j=1,\ldots,n$
are not linear independent, we may write $\dot{\gamma}_{n+1}\partial_{u}$
as a linear combination of $U_{1},\ldots,U_{n}$ in different ways.
So\[
\left\Vert \dot{\gamma}(t)\right\Vert _{\gamma(t)}^{2}=\sum_{j=1}^{n}\dot{\gamma}_{j}(t)^{2}+\inf\left\{ \sum_{j=1}^{n}\alpha_{j}^{2}:\;\dot{\gamma}_{n+1}(t)=\sum_{j=1}^{n}\alpha_{j}\gamma_{j}(t)\right\} .\]
To minimize this convex functional, one can use the method of Langrange
multiplier, which gives\[
\left\Vert \dot{\gamma}(t)\right\Vert _{\gamma(t)}^{2}=\frac{\dot{\gamma}_{n+1}(t)^{2}}{\sum_{j=1}^{n}\gamma_{j}(t)^{2}}+\sum_{j=1}^{n}\dot{\gamma}_{j}(t)^{2}.\]
Now let $\gamma=(\gamma_{(1)},\gamma_{(2)})$, with $\gamma_{(1)}=(\gamma_{1},\ldots,\gamma_{n})$,
$\gamma_{(2)}=\gamma_{n+1}$, then\[
\left\Vert \dot{\gamma}(t)\right\Vert _{\gamma(t)}^{2}=\frac{\dot{\gamma}_{(2)}(t)^{2}}{\left|\gamma_{(1)}(t)\right|^{2}}+\left|\dot{\gamma}_{(1)}(t)\right|^{2}.\]
To calculate the Carnot-Carathéodory distance, one can minimize the
{}``energy'' integral \begin{equation}
\int\limits _{0}^{1}\left\Vert \dot{\gamma}(t)\right\Vert _{\gamma(t)}^{2}dt=\int\limits _{0}^{1}\left(\frac{\dot{\gamma}_{(2)}(t)^{2}}{\left|\gamma_{(1)}(t)\right|^{2}}+\left|\dot{\gamma}_{(1)}(t)\right|^{2}\right)dt.\end{equation}
This leads to the Euler-Lagrange equations, which gives us local minimizing
curves, i.e. geodesics:\begin{subequations}\begin{eqnarray}
\ddot{\gamma}_{(1)}+\frac{\dot{\gamma}_{(2)}^{2}}{\left|\gamma_{(1)}\right|^{4}}\gamma_{(1)} & = & 0\\
\frac{d}{dt}\frac{\dot{\gamma}_{(2)}}{\left|\gamma_{(1)}\right|^{2}} & = & 0.\end{eqnarray}
\end{subequations} Solving these equations yields the following result
(see \cite{Greiner2002,Calin2005,Meyer2006}).

All geodesics $\gamma$ starting in $(x_{1},u_{1})\in\mathbb{R}^{n}\times\mathbb{R}$
are given by $\gamma^{b,c}=\left(\gamma_{(1)}^{b,c},\gamma_{(2)}^{b,c}\right):[0,1]\rightarrow\mathbb{R}^{n}\times\mathbb{R}$,
where\begin{subequations}\label{eq:geo1}\begin{eqnarray}
\gamma_{(1)}^{b,c}(t) & = & \frac{c}{b}\sin(bt)+x_{1}\cos(bt)\label{eq:__1}\\
\gamma_{(2)}^{b,c}(t) & = & \frac{\left|c\right|^{2}}{b}\left(\frac{t}{2}-\frac{\sin(2bt)}{4b}\right)+\frac{x_{1}\cdot c}{b}\sin^{2}(bt)\label{eq:__1}\\
 &  & +\left|x_{1}\right|^{2}b\left(\frac{t}{2}+\frac{\sin(2bt)}{4b}\right)+u_{1}\nonumber \end{eqnarray}
\end{subequations}with parameters $b\in\mathbb{R},\; b\neq0,\; c\in\mathbb{R}^{n}$,
and \begin{equation}
\gamma^{0,c}(t)=(ct+x_{1},u_{1})\label{eq:geo2}\end{equation}
which is the limiting case $b\rightarrow0$. 

The length is given by\begin{equation}
L(\gamma^{b,c})=\sqrt{\left|c\right|^{2}+\left|x_{1}\right|^{2}b^{2}}.\label{eq:geo1-length}\end{equation}
To calculate the Carnot-Carathéodory distance, we are interested in
the shortest geodesic joining two given points $(x_{1},u_{1}),(x,u)\in\mathbb{R}^{n}\times\mathbb{R}$. 

In our further analysis we will assume that $u_{1}=0$, since $\gamma$
is a geodesic connecting $(x_{1},0)$, $(x,u)$, if and only if $\gamma+(0,u_{1})$
is a geodesic connecting $(x_{1},u_{1})$, $(x_{1},u+u_{1})$; with
the same length. 

We will also assume that $u\geq0$, since $\gamma_{(1)}^{b,c}=\gamma_{(1)}^{-b,c}$
and $\gamma_{(2)}^{b,c}=-\gamma_{(2)}^{-b,c}$.

Furthermore observe, that given $b\in\mathbb{R}\setminus\pi\mathbb{Z}^{\ast}$
the parameter $c\in\mathbb{R}^{n}$ is uniquely determined by $c=\frac{b}{\sin b}\left(x-x_{1}\cos b\right)$,
due to the boundary conditions. We will denote this geodesic by $\gamma^{b}$
for short.

Given $b\in\pi\mathbb{Z}^{\ast}$ (note that this means $x=\pm x_{1}$),
the parameter $c\in\mathbb{R}^{n}$ is determined by the whole sphere
$\left|c\right|=\sqrt{2bu-\left|x_{1}\right|^{2}b^{2}}$. But any
$c\in\mathbb{R}^{n}$ with $\left|c\right|=\sqrt{2bu-\left|x_{1}\right|^{2}b^{2}}$
will give the same length.

\begin{thm}
Given $(x_{1},0),(x,0)\in\mathbb{R}^{n}\times\mathbb{R}$, the only
geodesic connecting them is given by\begin{equation}
\gamma^{0}(t)=\left(x_{1}+t\left(x-x_{1}\right),0\right)\end{equation}
and its length, which is equal to the Carnot-Carathéodory distance,
is\begin{equation}
L(\gamma^{0})=d_{CC}\left(\left(x_{1},0\right),\left(x,0\right)\right)=|x-x_{1}|.\end{equation}

\end{thm}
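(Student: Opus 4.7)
The plan is to use the classification of all geodesics starting at $(x_1,0)$ quoted above and determine which of them actually reach the endpoint $(x,0)$ at time $t=1$, then match the resulting length with the Carnot--Carath\'eodory distance. The family $\gamma^{0,c}$ is trivial to analyze: the endpoint condition $\gamma^{0,c}(1)=(c+x_1,0)=(x,0)$ forces $c=x-x_1$, producing $\gamma^{0}$ with length $|x-x_1|$.

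The bulk of the work is to rule out all $\gamma^{b,c}$ with $b\neq 0$. For $b\in\pi\mathbb{Z}^{\ast}$ the spherical constraint $|c|^{2}=2bu-|x_1|^{2}b^{2}$ collapses, with $u=0$, to $|c|^{2}=-|x_1|^{2}b^{2}\leq 0$, which forces $c=0$ and $x_1=0$, hence also $x=\pm x_1=0$; this yields only the constant curve at the origin, which coincides with $\gamma^{0}$. For $b\notin\pi\mathbb{Z}^{\ast}$ with $b\neq 0$, $c$ is uniquely determined as $c=\tfrac{b}{\sin b}(x-x_1\cos b)$; I would substitute this into the equation $\gamma_{(2)}^{b,c}(1)=0$, expand $\sin(2b)=2\sin b\cos b$ and use $\sin^{2}b+\cos^{2}b=1$ to reduce the condition, after clearing a factor of $2b^{2}/\sin^{2}b$, to
\[
(|x|^{2}+|x_1|^{2})(b-\sin b\cos b) \;=\; 2(x\cdot x_1)(b\cos b-\sin b).
\]
Setting $A=b-\sin b\cos b$ and $B=b\cos b-\sin b$, a short computation gives the factorization
\[
A^{2}-B^{2} = (1-\cos b)(1+\cos b)(b+\sin b)(b-\sin b) = \sin^{2}b\,(b^{2}-\sin^{2}b),
\]
which is strictly positive for the $b$ under consideration, hence $|A|>|B|$. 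Combined with the Cauchy--Schwarz estimate $|x|^{2}+|x_1|^{2}\geq 2|x\cdot x_1|$, this makes the displayed equation impossible unless $x=x_1=0$, which is the degenerate case already accounted for.

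To identify $d_{CC}((x_1,0),(x,0))$ with $L(\gamma^{0})=|x-x_1|$, I would combine the immediate upper bound from $\gamma^{0}$ with the matching lower bound obtained by discarding the non-negative term $\dot{\gamma}_{(2)}^{2}/|\gamma_{(1)}|^{2}$ under the square root in the length integrand, so that
\[
L(\gamma)=\int_{0}^{1}\sqrt{\tfrac{\dot{\gamma}_{(2)}(t)^{2}}{|\gamma_{(1)}(t)|^{2}}+|\dot{\gamma}_{(1)}(t)|^{2}}\,dt \;\geq\; \int_{0}^{1}|\dot{\gamma}_{(1)}(t)|\,dt\;\geq\;|x-x_1|
\]
for every horizontal curve $\gamma$ from $(x_1,0)$ to $(x,0)$. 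The chief obstacle in this plan is the trigonometric simplification distilling $\gamma_{(2)}^{b,c}(1)=0$ into the displayed polynomial identity on $(x,x_1,b)$; the degenerate slice $b\in\pi\mathbb{Z}^{\ast}$ must be treated separately through the sphere constraint on $c$ since the formula for $c$ is singular there. Once this identity and the factorization of $A^{2}-B^{2}$ are in hand, the impossibility argument is immediate.
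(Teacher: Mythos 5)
Your argument is correct, but it takes a genuinely different route from the paper's. The paper's proof is a three-line observation: the Euler--Lagrange equations give $\dot{\gamma}_{(2)}=b\left|\gamma_{(1)}\right|^{2}$, so $\dot{\gamma}_{(2)}$ has a fixed sign; since $\gamma_{(2)}(0)=\gamma_{(2)}(1)=0$ this forces $\dot{\gamma}_{(2)}\equiv 0$, hence $b=0$. You instead classify by hand: you dispose of the singular slice $b\in\pi\mathbb{Z}^{\ast}$ via the degenerate sphere constraint, and for $b\notin\pi\mathbb{Z}^{\ast}$ you reduce $\gamma_{(2)}^{b,c}(1)=0$ to
\[
\bigl(|x|^{2}+|x_1|^{2}\bigr)\underbrace{(b-\sin b\cos b)}_{A}=2(x\cdot x_{1})\underbrace{(b\cos b-\sin b)}_{B},
\]
and then observe $A^{2}-B^{2}=\sin^{2}b\,(b^{2}-\sin^{2}b)>0$, which together with $|x|^{2}+|x_1|^{2}\geq 2|x\cdot x_1|$ makes the identity impossible unless $x=x_1=0$. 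Both arguments work; the paper's is considerably slicker but rests on a sign observation, whereas yours is a direct elimination that, as a byproduct, reproves that $\mu(b)>0$ for $b\in(0,\infty)\setminus\pi\mathbb{N}$ (equivalently that the right side of \eqref{eq:mu} vanishes only at $b=0$), a fact the paper gets only via the global monotonicity analysis of $\tilde\mu$, $\hat\mu$ later in Section~2. You also supply an explicit lower bound $L(\gamma)\geq\int_{0}^{1}|\dot\gamma_{(1)}|\,dt\geq|x-x_1|$ to identify the infimum with $|x-x_1|$, a step the paper leaves implicit; that addition is welcome. The one place your write-up is slightly informal is the intermediate trigonometric reduction of $\gamma_{(2)}^{b,c}(1)=0$ to the displayed polynomial identity (equivalently, $\sin^{2}b\cdot\mu(b)=0$ with $a=2x\cdot x_1/(|x|^2+|x_1|^2)$); you should make sure that step is spelled out, since Lemma~\ref{lem:allg geo gln} is stated under the hypothesis $u>0$, $x\neq\pm x_1$, so you cannot simply cite it but must redo the boundary-condition algebra for $u=0$.
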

\begin{proof}
We have to prove that $b=0$ is the only parameter that allows connecting
$(x_{1},0)$ and $(x,0)$. But from the Euler-Lagrange equation, we
know that $\dot{\gamma}_{(2)}=b\left|\gamma_{(1)}\right|^{2}.$ That
means $\dot{\gamma}_{(2)}(t)\geq0$ or $\dot{\gamma}_{(2)}(t)\leq0$
for all $t\geq0$. But since we want $\gamma_{(2)}(0)=0$ and $\gamma_{(2)}(1)=0$,
we conclude that $\dot{\gamma}_{(2)}=0$, so that $b=0$, hence $c=x-x_{1}$.
\end{proof}
\begin{thm}
Given $u>0$, there are infinitely many geodesics $\gamma^{m\pi,c_{m}},\; m\in\mathbb{N}$,
connecting $(0,0)\in\mathbb{R}^{n}\times\mathbb{R}$ and $(0,u)\in\mathbb{R}^{n}\times\mathbb{R}$
given by \eqref{eq:geo1} with $c_{m}\in\mathbb{R}^{n},$ $\left|c_{m}\right|=\sqrt{2m\pi u}$.
Their lengths are\begin{equation}
L(\gamma^{m\pi,c_{m}})=\sqrt{2m\pi u}.\end{equation}
In particular the shortest geodesics in this case, which give the
Carnot-Carathéodory distance, are $\gamma^{\pi,c_{1}}$, $\left|c_{1}\right|=\sqrt{2\pi u}$;
so\begin{equation}
d_{CC}\left((0,0),(0,u)\right)=\sqrt{2\pi u}.\end{equation}

\end{thm}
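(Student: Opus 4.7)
The plan is to substitute the initial point $(x_{1},u_{1})=(0,0)$ and the terminal condition $(x,u)$ into the explicit geodesic formulas \eqref{eq:geo1} and solve for the admissible parameters $(b,c)$.

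First I would analyze the first component. With $x_{1}=0$, equation \eqref{eq:__1} reduces to $\gamma_{(1)}^{b,c}(t)=\frac{c}{b}\sin(bt)$ (for $b\neq 0$), so the endpoint condition $\gamma_{(1)}^{b,c}(1)=0$ forces either $c=0$ or $\sin b=0$. If $c=0$, the whole curve sits on the singular line $\{|x|=0\}$ and the formula for $\gamma_{(2)}^{b,c}$ collapses to $0$, which contradicts $u>0$. If $b=0$, the limiting geodesic \eqref{eq:geo2} reads $\gamma^{0,c}(t)=(ct,0)$, again incompatible with $u>0$. Hence one is forced into $b=m\pi$ for some nonzero integer $m$.

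Next I would use the second boundary condition to fix $|c|$. Plugging $x_{1}=0$, $u_{1}=0$ and $b=m\pi$ into the formula for $\gamma_{(2)}^{b,c}$ kills the last two terms, and because $\sin(2m\pi)=0$, the first term simplifies to
\begin{equation*}
\gamma_{(2)}^{m\pi,c}(1)=\frac{|c|^{2}}{m\pi}\cdot\frac{1}{2}=\frac{|c|^{2}}{2m\pi}.
\end{equation*}
Setting this equal to $u>0$ gives $|c|^{2}=2m\pi u$, which in turn forces $m\in\mathbb{N}$ (otherwise the right-hand side would be negative). Conversely, any $c\in\mathbb{R}^{n}$ with $|c|=\sqrt{2m\pi u}$ yields a genuine geodesic with the prescribed endpoints; the direction of $c$ on the sphere is free, which is consistent with the remark preceding the statement that for $b\in\pi\mathbb{Z}^{\ast}$ the parameter $c$ is only constrained in modulus.

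For the length, inserting $x_{1}=0$ into \eqref{eq:geo1-length} gives $L(\gamma^{m\pi,c_{m}})=|c_{m}|=\sqrt{2m\pi u}$, and this is clearly minimized at $m=1$, yielding $d_{CC}\bigl((0,0),(0,u)\bigr)=\sqrt{2\pi u}$. I do not anticipate any real obstacle; the only subtlety is bookkeeping the two degenerate cases $b=0$ and $c=0$, which must both be excluded before concluding that $b$ must be a nonzero integer multiple of $\pi$.
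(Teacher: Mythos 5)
Your proof is correct and follows essentially the same route as the paper: use the first boundary condition $\gamma_{(1)}^{b,c}(1)=0$ to force $c=0$ or $\sin b=0$, rule out $c=0$ (and $b=0$) via $u>0$, and then read $|c|^2=2m\pi u$ off the second boundary condition, with the length formula $L(\gamma^{b,c})=\sqrt{|c|^2+|x_1|^2b^2}$ giving $\sqrt{2m\pi u}$. You are a bit more explicit about the degenerate cases $b=0$ and $c=0$ than the paper, which the paper disposes of tersely, but the argument is the same.
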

\begin{proof}
The boundary conditions $\gamma_{(1)}^{b,c}(1)=0$ and $\gamma_{(2)}^{b,c}(1)=u$
are equivalent to\[
0=\frac{c}{b}\sin b\quad\mbox{and}\quad u=\frac{\left|c\right|^{2}}{2b}\left(1-\frac{\sin(2b)}{2b}\right).\]
Since $u\neq0$, we have $c\neq0$. But then there exists $m\in\mathbb{N}$
with $b=m\pi$, so that $\left|c\right|^{2}=2m\pi u$. This gives
the claim.
\end{proof}
\begin{figure}[h] %fig soll wenn möglich hier
\label{fig:geo00}
\centering
\mypsteps{\includegraphics{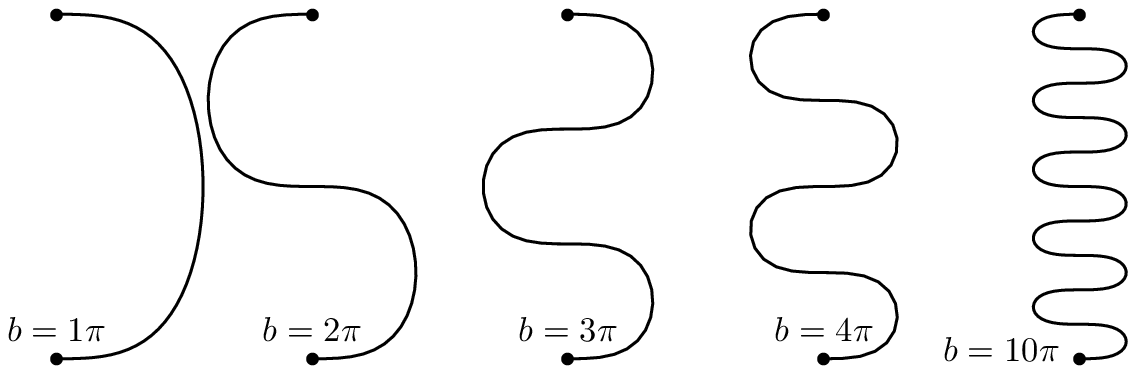}}{\psset{unit=1,algebraic=true}
\pspicture*(-.5,-.3)(11,4)
%(0,0)->(0,u)
\def\opi{3.14159}
\def\geo#1#2{(2*#1/#2/\opi)^0.5*sin(#2*\opi*t)|#1*(t-sin(2*#2*\opi*t)/2/#2/\opi)}%param u,b/\opi
\multido{\ib=1+1}{4}{%
\parametricplot{0}{1}{2.6*\ib-2.6+\geo{3.5}\ib}
\FPmul\nxx\ib{2.6}\FPsub\nx\nxx{2.6}
\psdots(\nx,0)(\nx,3.5)
\rput(\nx,.3){$b=\ib\pi$}}
\parametricplot[plotpoints=200]{0}{1}{10.4+\geo{3.5}{10}}
\psdots(10.4,0)(10.4,3.5)
\rput(9.6,.1){$b=10\pi$}
\endpspicture}
\caption{Geodesics starting at $(0,0)$ and ending in $(0,u)$ with different parameters.}
\end{figure}

Now we consider the case where we want to connect $(x_{1},0)$ and
$(x,u)$ with $x\neq\pm x_{1}$, $u>0$. The first lemma is just a
rewritten formulation of the boundary conditions $\gamma^{b,c}(0)=(x_{1},0)\;\mbox{and}\;\gamma^{b,c}(1)=(x,u)$.

\begin{lem}
\label{lem:allg geo gln}Suppose $(x_{1},0),(x,u)\in\mathbb{R}^{n}\times\mathbb{R}$,
$x\neq\pm x_{1},$ $u>0$. Then $\gamma^{b,c}$ given by \eqref{eq:geo1}
is a geodesic connecting these two points, iff \[
c=\frac{b}{\sin b}x-x_{1}b\cot b\]
 and $b$ is any solution of\begin{equation}
\frac{2u}{\left|x_{1}\right|^{2}+\left|x\right|^{2}}=\frac{b}{\sin^{2}b}-\cot b+\frac{2x\cdot x_{1}}{\left|x_{1}\right|^{2}+\left|x\right|^{2}}\frac{1-b\cot b}{\sin b}.\label{eq:mu}\end{equation}
The square of the length of $\gamma^{b}=\gamma^{b,c}$ is then given
by\begin{subequations}\label{eq:length}\begin{eqnarray}
L^{2}(\gamma^{b,c}) & = & \frac{b^{2}}{\sin^{2}b}\left(\left|x_{1}\right|^{2}+\left|x\right|^{2}-2x\cdot x_{1}\cos b\right)\label{eq:__1}\\
 & = & 2bu+\left(\left|x_{1}\right|^{2}+\left|x\right|^{2}\right)b\cot b-2x\cdot x_{1}\frac{b}{\sin b}.\label{eq:__1}\end{eqnarray}
\end{subequations}
\end{lem}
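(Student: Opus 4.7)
The plan is to work directly from the explicit formulas \eqref{eq:geo1} for $\gamma^{b,c}$ and impose the boundary conditions $\gamma^{b,c}(0)=(x_1,0)$, $\gamma^{b,c}(1)=(x,u)$. The formula is already designed so that $\gamma^{b,c}(0)=(x_1,0)$, so only the endpoint condition at $t=1$ matters.

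First I would use $\gamma_{(1)}^{b,c}(1)=x$. This gives $\frac{c}{b}\sin b + x_1\cos b = x$, which immediately yields
\[
c=\frac{b}{\sin b}\bigl(x-x_1\cos b\bigr)=\frac{b}{\sin b}x - x_1 b\cot b,
\]
using $b\notin\pi\mathbb{Z}$ (which is allowed since $x\neq\pm x_1$ forces $\sin b\neq 0$, as already pointed out in the text before the lemma). Next I would prove the first form of the length formula \eqref{eq:length}a directly from \eqref{eq:geo1-length}: with the $c$ just found,
\[
|c|^{2}=\frac{b^{2}}{\sin^{2}b}\bigl(|x|^{2}-2x\cdot x_{1}\cos b+|x_{1}|^{2}\cos^{2}b\bigr),
\]
so adding $|x_1|^2 b^2$ and using $\cos^2 b+\sin^2 b=1$ gives
\[
L^{2}(\gamma^{b,c})=|c|^{2}+|x_1|^2 b^2=\frac{b^{2}}{\sin^{2}b}\bigl(|x_{1}|^{2}+|x|^{2}-2x\cdot x_{1}\cos b\bigr),
\]
which is \eqref{eq:length}a.

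The main computational step is imposing $\gamma_{(2)}^{b,c}(1)=u$. I would substitute the expressions for $|c|^{2}$ and $x_{1}\cdot c=\frac{b}{\sin b}(x\cdot x_{1}-|x_1|^{2}\cos b)$ into
\[
u=\frac{|c|^{2}}{b}\Bigl(\tfrac{1}{2}-\tfrac{\sin 2b}{4b}\Bigr)+\frac{x_{1}\cdot c}{b}\sin^{2}b+|x_{1}|^{2}b\Bigl(\tfrac{1}{2}+\tfrac{\sin 2b}{4b}\Bigr),
\]
use the identity $\sin 2b=2\sin b\cos b$, collect the coefficients of $|x_1|^{2}+|x|^{2}$ and of $x\cdot x_{1}$ separately, and simplify. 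This tedious but mechanical step produces \eqref{eq:mu}; it is the main (really the only) obstacle of the proof.

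Finally, to obtain the second form \eqref{eq:length}b, I would multiply \eqref{eq:mu} through by $b\bigl(|x_1|^{2}+|x|^{2}\bigr)$ to get
\[
2bu=(|x_{1}|^{2}+|x|^{2})\Bigl(\frac{b^{2}}{\sin^{2}b}-b\cot b\Bigr)+2x\cdot x_{1}\,\frac{b-b^{2}\cot b}{\sin b}.
\]
Adding $(|x_{1}|^{2}+|x|^{2})b\cot b-2x\cdot x_{1}\,\frac{b}{\sin b}$ to both sides, the cotangent terms cancel and the $x\cdot x_{1}$ contribution collapses via $-b^{2}\cot b/\sin b=-b^{2}\cos b/\sin^{2}b$ to reproduce \eqref{eq:length}a. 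Hence \eqref{eq:length}b equals \eqref{eq:length}a on the constraint set, completing the proof.
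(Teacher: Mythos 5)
Your proposal is correct and follows essentially the same route as the paper: impose the endpoint conditions on \eqref{eq:geo1} and simplify algebraically. You obtain \eqref{eq:length}a directly from $L^2=|c|^2+|x_1|^2b^2$ and then recover \eqref{eq:length}b as an algebraic consequence of \eqref{eq:length}a and \eqref{eq:mu}, whereas the paper derives both forms of the length independently and arrives at \eqref{eq:mu} by a chain of back-substitutions keeping $c$ as a variable; these are minor organizational differences, not a different method. The one thing to flag is that you defer the central computation (collecting coefficients of $|x|^2+|x_1|^2$ and of $x\cdot x_1$ after substituting $|c|^2$ and $x_1\cdot c$ into the $\gamma_{(2)}(1)=u$ condition) with the remark that it is ``tedious but mechanical.'' It does indeed work out — the coefficient of $|x|^2$ and of $|x_1|^2$ both reduce to $\tfrac12\bigl(\tfrac{b}{\sin^2 b}-\cot b\bigr)$ and the coefficient of $x\cdot x_1$ reduces to $\tfrac{1-b\cot b}{\sin b}$ — but that verification is the substance of the lemma and should be written out rather than asserted.
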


\begin{proof}
To find those geodesics connecting two points $(x_{1},0)$ and $(x,u)$,
we have to find $b\neq0$ and $c\in\mathbb{R}^{n}$, such that $\gamma_{(1)}^{b,c}(1)=x$
and $\gamma_{(2)}^{b,c}(1)=u$. Since $b\in\pi\mathbb{Z}$ means $x=\pm x_{1}$,
we have that $b\not\in\pi\mathbb{Z}$. Now $\gamma_{(2)}^{b,c}(1)=u$
gives:\begin{alignat}{1}
2u & =\frac{\left|c\right|^{2}}{b}\left(1-\frac{\sin(2b)}{2b}\right)+2\frac{x_{1}\cdot c}{b}\sin^{2}(b)+\left|x_{1}\right|^{2}b\left(1+\frac{\sin(2b)}{2b}\right)\nonumber \\
 & =\frac{1}{b}\left(\left|c\right|^{2}+\left|x_{1}\right|^{2}b^{2}\right)-\frac{\sin(2b)}{2b^{2}}\left(\left|c\right|^{2}-\left|x_{1}\right|^{2}b^{2}\right)+2\frac{x_{1}\cdot c}{b}\sin^{2}b\label{eq:lem1e0}\end{alignat}
And $\gamma_{(1)}^{b,c}(1)=x$ gives\begin{equation}
\frac{b}{\sin b}x=c+x_{1}b\cot b,\label{eq:lem1e1}\end{equation}
so that\begin{eqnarray}
\frac{b^{2}}{\sin^{2}b}\left|x\right|^{2} & = & \left|c\right|^{2}+2c\cdot x_{1}b\cot b+\left|x_{1}\right|^{2}b^{2}\cot^{2}b\nonumber \\
 & = & \left|c\right|^{2}+2c\cdot x_{1}b\cot b-\left|x_{1}\right|^{2}b^{2}+\frac{\left|x_{1}\right|^{2}b^{2}}{\sin^{2}b}\label{eq:lem1e2}\end{eqnarray}
and\begin{alignat}{1}
\left|c\right|^{2} & =\frac{b^{2}}{\sin^{2}b}\left|x\right|^{2}-2x_{1}\cdot x\frac{b^{2}\cot b}{\sin b}+\left|x_{1}\right|^{2}b^{2}\cot^{2}b\nonumber \\
 & =\frac{b^{2}}{\sin^{2}b}\left(\left|x\right|^{2}+\left|x_{1}\right|^{2}\right)-2x_{1}\cdot x\frac{b^{2}\cot b}{\sin b}-\left|x_{1}\right|^{2}b^{2}\nonumber \\
 & =\frac{b^{2}}{\sin^{2}b}\left(\left|x\right|^{2}+\left|x_{1}\right|^{2}-2x\cdot x_{1}\cos b\right)-\left|x_{1}\right|^{2}b^{2}.\label{eq:lem1e3}\end{alignat}
We can continue with \eqref{eq:lem1e0} by using equation \eqref{eq:lem1e2}
\begin{alignat*}{1}
2u & =\frac{1}{b}\left(\left|c\right|^{2}+\left|x_{1}\right|^{2}b^{2}\right)-\cot b\left(\left|x\right|^{2}-\left|x_{1}\right|^{2}\right)+2\frac{c\cdot x_{1}}{b}\cos^{2}b+2\frac{c\cdot x_{1}}{b}\sin^{2}b\\
 & =\frac{1}{b}\left(\left|c\right|^{2}+\left|x_{1}\right|^{2}b^{2}\right)-\cot b\left(\left|x\right|^{2}-\left|x_{1}\right|^{2}\right)+2\frac{c\cdot x_{1}}{b}\\
 & =\frac{1}{b}\left(\left|c\right|^{2}+\left|x_{1}\right|^{2}b^{2}\right)-\cot b\left(\left|x\right|^{2}+\left|x_{1}\right|^{2}\right)+2\frac{c\cdot x_{1}+\left|x_{1}\right|^{2}b\cot b}{b}.\end{alignat*}
Now use equation \eqref{eq:lem1e1} and \eqref{eq:lem1e3} \begin{alignat*}{1}
2u & =\frac{1}{b}\left(\left|c\right|^{2}+\left|x_{1}\right|^{2}b^{2}\right)-\cot b\left(\left|x\right|^{2}+\left|x_{1}\right|^{2}\right)+2\frac{x\cdot x_{1}}{\sin b}\\
 & =\left(\frac{b}{\sin^{2}b}-\cot b\right)\left(\left|x\right|^{2}+\left|x_{1}\right|^{2}\right)+2x\cdot x_{1}\frac{1-b\cot b}{\sin b}.\end{alignat*}
And the square of the length is given by\begin{alignat*}{1}
L^{2}(\gamma^{b,c}) & =\frac{b^{2}}{\sin^{2}b}\left(\left|x\right|^{2}+\left|x_{1}\right|^{2}-2x\cdot x_{1}\cos b\right)\\
 & =2bu+b\cot b\left(\left|x\right|^{2}+\left|x_{1}\right|^{2}\right)-2\frac{b}{\sin b}x\cdot x_{1}.\end{alignat*}

\end{proof}
\begin{defn*}
For $b\in\mathbb{R}\setminus\pi\mathbb{Z}^{\ast},$ $-1\leq a\leq1$
define:\begin{eqnarray}
\mu(b) & := & \mu(b,a):=\frac{b}{\sin^{2}b}-\cot b+a\frac{1-b\cot b}{\sin b}\\
l(b) & := & l(b,a):=\frac{b^{2}}{\sin^{2}b}\left(1-a\cos b\right).\end{eqnarray}

\end{defn*}
In the following we will often use the abbreviation \[
R=\sqrt{\left|x\right|^{2}+\left|x_{1}\right|^{2}}\mbox{\;\mbox{and}\;}a=\frac{2x_{1}\cdot x}{\left|x_{1}\right|^{2}+\left|x\right|^{2}}.\]

Next we will study the functions $\mu$ and $\tilde{\mu},\,\hat{\mu}$,
which will be introduced in the next lemma. (Note that $\hat{\mu}$
in this section does not mean Fourier transform!)

\begin{lem}
\label{lem:convex comb mu}For $b\in\mathbb{R}\setminus\pi\mathbb{Z}^{\ast}$,
$0\leq a\leq1$, $\mu$ is a convex combination, i.e.\begin{equation}
\mu(b)=(1-a)\tilde{\mu}+a\hat{\mu}\left(\frac{b}{2}\right),\end{equation}
and for $-1\leq a<0$:\begin{equation}
\mu(b)=(1+a)\tilde{\mu}(b)-a\tilde{\mu}\left(\frac{b}{2}\right),\end{equation}
where\begin{eqnarray}
\tilde{\mu}(b) & := & \frac{b}{\sin^{2}b}-\cot b,\; b\in\mathbb{R}\setminus\pi\mathbb{Z}^{\ast},\\
\hat{\mu}(b) & := & \frac{b}{\cos^{2}b}+\tan b,\; b\in\mathbb{R}\setminus\left\{ \left(k+\frac{1}{2}\right)\pi:\; k\in\mathbb{Z}\right\} .\end{eqnarray}

\end{lem}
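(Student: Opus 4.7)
The plan is to observe that $\mu(b,a)$ is affine in $a$, and that each of the two claimed identities is also affine in $a$, so each reduces to a single algebraic statement in $b$ alone. Concretely, directly from the definition one has
\[
\mu(b,a) - \tilde\mu(b) \;=\; a\cdot \frac{1 - b\cot b}{\sin b}.
\]
Since the two proposed decompositions both give $\tilde\mu(b)$ at $a=0$, the first claim is equivalent to
\[
\hat\mu\!\left(\tfrac{b}{2}\right) - \tilde\mu(b) \;=\; \frac{1 - b\cot b}{\sin b},
\]
and the second (for $-1\le a<0$) is equivalent to
\[
\tilde\mu(b) - \tilde\mu\!\left(\tfrac{b}{2}\right) \;=\; \frac{1 - b\cot b}{\sin b}.
\]
Thus the whole lemma collapses to verifying these two trig identities, plus a remark on domains: $b\in\mathbb{R}\setminus\pi\mathbb{Z}^{\ast}$ implies $b\notin 2\pi\mathbb{Z}^{\ast}$ and $b\notin(2\mathbb{Z}+1)\pi$, so $\tilde\mu(b/2)$ and $\hat\mu(b/2)$ are both well defined wherever we need them.

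To prove the two identities I would apply the half-angle formulas $2\cos^{2}(b/2)=1+\cos b$, $2\sin^{2}(b/2)=1-\cos b$, and $\sin b=2\sin(b/2)\cos(b/2)$ to rewrite
\[
\hat\mu(b/2) \;=\; \frac{b}{1+\cos b} + \frac{1-\cos b}{\sin b},
\qquad
\tilde\mu(b/2) \;=\; \frac{b}{1-\cos b} - \frac{1+\cos b}{\sin b},
\]
while $\tilde\mu(b) = b/\sin^{2}b - \cos b/\sin b$. Using $\sin^{2}b=(1-\cos b)(1+\cos b)$, the two $b$-over-polynomial terms in each identity combine to $-b\cos b/\sin^{2}b$, and the remaining $1/\sin b$-terms combine to $1/\sin b$; both expressions thus reduce to $(\sin b - b\cos b)/\sin^{2}b = (1-b\cot b)/\sin b$, as required.

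The main obstacle is not conceptual but notational: making sure the half-angle substitutions are applied consistently and that signs in the $-a$ factor of the second formula line up so that the identity one needs really is $\tilde\mu(b)-\tilde\mu(b/2)$ rather than its negative. A useful consistency check that I would mention in passing is that adding the two identities yields
\[
\hat\mu(b/2) + \tilde\mu(b/2) \;=\; 2\,\tilde\mu(b),
\]
i.e.\ $\tilde\mu(b)$ is the arithmetic mean of $\hat\mu(b/2)$ and $\tilde\mu(b/2)$; this can be verified directly from the half-angle formulas and gives a compact way to remember (and re-derive) the lemma.
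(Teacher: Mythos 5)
Your proposal is correct and follows essentially the same approach as the paper: both observe that $\mu(b,a)=\tilde\mu(b)+a\frac{1-b\cot b}{\sin b}$ is affine in $a$ and reduce each decomposition to a single half-angle identity, which the paper verifies by factoring $2\sin^2(b/2)$ or $2\cos^2(b/2)$ out of the numerator while you verify it by combining over the common denominator $\sin^2 b$. Your closing observation that $\tilde\mu(b)=\tfrac12\bigl(\hat\mu(b/2)+\tilde\mu(b/2)\bigr)$ is a nice mnemonic not made explicit in the paper.
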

\begin{proof}
For $b\in\mathbb{R}\setminus\pi\mathbb{Z}^{\ast}$ :\begin{align*}
\mu(b) & =(1-a)\tilde{\mu}(b)+a\frac{1-b\cot b+\frac{b}{\sin b}-\cos b}{\sin b}\\
 & =(1-a)\tilde{\mu}(b)+2a\frac{\sin^{2}\frac{b}{2}\left(1+\frac{b}{\sin b}\right)}{\sin b}\\
 & =(1-a)\tilde{\mu}(b)+a\left(\frac{\frac{b}{2}}{\cos^{2}\frac{b}{2}}+\tan\frac{b}{2}\right)\\
 & =(1-a)\tilde{\mu}(b)+a\hat{\mu}\left(\frac{b}{2}\right)\end{align*}
and\begin{align*}
\mu(b) & =(1+a)\tilde{\mu}(b)-a\frac{b\cot b-1+\frac{b}{\sin b}-\cos b}{\sin b}\\
 & =(1+a)\tilde{\mu}(b)-2a\frac{\cos^{2}\frac{b}{2}\left(\frac{b}{\sin b}-1\right)}{\sin b}\\
 & =(1+a)\tilde{\mu}(b)-a\left(\frac{\frac{b}{2}}{\sin^{2}\frac{b}{2}}-\cot\frac{b}{2}\right)\\
 & =(1+a)\tilde{\mu}(b)-a\tilde{\mu}\left(\frac{b}{2}\right)\end{align*}

\end{proof}
\begin{rem*}
Observe that all these functions $\mu,\tilde{\mu},\hat{\mu}$ are
odd. The following lemmata will show that $\tilde{\mu}|_{[0,\infty)}\geq0,\;\hat{\mu}|_{[0,\infty)}\geq0$
and therefore $\mu|_{[0,\infty)}\geq0$. Since we assumed that $u\geq0$,
we only need information of these functions for $b\geq0$.

\begin{figure}[h]
\centering
%\subfigure[{Plotting of \textcolor{gray}{$\tilde\mu(b)$, $\hat\mu(b/2)$} (gray) and $\mu(b)$ (black) with parameter $a=0.7$.}]{
\subfigure[{Plotting of $\tilde\mu(b)$ (gray), $\hat\mu(b/2)$ (gray) and $\mu(b)$ (black) with parameter $a=0.7$.}]{
\mypsteps{\includegraphics{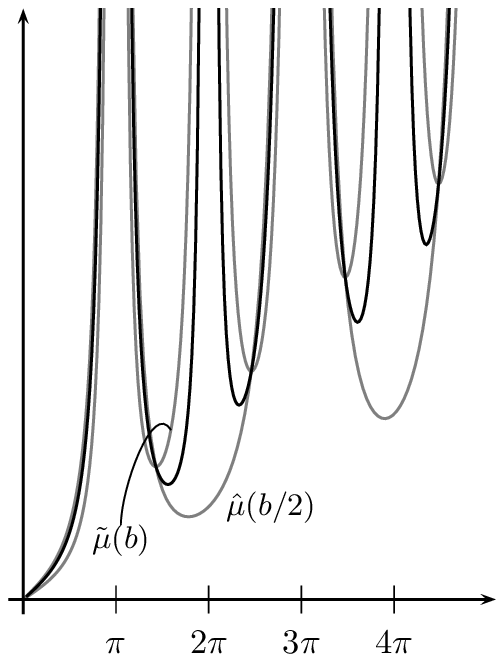}}{}

%%%\psset{xunit=.3,yunit=0.3,algebraic=true,trigLabels=true}
%%%\pspicture*(-.5,-2)(16,20)
%mu(b) sing: k\pi
%%%\multido{\rsi=0.0+3.1416}{5}{
%%%\FPadd\nsil\rsi{0.05}
%%%\FPadd\nsir\rsi{3.13}
%%%\psplot[linewidth=.8pt,linecolor=gray]{\nsil}{\nsir}{x/(sin(x)^2)-cos(x)/sin(x)}}
%hat\mu(b/2) sing:\pi+2k\pi
%%%\psplot[linewidth=.8pt,linecolor=gray]{0.025}{3.13}{x/(2*cos(x/2)^2)+sin(x/2)/cos(x/2)}
%%%\multido{\rsi=3.1416+6.2832}{3}{
%%%\FPadd\nsil\rsi{0.05}
%%%\FPadd\nsir\rsi{6.26}
%%%\psplot[linewidth=.8pt,linecolor=gray]{\nsil}{\nsir}{x/(2*cos(x/2)^2)+sin(x/2)/cos(x/2)}}
%mu
%%%\multido{\rsi=0.0+3.1416}{5}{
%%%\FPadd\nsil\rsi{0.1}
%%%\FPadd\nsir\rsi{3.125}
%%%\psplot[linewidth=.8pt]{\nsil}{\nsir}{(x/(sin(x)^2)-cos(x)/sin(x))+0.7*(1-x*cos(x)/sin(x))/sin(x)}}

%%%\uput[r](6.28,3.14){\small{$\hat\mu(b/2)$}}
%%%\rput(3.3,2){\Rnode{A}{\small{$\tilde\mu(b)$}}}
%%%\pnode(5,5.733){G}
%%%\nccurve[angleA=90,angleB=120,linewidth=.5pt]{A}{G}
%%%\psaxes[labels=x,ticks=x,Dx=1,dx=3.14]{->}(0,0)(-0.5,-0.5)(16,20)
%%%\endpspicture*}
}
\hspace{0.5cm}%
%\subfigure[{Plotting of \textcolor{gray}{$\tilde\mu(b)$, $\tilde\mu(b/2)$} (gray) and $\mu(b)$ (black) with parameter $a=-0.7$.}]{
\subfigure[{Plotting of $\tilde\mu(b)$ (gray), $\tilde\mu(b/2)$ (gray) and $\mu(b)$ (black) with parameter $a=-0.7$.}]{
%\begin{figure}[ht]
\mypsteps{\includegraphics{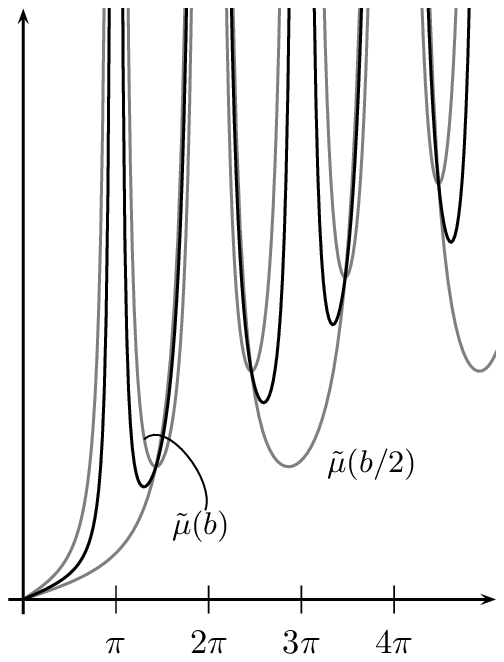}}{%
\psset{xunit=.3,yunit=0.3,algebraic=true,trigLabels=true}
\pspicture*(-.5,-2)(16,20)
%mu
\multido{\rsi=0.0+3.1416}{5}{
\FPadd\nsil\rsi{0.05}
\FPadd\nsir\rsi{3.12}
\psplot[linewidth=.8pt,linecolor=gray]{\nsil}{\nsir}{x/(sin(x)^2)-cos(x)/sin(x)}}
%\tilde\mu b/2
\multido{\rsi=0.0+6.2832}{3}{
\FPadd\nsil\rsi{0.05}
\FPadd\nsir\rsi{6.25}
\psplot[linewidth=.8pt,linecolor=gray]{\nsil}{\nsir}{x/(2*sin(x/2)^2)-cos(x/2)/sin(x/2)}}
\multido{\rsi=0.0+3.1416}{5}{
\FPadd\nsil\rsi{0.05}
\FPadd\nsir\rsi{3.12}
\psplot[linewidth=.8pt]{\nsil}{\nsir}{(x/(sin(x)^2)-cos(x)/sin(x))-0.7*(1-x*cos(x)/sin(x))/sin(x)}}
\uput[r](9.7,4.6){\small{$\tilde\mu(b/2)$}}
\rput(6,2.5){\Rnode{A}{\small{$\tilde\mu(b)$}}}
\pnode(4.1,5.421){G}
\nccurve[angleA=70,angleB=45,linewidth=.5pt]{A}{G}
\psaxes[labels=x,ticks=x,Dx=1,dx=3.14]{->}(0,0)(-0.5,-0.5)(16,20)
\endpspicture
}
}
\end{figure}

\end{rem*}
The function $\tilde{\mu}$, which also appears in the study of geodesics
on the Heisenberg group, where intensively studied by Beals, Gaveau,
Greiner in \cite{Beals2000}. We will use the following result:

\begin{lem}
\label{lem:tilde mu}The function $\tilde{\mu}$ is a monotone increasing
diffeomorphism of the interval $[0,\pi)$ onto $[0,\infty)$. On each
interval $\left(m\pi,(m+1)\pi\right),\, m\in\mathbb{N}$, $\tilde{\mu}$
has a unique critical point $\tilde{b}_{m}$, which is a minimum.
$\tilde{b}_{m}$ is implicitly given as the solution of the equation
$1-b\cot b=0$ in the corresponding interval. On this interval, $\tilde{\mu}$
decreases strictly from $+\infty$ to $\tilde{\mu}(\tilde{b}_{m})$
and then increases strictly to $+\infty$. Moreover,\begin{equation}
\tilde{\mu}(\tilde{b}_{m})+\pi<\tilde{\mu}(\tilde{b}_{m+1}),\end{equation}
and \begin{equation}
\tilde{\mu}(\tilde{b}_{m})>m\pi.\label{eq: mu tilde geq m pi}\end{equation}

\end{lem}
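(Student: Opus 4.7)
The plan is to analyze $\tilde\mu$ via its derivative, which a direct computation gives as
\[
\tilde\mu'(b)=\frac{2(1-b\cot b)}{\sin^{2} b}=\frac{2(\sin b-b\cos b)}{\sin^{3} b}.
\]
On $(0,\pi)$ the denominator $\sin^{3}b$ is positive, while the numerator $\sin b-b\cos b$ vanishes at $0$ and has derivative $b\sin b>0$, so it is strictly positive on $(0,\pi)$. Combining this with the Taylor expansion $\tilde\mu(b)=b+O(b^{3})$ near $0$ and the blow-up $\tilde\mu(b)\to+\infty$ as $b\to\pi^{-}$ gives the diffeomorphism claim.

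On a branch $(m\pi,(m+1)\pi)$ with $m\geq 1$, the critical equation $\tilde\mu'(b)=0$ reduces to $\tan b=b$. Because $(\tan b-b)'=\tan^{2}b\geq 0$, the function $\tan b-b$ is strictly increasing on each pole-free interval $(k\pi-\pi/2,k\pi+\pi/2)$, running from $-\infty$ to $+\infty$. Restricted to $(m\pi,m\pi+\pi/2)$ it runs from $-m\pi$ to $+\infty$ and so has a unique zero $\tilde b_{m}$; on $(m\pi+\pi/2,(m+1)\pi)$ it is negative since $\tan b<0<b$. Checking the sign of $1-b\cot b$ at $b\to m\pi^{+}$ (where $b\cot b\to+\infty$) and at $b=m\pi+\pi/2$ (where it equals $1$) then gives $\tilde\mu'<0$ on $(m\pi,\tilde b_{m})$ and $\tilde\mu'>0$ on $(\tilde b_{m},(m+1)\pi)$, so $\tilde b_{m}$ is the unique minimum. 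Since $\tilde\mu\to+\infty$ at both endpoints, the asserted shape follows.

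For the two inequalities I exploit the closed-form identity $\tilde\mu(\tilde b_{m})=\tilde b_{m}$. From $\tan\tilde b_{m}=\tilde b_{m}$ one has $\cot\tilde b_{m}=1/\tilde b_{m}$ and $\sin^{2}\tilde b_{m}=\tilde b_{m}^{2}/(1+\tilde b_{m}^{2})$; substituting into the definition yields $\tilde\mu(\tilde b_{m})=(1+\tilde b_{m}^{2})/\tilde b_{m}-1/\tilde b_{m}=\tilde b_{m}$. Then $\tilde\mu(\tilde b_{m})=\tilde b_{m}>m\pi$ is immediate from $\tilde b_{m}\in(m\pi,m\pi+\pi/2)$. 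For the gap estimate $\tilde\mu(\tilde b_{m+1})>\tilde\mu(\tilde b_{m})+\pi$, I note that $\tan(\tilde b_{m}+\pi)=\tilde b_{m}<\tilde b_{m}+\pi$, so $\tilde b_{m}+\pi$ lies strictly to the left of the zero $\tilde b_{m+1}$ on the strictly monotone branch of $\tan b-b$ containing $((m+1)\pi,(m+1)\pi+\pi/2)$, giving $\tilde b_{m+1}>\tilde b_{m}+\pi$ and hence the inequality. The main obstacle is nothing deep, only the careful sign bookkeeping when switching between consecutive branches $(m\pi,(m+1)\pi)$; everything else falls out cleanly once the identity $\tilde\mu(\tilde b_{m})=\tilde b_{m}$ is observed.
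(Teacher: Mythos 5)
The paper does not prove this lemma at all: it is quoted from Beals--Gaveau--Greiner \cite{Beals2000} as a known result about the Heisenberg group. Your proof supplies the missing argument, and it is correct. The decisive observation is the closed-form identity $\tilde\mu(\tilde b_m)=\tilde b_m$, obtained by substituting $\cot\tilde b_m=1/\tilde b_m$ and $\sin^2\tilde b_m=\tilde b_m^2/(1+\tilde b_m^2)$; once this is in hand, $\tilde\mu(\tilde b_m)>m\pi$ follows from $\tilde b_m\in(m\pi,m\pi+\pi/2)$, and $\tilde\mu(\tilde b_{m+1})>\tilde\mu(\tilde b_m)+\pi$ reduces to the purely elementary fact $\tilde b_{m+1}>\tilde b_m+\pi$, which you deduce cleanly from the strict monotonicity of $\tan b-b$ on each pole-free branch and the periodicity $\tan(\tilde b_m+\pi)=\tilde b_m$. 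The shape analysis (unique critical point, decrease then increase, $+\infty$ at both endpoints) is also correctly handled via the factorisation $\tilde\mu'(b)=2(\sin b-b\cos b)/\sin^3 b$ and the observation that $\sin b-b\cos b$ has derivative $b\sin b$. One small numerical slip: near $0$ one has $\tilde\mu(b)=\tfrac{2}{3}b+O(b^3)$ rather than $b+O(b^3)$ (equivalently $\tilde\mu'(0)=2/3$, not $1$); this has no effect on the diffeomorphism claim, since all that is needed is $\tilde\mu'(0)>0$ together with $\tilde\mu(0)=0$ and $\tilde\mu\to+\infty$ at $\pi^-$.
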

The function $\hat{\mu}$ has been studied by Calin, Chang, Greiner,
Kannai in \cite{Calin2005}:

\begin{lem}
\label{lem:hat mu}The function $\hat{\mu}$ is a monotone increasing
diffeomorphism of $[0,\frac{\pi}{2})$ onto $[0,\infty)$. On each
interval $\left(m\pi+\frac{\pi}{2},(m+1)\pi+\frac{\pi}{2}\right),\; m\in\mathbb{N}_{0}$,
$\hat{\mu}$ has an unique critical point $\hat{b}_{m}$, which is
a minimum. $\hat{b}_{m}$ is implicitly given as the solution of the
equation $1+b\tan b=0$ in the corresponding interval. On this interval,
$\hat{\mu}$ decreases strictly from $+\infty$ to $\hat{\mu}(\hat{b}_{m})$
and then increases strictly to $+\infty$. Moreover,\begin{equation}
\hat{\mu}(\hat{b}_{m})\geq\pi\left(m+\frac{1}{2}\right).\label{eq:hat mu geq m pi}\end{equation}

\end{lem}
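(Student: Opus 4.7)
My plan is to treat this in close analogy with Lemma \ref{lem:tilde mu}, by analyzing the derivative
\[
\hat{\mu}'(b) \;=\; \frac{2\,(1 + b\tan b)}{\cos^{2} b},
\]
obtained from a short direct computation. The critical points of $\hat{\mu}$ are thus precisely the zeros of $1+b\tan b$. On $[0,\pi/2)$ both $b$ and $\tan b$ are nonnegative, so $\hat{\mu}'>0$ strictly; combined with $\hat{\mu}(0)=0$ and $\hat{\mu}(b)\to+\infty$ as $b\to(\pi/2)^{-}$ (since $b\sec^{2}b$ and $\tan b$ both blow up to $+\infty$), this yields the first claim.

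For $I_{m}=(m\pi+\pi/2,(m+1)\pi+\pi/2)$ I split at $(m+1)\pi$. On the right half $((m+1)\pi,(m+1)\pi+\pi/2)$ one has $\tan b>0$, so $1+b\tan b>1$ and there is no critical point. On the left half $(m\pi+\pi/2,(m+1)\pi)$ I rewrite $1+b\tan b=0$ as $h(b):=b+\cot b=0$ and observe that
\[
h'(b) \;=\; 1-\csc^{2}b \;=\; -\cot^{2}b \;\leq\; 0,
\]
so $h$ is strictly decreasing on that subinterval. Since $h(m\pi+\pi/2)=m\pi+\pi/2>0$ and $h(b)\to-\infty$ as $b\to(m+1)\pi^{-}$, the intermediate value theorem produces a unique zero $\hat{b}_{m}\in(m\pi+\pi/2,(m+1)\pi)$. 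The sign pattern of $1+b\tan b$ then shows $\hat{\mu}'<0$ on $(m\pi+\pi/2,\hat{b}_{m})$ and $\hat{\mu}'>0$ on $(\hat{b}_{m},(m+1)\pi+\pi/2)$, identifying $\hat{b}_{m}$ as the unique minimum; the same asymptotic argument as above forces $\hat{\mu}\to+\infty$ at both endpoints of $I_{m}$.

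For the lower bound \eqref{eq:hat mu geq m pi} I will exploit the defining relation $\tan\hat{b}_{m}=-1/\hat{b}_{m}$, which gives $\sec^{2}\hat{b}_{m}=1+1/\hat{b}_{m}^{2}$. Substituting into $\hat{\mu}$ produces the striking simplification
\[
\hat{\mu}(\hat{b}_{m}) \;=\; \hat{b}_{m}\Bigl(1+\tfrac{1}{\hat{b}_{m}^{2}}\Bigr) - \tfrac{1}{\hat{b}_{m}} \;=\; \hat{b}_{m},
\]
so that, since $\hat{b}_{m}>m\pi+\pi/2=(m+\tfrac{1}{2})\pi$, the (strict) inequality follows at once. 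The only real obstacle is the clean case split at $(m+1)\pi$ needed to localize the critical point in the correct subinterval; once that is in place, the argument reduces to elementary calculus together with the above one-line algebraic identity.
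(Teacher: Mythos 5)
Your proof is correct, and it is worth noting that the paper itself gives no proof of this lemma: it simply cites Calin--Chang--Greiner--Kannai \cite{Calin2005}, so there is no in-paper argument to compare against. Your self-contained derivation is sound: $\hat{\mu}'(b)=2\sec^{2}b\,(1+b\tan b)$, positivity of $1+b\tan b$ on $[0,\pi/2)$ and on $((m+1)\pi,(m+1)\pi+\tfrac{\pi}{2})$, strict monotonicity of $h(b)=b+\cot b$ on the left half $(m\pi+\tfrac{\pi}{2},(m+1)\pi)$ from $h'=-\cot^{2}b$, and the intermediate value theorem to isolate the unique critical point $\hat{b}_{m}$. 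The nicest feature is the observation that $\tan\hat{b}_{m}=-1/\hat{b}_{m}$ forces $\sec^{2}\hat{b}_{m}=1+\hat{b}_{m}^{-2}$ and hence $\hat{\mu}(\hat{b}_{m})=\hat{b}_{m}$; combined with $\hat{b}_{m}>(m+\tfrac{1}{2})\pi$ this gives \eqref{eq:hat mu geq m pi} strictly and with no estimation. Two small points you should spell out: (i) the passage from the sign of $h$ to the sign of $1+b\tan b$ uses $1+b\tan b=(\tan b)\,h(b)$ and that $\tan b<0$ on $(m\pi+\tfrac{\pi}{2},(m+1)\pi)$, so the signs are opposite there (this is what makes $\hat{\mu}'<0$ to the left of $\hat{b}_{m}$ and $\hat{\mu}'>0$ to the right); (ii) the claim $\hat{\mu}\to+\infty$ at the left endpoint of $I_{m}$ is genuinely a $\infty-\infty$ situation (unlike the $b\to(\pi/2)^{-}$ case where both terms blow up with the same sign), but writing $\hat{\mu}(b)=\frac{b+\sin b\cos b}{\cos^{2}b}$ and noting the numerator tends to $m\pi+\tfrac{\pi}{2}>0$ settles it.
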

\begin{rem*}
Moreover $\tilde{\mu}''(b)>0,\; b\not\in\pi\mathbb{N}$ and $\hat{\mu}''(b)>0,\; b-\frac{\pi}{2}\not\in\mathbb{N}_{0}$,
i.e. $\tilde{\mu}$ is strictly convex on each interval $\left(m\pi,(m+1)\pi\right),\, m\in\mathbb{N}$
and $\hat{\mu}$ is strictly convex on each interval $\left(m\pi+\frac{\pi}{2},(m+1)\pi+\frac{\pi}{2}\right),\, m\in\mathbb{N}_{0}$.
This is another result of \cite{Beals2000,Calin2005}. Hence $\mu$,
as a convex combination, is also strictly convex on each interval
$\left(m\pi,(m+1)\pi\right),\, m\in\mathbb{N}$.
\end{rem*}
Combining the previous two lemmata we get the following result for
the function $\mu$:

\begin{lem}
Let $-1<a<1$. The function $\mu$ is monotone increasing on $[0,\pi)$
onto $[0,\infty)$. On each interval $(m\pi,(m+1)\pi),\; m\in\mathbb{N}$,
$\mu$ has an unique critical point $b_{m}$, which is a minimum.
For $a\geq0$ and $m\in\mathbb{N}_{0}$ \begin{eqnarray}
(2m+1)\pi & <\tilde{b}_{2m+1}\leq b_{2m+1}\leq2\hat{b}_{m}< & (2m+2)\pi,\label{eq:b-verteilung-ungerade}\\
(2m+2)\pi & <b_{2m+2}\leq\tilde{b}_{2m+2}< & (2m+3)\pi;\nonumber \end{eqnarray}
and for $a<0$ and $m\in\mathbb{N}$ \begin{eqnarray}
2m\pi & <\tilde{b}_{2m}\leq b_{2m}\leq2\tilde{b}_{m}< & (2m+1)\pi,\label{eq:b-verteilung-gerade}\\
(2m+1)\pi & <b_{2m+1}\leq\tilde{b}_{2m+1}< & (2m+2)\pi.\nonumber \end{eqnarray}
On each interval $(m\pi,(m+1)\pi),\; m\in\mathbb{N}$, $\mu$ decreases
strictly from $+\infty$ to $\mu(b_{m})$ and then increases strictly
to $+\infty$. Moreover for all $m\in\mathbb{N}$,\begin{eqnarray}
\mu(b_{m}) & \geq & \frac{m-1}{2}\pi,\label{eq:mu geq m 2}\\
l(b_{m})-b_{m}\mu(b_{m}) & = & 1-a\delta(b_{m}),\label{eq:delta-equ}\end{eqnarray}
where\begin{equation}
\delta(b):=\cos b+\frac{b}{2}\sin b.\end{equation}

\end{lem}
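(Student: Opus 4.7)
The plan is to exploit the two convex-combination representations of $\mu$ from Lemma~\ref{lem:convex comb mu}, transferring the detailed critical-point structure of $\tilde\mu$ and $\hat\mu$ (Lemmas~\ref{lem:tilde mu},~\ref{lem:hat mu}) to $\mu$. The argument splits according to the sign of $a$: for $a\geq 0$ I use $\mu(b) = (1-a)\tilde\mu(b) + a\hat\mu(b/2)$, and for $a<0$ I use $\mu(b) = (1+a)\tilde\mu(b) + (-a)\tilde\mu(b/2)$. In either case the two summands are nonnegative on $[0,\infty)$, the weights sum to $1$, and both are strictly positive since $|a|<1$, so qualitative features common to both summands transfer to $\mu$.

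The monotone-diffeomorphism claim on $[0,\pi)$ is immediate: each of $\tilde\mu(b)$, $\hat\mu(b/2)$, $\tilde\mu(b/2)$ is a monotone-increasing map of $[0,\pi)$ into $[0,\infty)$, so the same holds for $\mu$, and $\mu(b)\to\infty$ as $b\to\pi^-$ because $\tilde\mu(b)\to\infty$ there. On each $(m\pi,(m+1)\pi)$ with $m\geq 1$, direct inspection of each summand shows $\mu\to+\infty$ at both endpoints, and the Remark yields strict convexity, so $\mu$ has a unique minimum $b_m$ with the claimed strict monotonicity on either side. The core of the proof is the localisation \eqref{eq:b-verteilung-ungerade}--\eqref{eq:b-verteilung-gerade}. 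Differentiating the decompositions gives
\[
\mu'(b) = (1-a)\tilde\mu'(b) + \tfrac{a}{2}\hat\mu'(b/2) \quad(a\geq 0), \qquad \mu'(b) = (1+a)\tilde\mu'(b) - \tfrac{a}{2}\tilde\mu'(b/2) \quad(a<0),
\]
so evaluating $\mu'$ at each candidate point ($\tilde b_{2k+1}$, $2\hat b_k$, $\tilde b_{2k+2}$, $\tilde b_{2k}$, $2\tilde b_k$, $\tilde b_{2k+1}$) kills one summand and reduces the sign analysis to a single derivative. In the representative case $a\geq 0$, $m=2k+1$, one has $\mu'(2\hat b_k) = (1-a)\tilde\mu'(2\hat b_k)$; using $\tilde\mu'(b) = 2(\sin b - b\cos b)/\sin^3 b$ together with the defining relation $\hat b_k\sin\hat b_k + \cos\hat b_k = 0$ from Lemma~\ref{lem:hat mu}, a short computation yields $\sin(2\hat b_k) - 2\hat b_k\cos(2\hat b_k) = (2/\hat b_k)(\cos^2\hat b_k - \hat b_k^2)<0$ and $\sin(2\hat b_k)<0$, hence $\tilde\mu'(2\hat b_k)>0$. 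This simultaneously proves $\tilde b_{2k+1}<2\hat b_k$ (so that $\hat\mu'(\tilde b_{2k+1}/2)<0$ by the structure of $\hat\mu$ around $\hat b_k$) and the full bracket $\tilde b_{2k+1}\leq b_{2k+1}\leq 2\hat b_k$. The remaining three subcases follow the same template; the new ingredient for the $a<0$, even case is the inequality $\tilde b_{2m}\leq 2\tilde b_m$, which I prove via the auxiliary function $f(b):=b\cot b - 1$: since $f'(b) = -\tilde\mu(b)<0$ on $(2m\pi,(2m+1)\pi)$, and, using $\tan\tilde b_m = \tilde b_m$, one computes $f(2\tilde b_m) = -\tilde b_m^2 < 0 = f(\tilde b_{2m})$, monotonicity of $f$ forces $\tilde b_{2m}<2\tilde b_m$.

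With the localisations in hand, the lower bound \eqref{eq:mu geq m 2} is obtained by evaluating the appropriate convex combination at $b_m$ and bounding each summand below by its minimum on the relevant sub-interval, which is $>m\pi$ for $\tilde\mu$ (by \eqref{eq: mu tilde geq m pi}) or $\geq(k+\tfrac12)\pi$ for $\hat\mu$ (by \eqref{eq:hat mu geq m pi}), with $k$ determined by the location of $b_m/2$; each of the four subcases reduces to a linear inequality in $a$ that is immediate from $|a|<1$. Finally, \eqref{eq:delta-equ} is purely algebraic: a direct computation yields the unconditional identity $l(b) - b\mu(b) = b\cot b - ab/\sin b$, and the difference $(l(b)-b\mu(b)) - (1 - a\delta(b))$, multiplied by $2\sin b$, equals $2(b\cos b - \sin b) - a\bigl[b(1+\cos^2 b) - \sin 2b\bigr]$, which is precisely the quantity that the critical-point equation $\mu'(b) = 0$ forces to vanish. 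The hardest steps will be the two cross-inequalities $\tilde b_{2m+1}\leq 2\hat b_m$ and $\tilde b_{2m}\leq 2\tilde b_m$; once these are settled, everything else is routine manipulation with the two decompositions.
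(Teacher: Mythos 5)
Your proposal is correct and follows the same overall strategy as the paper: exploit the convex-combination decompositions of $\mu$ and $\mu'$ in terms of $\tilde\mu$, $\hat\mu$, locate the zeros of $\mu'$ between the known zeros $\tilde b_k$, $\hat b_k$, and reduce the final identity to the critical-point equation for $\mu$. The only deviations are in how you establish the two cross-inequalities and the $\delta$-identity: for $\tilde b_{2m+1}\leq 2\hat b_m$ you compute $\tilde\mu'(2\hat b_k)$ explicitly using $\cos\hat b_k=-\hat b_k\sin\hat b_k$, whereas the paper derives it from the algebraic identity $1-b\cot b=1+\tfrac b2\tan\tfrac b2-\tfrac b2\cot\tfrac b2$ and a sign observation on $((2m+1)\pi,(2m+2)\pi)$; likewise you get $\tilde b_{2m}\leq 2\tilde b_m$ by observing $f(b)=b\cot b-1$ satisfies $f'=-\tilde\mu<0$ and $f(2\tilde b_m)=-\tilde b_m^2<0=f(\tilde b_{2m})$, whereas the paper again uses the additive split of $1-b\cot b$. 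For \eqref{eq:delta-equ} you multiply the candidate identity by $2\sin b$ and recognize the remainder as the numerator of the critical-point equation, while the paper solves $\mu'(b_m)=0$ for $a$ and substitutes; these are the same calculation in a different order. Both versions check out, and the parts you leave as "same template" (the three remaining localisation subcases and the four-case lower bound) are genuinely routine given the decompositions and the orderings $m\pi<\tilde b_m<m\pi+\tfrac\pi2<\hat b_m<(m+1)\pi$; the paper spells them out but uses no new idea.
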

\begin{proof}
It is easy to see that $m\pi<\tilde{b}_{m}<m\pi+\frac{\pi}{2}<\hat{b}_{m}<(m+1)\pi$,
see figure \ref{fig:tancot}.

\begin{figure}[h]
\label{fig:tancot}
\centering
\mypsteps{\includegraphics{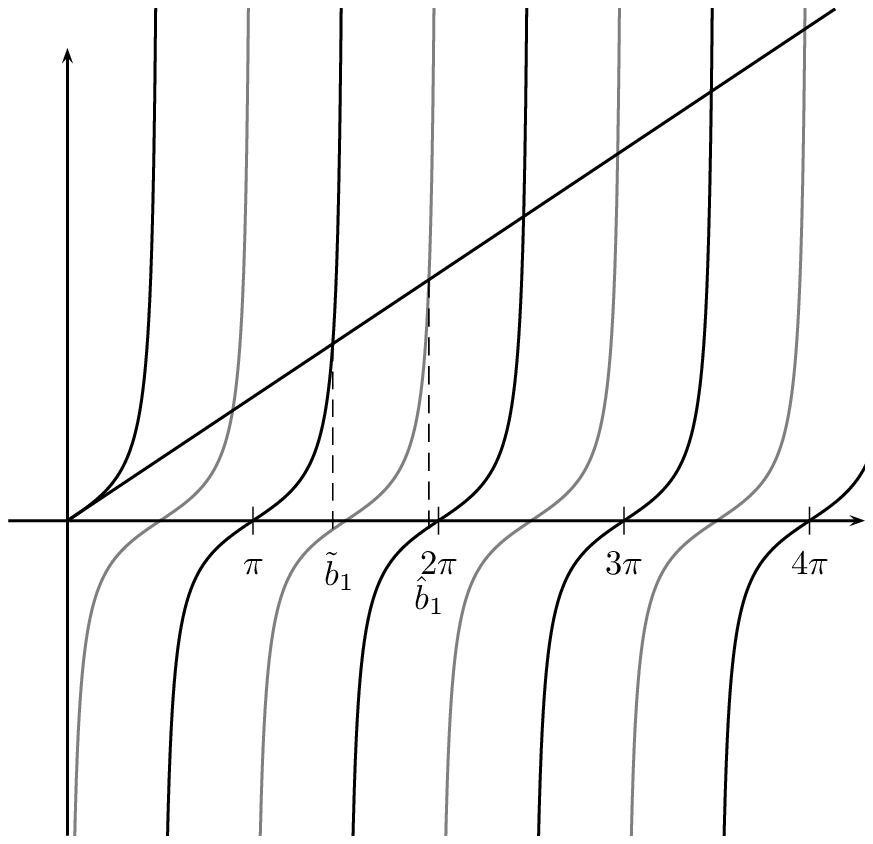}}{
%%%\psset{xunit=.6,yunit=.4}
%%%\psset{trigLabels=true,algebraic=true}
%%%\pspicture*(-1,-8)(13.5,13)
%%%\psplot[plotpoints=300]{0}{1.57}{sin(x)/cos(x) }
%%%\multido{\rsi=1.571+3.1416}{4}{
%%%\FPadd\nsil\rsi{0.005}
%%%\FPadd\nsir\rsi{3.13}
%%% \psplot[plotpoints=300]{\nsil}{\nsir}{sin(x)/cos(x)}
%%%}
%%%\multido{\rsi=0.0+3.1416}{4}{
%%%\FPadd\nsil\rsi{0.005}
%%%\FPadd\nsir\rsi{3.13}
%%%\psplot[plotpoints=300,linecolor=gray]{\nsil}{\nsir}{-cos(x)/sin(x)}
%%%}
%%%\psplot{0}{13}{x}
%%%\psline[linestyle=dashed,linewidth=0.4pt](4.493,-0.2)(4.493,4.493)\rput[t](4.6,-.8){$\tilde{b}_1$}
%%%\psline[linestyle=dashed,linewidth=0.4pt](6.121,-0.2)(6.121,6.121)\rput[t](6.121,-1.4){$\hat{b}_1$}
%%%\psaxes[labels=x,ticks=x,Dx=1,dx=3.1416]{->}(0,0)(-1,-8)(13.5,12)
%
%%%\endpspicture
}
\caption{The graphs of the functions $\tan$ (black) and $-\cot$ (gray) and the line $x\mapsto x$.}
\end{figure}

Since \begin{eqnarray*}
1-b\cot b & = & 1+\frac{b}{2}\tan\frac{b}{2}-\frac{b}{2}\cot\frac{b}{2}\\
 & \geq & \begin{cases}
1+\frac{b}{2}\tan\frac{b}{2} & b\in\left((2m+1)\pi,(2m+2)\pi\right)\\
1-\frac{b}{2}\cot\frac{b}{2} & b\in\left(2m\pi,(2m+1)\pi\right)\end{cases}\end{eqnarray*}
and $\tilde{b}_{2m+1},2\hat{b}_{m}\in\left((2m+1)\pi,(2m+2)\pi\right)$
it follows that $\tilde{b}_{2m+1}<2\hat{b}_{m}$; also, since $\tilde{b}_{2m},2\tilde{b}_{m}\in\left(2m\pi,(2m+1)\pi\right)$,
it follows that $\tilde{b}_{2m}<2\tilde{b}_{m}$. 

\begin{figure}
\centering
\mypsteps{\includegraphics{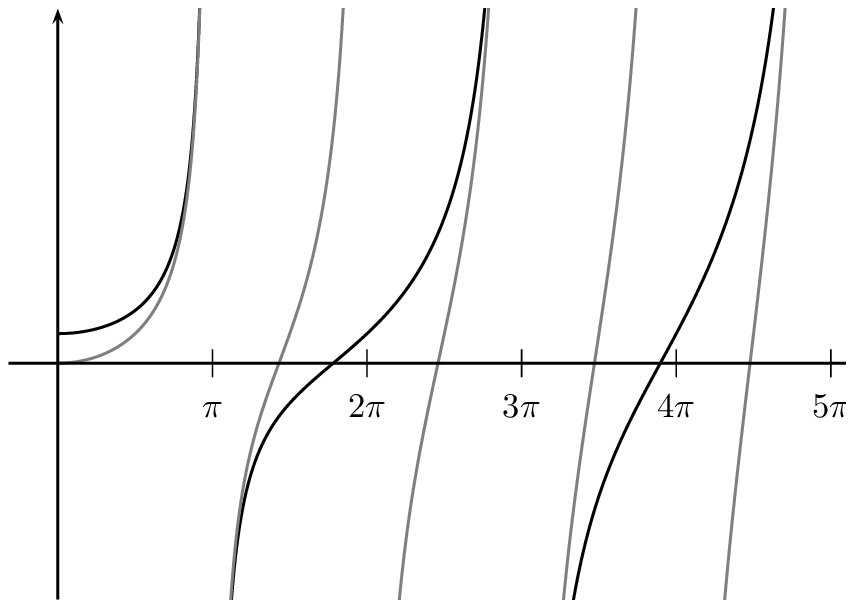}}{
\psset{xunit=.5,yunit=.3}
\psset{trigLabels=true,algebraic=true}
\pspicture*(-1,-8)(16,12)
\psplot[plotpoints=300]{0}{3.141}{1+x/2*sin(x/2)/cos(x/2) }
\multido{\rsi=3.1416+6.2832}{3}{
\FPadd\nsil\rsi{0.005}
\FPadd\nsir\rsi{6.27} \psplot[plotpoints=300]{\nsil}{\nsir}{1+x/2*sin(x/2)/cos(x/2) }
}
\multido{\rsi=0.0+3.1416}{5}{
\FPadd\nsil\rsi{0.005}
\FPadd\nsir\rsi{3.13}
\psplot[plotpoints=300,linecolor=gray]{\nsil}{\nsir}{1-x*cos(x)/sin(x)}
}
\psaxes[labels=x,ticks=x,Dx=1,dx=3.1416]{->}(0,0)(-1,-8)(17,12)
\endpspicture
}
\caption{The graphs of the functions $x\mapsto 1+\frac x 2\tan\frac x 2$ (black) and $x\mapsto 1-x\cot x$ (gray).}
\end{figure}

If $a\geq0$, $\mu'$ is a convex combination of $\tilde{\mu}'$ and
$\frac{1}{2}\hat{\mu}'\left(\frac{b}{2}\right)$. On the interval
$\left((2m+1)\pi,(2m+2)\pi\right)$ both functions are monotone increasing
with zero $\tilde{b}_{2m+1}$ of $\tilde{\mu}'$ lying left of $2\hat{b}_{m}$,
so we have \[
\mu'\left(\tilde{b}_{2m+1}\right)=\frac{a}{2}\hat{\mu}'\left(\frac{\tilde{b}_{2m+1}}{2}\right)\leq0\mbox{ and }\mu'\left(2\hat{b}_{m}\right)=(1-a)\tilde{\mu}'\left(2\hat{b}_{m}\right)>0.\]
Hence $\mu'$ has a zero $b_{m}$ in between: $\tilde{b}_{2m+1}\leq b_{m}<2\hat{b}_{m}$.
Due to the fact that $\mu$ is strictly convex, this critical point
is unique in the interval $((2m+1)\pi,(2m+2)\pi)$, and also a minimum.

In the interval $\left((2m+2)\pi,(2m+3)\pi\right)$ still both functions
are monotone increasing, but only $\tilde{\mu}'$ has a zero at $\tilde{b}_{2m+2}$.
$\frac{1}{2}\hat{\mu}'\left(\frac{b}{2}\right)$ is positive on this
interval and since $\tilde{\mu}'(b)$ tends to $-\infty$ as $\begin{CD}b@>{b>(2m+2)\pi}>>(2m+2)\pi\end{CD}$,
there is a zero $b_{2m+2}$ of $\mu'$ in this interval $((2m+2)\pi,(2m+3)\pi)$
with $(2m+2)\pi\leq b_{2m+2}<\tilde{b}_{2m+2}$. This critical point
is unique and a minimum, since $\mu$ is convex.

The case $a<0$ is similar, if we use $\frac{1}{2}\tilde{\mu}\left(\frac{b}{2}\right)$
instead of $\frac{1}{2}\hat{\mu}\left(\frac{b}{2}\right)$ and the
fact that in the interval $\left(2m\pi,(2m+1)\pi\right)$: $\tilde{b}_{2m}<2\tilde{b}_{m}$
are zeros of these two functions and $\frac{1}{2}\tilde{\mu}\left(\frac{b}{2}\right)$
has no zero in $\left((2m+1)\pi,(2m+2)\pi\right)$. 

To obtain the lower bound \eqref{eq:mu geq m 2} on $\mu$, we consider
the following cases:
\begin{lyxlist}{00}
\item [{\textbf{1.~case~$a\geq0$~and~$m$~even:}}] ~\\
Since $\frac{b_{m}}{2}\in\left(\left(\frac{m-2}{2}+\frac{1}{2}\right)\pi,\left(\frac{m}{2}+\frac{1}{2}\right)\pi\right)$,
we have\begin{alignat*}{1}
\mu(b_{m}) & =(1-a)\tilde{\mu}(b_{m})+a\hat{\mu}\left(\frac{b_{m}}{2}\right)\\
 & \geq(1-a)m\pi+a\frac{m-1}{2}\pi\\
 & \geq\frac{m-1}{2}\pi.\end{alignat*}

\item [{\textbf{2.~case~$a\geq0$~and~$m$~odd:}}] \textbf{~}\\
Since $\frac{b_{m}}{2}\in\left(\left(\frac{m-1}{2}+\frac{1}{2}\right)\pi,\frac{m+1}{2}\pi\right)$,
we have\begin{alignat*}{1}
\mu(b_{m}) & \geq(1-a)m\pi+a\frac{m}{2}\pi\\
 & \geq\frac{m}{2}\pi>\frac{m-1}{2}\pi.\end{alignat*}

\item [{\textbf{3.~case~$a<0$~and~$m$~even:}}] ~\\
Since $\frac{b_{m}}{2}\in\left(\frac{m}{2}\pi,\frac{m+1}{2}\pi\right)$,
we have\begin{alignat*}{1}
\mu(b_{m}) & =(1+a)\tilde{\mu}(b_{m})-a\tilde{\mu}\left(\frac{b_{m}}{2}\right)\\
 & \geq(1+a)m\pi-a\frac{m}{2}\pi\\
 & \geq\frac{m}{2}\pi>\frac{m-1}{2}\pi.\end{alignat*}

\item [{\textbf{4.~case~$a<0$~and~$m$~odd:}}] ~\\
Since $\frac{b_{m}}{2}\in\left(\frac{m-1}{2}\pi,\frac{m+1}{2}\pi\right)$,
we have\begin{alignat*}{1}
\mu(b_{m}) & \geq(1+a)m\pi-a\frac{m-1}{2}\pi\\
 & \geq\frac{m-1}{2}\pi.\end{alignat*}

\end{lyxlist}
At last, to establish the equation \eqref{eq:delta-equ}, we need
some more information of $b_{m}$, i.e. the critical points of $\mu$.
First observe, that for any $b\in\mathbb{R}\setminus\pi\mathbb{Z}^{\ast}$:\begin{equation}
b\mu(b)+b\cot b-\frac{b}{\sin b}a=l(b)\label{eq:l-b mu}\end{equation}
and\begin{alignat*}{1}
\frac{d}{db}\left(b\cot b-\frac{b}{\sin b}a\right) & =\cot b-\frac{b}{\sin^{2}b}-a\frac{1-b\cot b}{\sin b}\\
 & =-\mu(b).\end{alignat*}
So\begin{eqnarray}
l'(b) & = & \mu(b)+b\mu'(b)-\mu(b)\nonumber \\
 & = & b\mu'(b).\label{eq:l = b mu}\end{eqnarray}
To find the critical points of $\mu$, we differentiate:\begin{align*}
\mu'(b) & =2\frac{1-b\cot b}{\sin^{2}b}+a\frac{\frac{b}{\sin b}-\cos b-\cos b(1-b\cot b)}{\sin^{2}b}\\
 & =2\frac{1-b\cot b}{\sin^{2}b}+a\frac{b\sin b-2\cos b(1-b\cot b)}{\sin^{2}b}.\end{align*}
So\begin{equation}
\mu'(b)=0\Leftrightarrow a=\frac{1-b\cot b}{\cos b(1-b\cot b)-\frac{b}{2}\sin b}.\end{equation}
From this and \eqref{eq:l-b mu} we get: \begin{align*}
l(b_{m})-b_{m}\mu(b_{m}) & =b_{m}\cot b_{m}-\frac{b_{m}}{\sin b_{m}}a\\
 & =1-\left(1-b_{m}\cot b_{m}\right)-\frac{b_{m}}{\sin b_{m}}a\\
 & =1-a\left(\cos b_{m}\left(1-b_{m}\cot b_{m}\right)-\frac{b_{m}}{2}\sin b_{m}+\frac{b_{m}}{\sin b_{m}}\right)\\
 & =1-a\left(\cos b_{m}\left(1-b_{m}\cot b_{m}\right)+\frac{b_{m}}{2}\sin b_{m}+b_{m}\cos b_{m}\cot b_{m}\right)\\
 & =1-a\left(\cos b_{m}+\frac{b_{m}}{2}\sin b_{m}\right)\\
 & =1-a\delta(b_{m}).\end{align*}

\end{proof}
%
\begin{comment}
In each interval $(m\pi,(m+1)\pi)$ there is exactly one zero, which
is $\tilde{b}_{m}$.

\begin{itemize}
\item Since \begin{align*}
\psi(b)=1 & \Leftrightarrow\frac{(1-\cos b)(1-b\cot b)+\frac{b}{2}\sin b}{\cos b(1-b\cot b)-\frac{b}{2}\sin b}=0\\
 & \Leftrightarrow2\frac{\sin^{2}\frac{b}{2}(1-b\cot b+\frac{b}{2}\cot\frac{b}{2})}{\cos b(1-b\cot b)-\frac{b}{2}\sin b}=0\\
 & \Leftrightarrow2\frac{\sin^{2}\frac{b}{2}(1+\frac{b}{2}\tan\frac{b}{2})}{\cos b(1-b\cot b)-\frac{b}{2}\sin b}=0\\
 & \Leftrightarrow\exists m\in\mathbb{N}:b=2m\pi\vee b=\hat{b}_{m}\end{align*}

\item and\begin{align*}
\psi(b)=-1 & \Leftrightarrow\frac{(1+\cos b)(1-b\cot b)-\frac{b}{2}\sin b}{\cos b(1-b\cot b)-\frac{b}{2}\sin b}=0\\
 & \Leftrightarrow2\frac{\cos^{2}\frac{b}{2}(1-b\cot b-\frac{b}{2}\cot\frac{b}{2})}{\cos b(1-b\cot b)-\frac{b}{2}\sin b}=0\\
 & \Leftrightarrow2\frac{\cos^{2}\frac{b}{2}(1-\frac{b}{2}\cot\frac{b}{2})}{\cos b(1-b\cot b)-\frac{b}{2}\sin b}=0\\
 & \Leftrightarrow\exists m\in\mathbb{N}:b=(2m+1)\pi\vee b=2\tilde{b}_{m},\end{align*}

\end{itemize}

\end{comment}
{}For the proof of the next theorem we need another lemma:

\begin{lem}
On each interval $\left(m\pi,(m+1)\pi\right),\; m\in\mathbb{N}$ $\delta$
has a unique zero, and a unique critical point at $\tilde{b}_{m}$,
which is a maximum with $\delta(\tilde{b}_{m})>0$, if m is even and
a minimum with $\delta(\tilde{b}_{m})<0$, if m is odd .
\end{lem}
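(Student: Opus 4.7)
The plan is to reduce everything to the identity $\delta'(b)=\tfrac{1}{2}(b\cos b-\sin b)$, which ties the critical points of $\delta$ to the equation $1-b\cot b=0$ already analyzed in Lemma \ref{lem:tilde mu}. First I would differentiate directly:
\begin{equation*}
\delta'(b)=-\sin b+\tfrac{1}{2}\sin b+\tfrac{b}{2}\cos b=\tfrac{1}{2}\sin b\,(b\cot b-1).
\end{equation*}
Since $\sin b\neq 0$ on the open interval $(m\pi,(m+1)\pi)$, the vanishing of $\delta'$ there is equivalent to $1-b\cot b=0$, and by Lemma \ref{lem:tilde mu} this equation has the unique solution $\tilde b_m$ in that interval. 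This immediately gives uniqueness of the critical point and identifies it as $\tilde b_m$.

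Next I would determine the nature of the critical point by computing
\begin{equation*}
\delta''(b)=-\tfrac{b}{2}\sin b,
\end{equation*}
so $\operatorname{sgn}\delta''(\tilde b_m)=-\operatorname{sgn}\sin\tilde b_m$. Because $\tilde b_m\in(m\pi,(m+1)\pi)$, one has $\sin\tilde b_m>0$ when $m$ is even and $\sin\tilde b_m<0$ when $m$ is odd; hence $\tilde b_m$ is a local maximum in the even case and a local minimum in the odd case. To evaluate $\delta$ at the critical point I would use the defining relation $\cot\tilde b_m=1/\tilde b_m$, i.e.\ $\cos\tilde b_m=\sin\tilde b_m/\tilde b_m$, and substitute:
\begin{equation*}
\delta(\tilde b_m)=\frac{\sin\tilde b_m}{\tilde b_m}+\frac{\tilde b_m}{2}\sin\tilde b_m=\sin\tilde b_m\left(\frac{1}{\tilde b_m}+\frac{\tilde b_m}{2}\right).
\end{equation*}
The factor in parentheses is strictly positive, so $\operatorname{sgn}\delta(\tilde b_m)=\operatorname{sgn}\sin\tilde b_m$; this gives $\delta(\tilde b_m)>0$ for $m$ even and $\delta(\tilde b_m)<0$ for $m$ odd, as required.

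Finally, for the unique zero I would use the boundary values $\delta(m\pi)=(-1)^m$ and $\delta((m+1)\pi)=(-1)^{m+1}$, together with the shape information just obtained. If $m$ is even, $\delta$ increases from $1$ to its maximum $\delta(\tilde b_m)>0$ on $(m\pi,\tilde b_m)$, so it stays positive there; on $(\tilde b_m,(m+1)\pi)$ it decreases strictly from a positive value to $-1$, so by the intermediate value theorem it has exactly one zero. If $m$ is odd, $\delta$ decreases from $-1$ to $\delta(\tilde b_m)<0$ on $(m\pi,\tilde b_m)$ and then increases strictly from a negative value to $1$ on $(\tilde b_m,(m+1)\pi)$, giving again exactly one zero, located in the second subinterval. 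The only mildly delicate step is the sign computation of $\delta(\tilde b_m)$, but it falls out of the substitution $\cot\tilde b_m=1/\tilde b_m$; everything else is a routine monotonicity argument.
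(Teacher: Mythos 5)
Your proof is correct and its core computations (the two derivatives of $\delta$) coincide with the paper's. Where you differ is in completeness: the paper disposes of the zero by rewriting $\delta(b)=0$ as $2+b\tan b=0$ and simply asserting uniqueness per period, whereas you derive the factorization $\delta'(b)=\tfrac12\sin b\,(b\cot b-1)$, use it to get strict monotonicity on either side of $\tilde b_m$, and combine that with the boundary values $\delta(m\pi)=(-1)^m$ to locate the unique zero by the intermediate value theorem --- a somewhat longer but more self-contained argument. More importantly, the paper's proof never actually establishes the sign claims $\delta(\tilde b_m)>0$ for $m$ even and $\delta(\tilde b_m)<0$ for $m$ odd that appear in the statement; your substitution $\cos\tilde b_m=\sin\tilde b_m/\tilde b_m$, giving $\delta(\tilde b_m)=\sin\tilde b_m\bigl(\tfrac{1}{\tilde b_m}+\tfrac{\tilde b_m}{2}\bigr)$, is a clean way to fill that gap and is arguably an improvement over the original.
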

\begin{figure}[h]
\centering
\mypsteps{\includegraphics{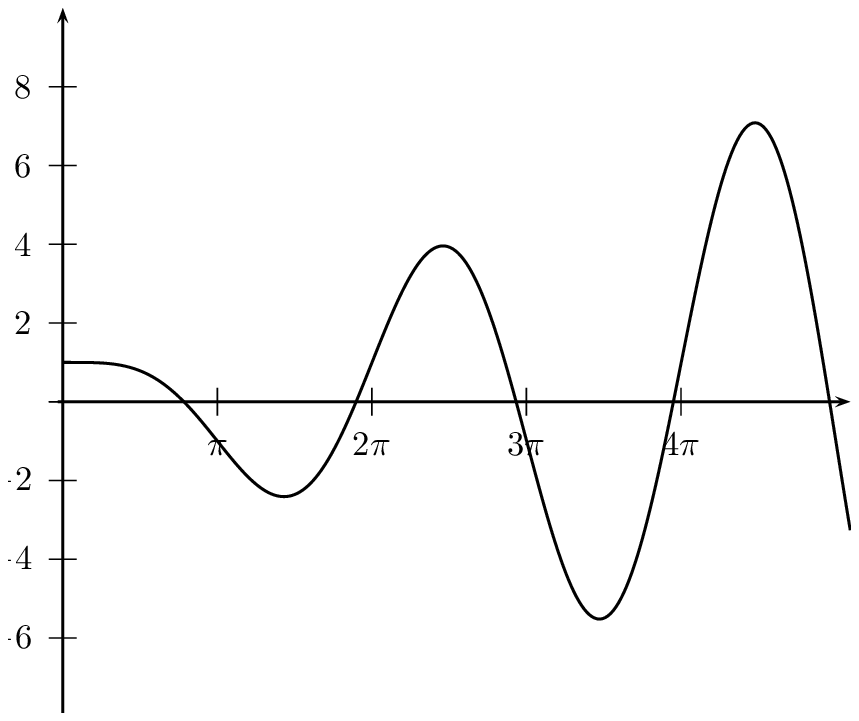}}{
\psset{xunit=.5,yunit=0.4,algebraic=true,trigLabels=true}
\pspicture*(-1.1,-8)(16,10)
  \psplot[plotpoints=300]{0.001}{16}{cos(x)+x*sin(x)/2}
\psaxes[ticks=all,dy=2,Dy=2,Dx=1,dx=3.14]{->}(0,0)(-0.1,-7.9)(16,10)
\endpspicture
}
\caption{The function $\delta$.}
\end{figure}

\begin{proof}
Since\begin{align*}
\delta(b)=0 & \Leftrightarrow\cos b+\frac{b}{2}\sin b=0\\
 & \Leftrightarrow2+b\tan b=0,\end{align*}
there is a unique zero in each interval $\left(m\pi,(m+1)\pi\right),\; m\in\mathbb{N}_{0}$.

The critical points of $\delta$ are given by\begin{align*}
\delta'(b)=0 & \Leftrightarrow-\sin b+\frac{1}{2}\sin b+\frac{b}{2}\cos b=0\\
 & \Leftrightarrow b\cot b-1=0\\
 & \Leftrightarrow b=\tilde{b}_{m}.\end{align*}
Since \begin{equation}
\delta''(b)=-\frac{b}{2}\sin b,\end{equation}
we see that $\tilde{b}_{m}$ is a maximum, if $m$ is even and a minimum,
if $m$ is odd.
\end{proof}
\begin{thm}
\label{thm:allg geo}Given $(x_{1},0),\,(x,u)\in\mathbb{R}^{n}\times\mathbb{R}$
with $x_{1}\neq\pm x$ and $u>0$, there are finitely many geodesics
joining these two points. These geodesics are given by \eqref{eq:geo1},
where $b$ is a solution of\begin{equation}
\frac{2u}{\left|x_{1}\right|^{2}+\left|x\right|^{2}}=\mu(b),\label{eq:solution}\end{equation}
and their lengths are strictly increasing with $b$. Moreover the
shortest geodesic joining $(x_{1},0)$ and $(x,u)$ is given by the
unique solution $b\in(0,\pi)$ of \eqref{eq:solution} in the interval
$(0,\pi)$. With this solution $b$ the Carnot-Carathéodory distance
is\begin{alignat}{1}
d_{CC}\left((x_{1},0),(x,u)\right) & =L(\gamma^{b})\label{eq:geo2-length}\\
 & =\frac{b}{\sin b}\sqrt{\left|x_{1}\right|^{2}+\left|x\right|^{2}-2x_{1}\cdot x\cos b}\nonumber \\
 & =\sqrt{2bu+\left(\left|x_{1}\right|^{2}+\left|x\right|^{2}\right)b\cot b-2\frac{b}{\sin b}x_{1}\cdot x}.\nonumber \end{alignat}

\end{thm}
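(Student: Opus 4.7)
The plan is to use Lemma \ref{lem:allg geo gln} to translate the geodesic problem into the scalar equation $\mu(b)=2u/R^{2}$, and then combine this with the structural information on $\mu$ from the preceding lemmas to count the solutions and rank them by length. Specifically, each geodesic joining $(x_{1},0)$ and $(x,u)$ has the form $\gamma^{b}$ with $c=(b/\sin b)\,x-x_{1}b\cot b$ determined by $b$, where $b$ solves $\mu(b)=\kappa:=2u/R^{2}$ (with $R^{2}=|x_{1}|^{2}+|x|^{2}$ and $a=2x_{1}\cdot x/R^{2}\in(-1,1)$), and the squared length is $L^{2}(\gamma^{b})=R^{2}l(b)$. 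The set of such geodesics is finite: $\mu$ is a monotone bijection $[0,\pi)\to[0,\infty)$ contributing the unique solution $b_{*}\in(0,\pi)$, and on each interval $(m\pi,(m+1)\pi)$, $m\ge1$, $\mu$ is strictly convex with a unique minimum $b_{m}$ satisfying $\mu(b_{m})\ge(m-1)\pi/2$, yielding $0$, $1$ or $2$ solutions according as $\kappa<\mu(b_{m})$, $=\mu(b_{m})$ or $>\mu(b_{m})$. Since $\mu(b_{m})\to\infty$, only finitely many intervals contribute.

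For the length monotonicity the key algebraic identity is \eqref{eq:l-b mu}, which at a solution reads $l(b)=\kappa b+H(b)$ with $H(b):=b\cot b-ab/\sin b$ and $H'(b)=-\mu(b)$. For two solutions $b_{1}<b_{2}$ in the same interval $(m\pi,(m+1)\pi)$, the fundamental theorem of calculus yields
\[
l(b_{2})-l(b_{1})=\kappa(b_{2}-b_{1})+H(b_{2})-H(b_{1})=\int_{b_{1}}^{b_{2}}(\kappa-\mu(b))\,db>0,
\]
since $\mu$ dips strictly below $\kappa$ on $(b_{1},b_{2})$ as it passes through its minimum $\mu(b_{m})<\kappa$. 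Once monotonicity is established, the shortest geodesic corresponds to the smallest solution of $\mu(b)=\kappa$, namely the unique $b_{*}\in(0,\pi)$; the two explicit formulas for $d_{CC}$ stated in the theorem are then just the length expressions \eqref{eq:length} evaluated at $b_{*}$.

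The hard part is the cross-interval case of the length monotonicity, where the integration-by-parts identity breaks down because $H$ and $\mu$ have singularities at each intervening $k\pi$. My strategy is a continuity argument in $\kappa$: differentiating $\mu(b_{m}^{\pm}(\kappa))=\kappa$ together with $l'(b)=b\mu'(b)$ gives the clean formula $\tfrac{d}{d\kappa}l(b_{m}^{\pm}(\kappa))=b_{m}^{\pm}(\kappa)$, and a new pair of branches emerges at $\kappa=\mu(b_{m})$ coinciding at $b_{m}$ with common value $l(b_{m})$. Since $b_{1}<b_{2}$ implies $\tfrac{d}{d\kappa}l(b_{2})>\tfrac{d}{d\kappa}l(b_{1})$, the ordering between two consecutive active branches is preserved for all $\kappa$ larger than the instant at which the right-hand branch first appears; it remains to verify the base inequality at that instant, which I would handle by combining the growth estimate $l(b_{m})\to\infty$ (a consequence of $\mu(b_{m})\to\infty$) with the fact that any branch active at $\kappa=\mu(b_{m})$ lies strictly to the left of $b_{m}$.
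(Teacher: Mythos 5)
Your reduction to the scalar equation $\mu(b)=\kappa:=2u/R^{2}$ via Lemma~\ref{lem:allg geo gln}, the finiteness argument, and the within-interval comparison are all correct and essentially the same as the paper's. The within-interval step is a nice streamlining: writing $l(b)=\kappa b+H(b)$ with $H'=-\mu$ at solutions gives
\[
l(b_{2})-l(b_{1})=\int_{b_{1}}^{b_{2}}\bigl(\kappa-\mu(b)\bigr)\,db>0
\]
immediately, which is equivalent to the paper's integration-by-parts argument but a bit cleaner. Your identity $\tfrac{d}{d\kappa}\,l(b(\kappa))=b(\kappa)$, obtained from $l'=b\mu'$ and $\mu'(b)\,b'(\kappa)=1$, is also correct and is an elegant observation.

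However, the cross-interval comparison --- which you correctly identify as the hard part, and which is where the paper expends most of its effort --- is left with a genuine gap. Your continuity-in-$\kappa$ argument reduces the inequality $l(t_{m})\le l(s_{m+1})$ to a base case at the instant the right-hand branch first appears, say at $\kappa_{*}=\max\bigl(\mu(b_{m}),\mu(b_{m+1})\bigr)$. At that instant one must verify $l(t_{m}(\kappa_{*}))\le l(s_{m+1}(\kappa_{*}))$, and your proposed justification (``$l(b_{m})\to\infty$ and branches lie to the left of $b_{m}$'') does not establish it. The growth $\mu(b_{m})\to\infty$ is asymptotic and gives no quantitative comparison between $l$ at the rightmost surviving branch in $(m\pi,(m+1)\pi)$ and $l(b_{m+1})$; nor is it even clear a priori that $\mu(b_{m})$ is monotone in $m$, so one cannot naively argue by ``new branches always appear on the right of everything.'' This base case is precisely what the paper's four-case analysis (via the function $\delta$ and the identity $l(b_{m})-b_{m}\mu(b_{m})=1-a\,\delta(b_{m})$ of \eqref{eq:delta-equ}, combined with separate treatments using $\hat\mu$ or $\tilde\mu$ according to the sign of $a$ and the parity of $m$) is designed to handle, and nothing in your sketch replaces it. To complete your route you would still need an argument of comparable substance at the jump $\kappa=\kappa_{*}$, at which point the continuity-in-$\kappa$ device, while pretty, buys you relatively little over the paper's direct estimate.
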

\begin{figure}[ht]
\centering
\mypsteps{\includegraphics{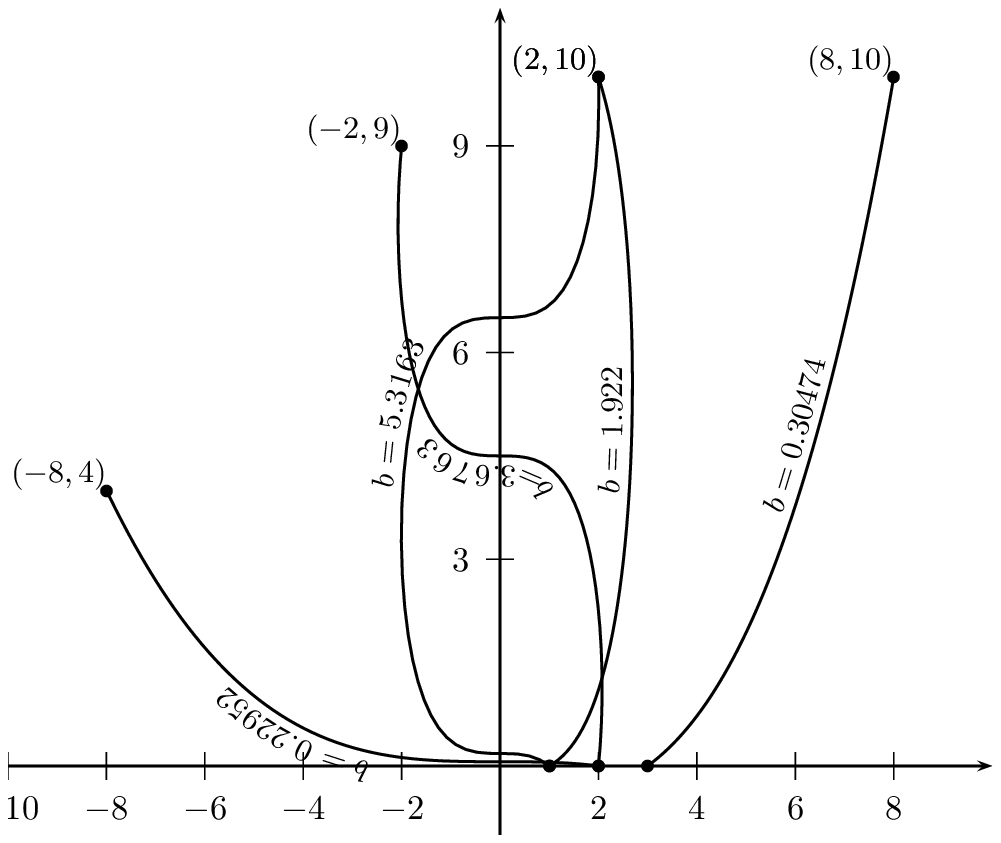}}{
\psset{yunit=.7,xunit=.5,VarStep=true,algebraic=true}
\pspicture*(-10,-1)(10,11)
%start (x,0) ende (y,u)
\def\geo#1#2#3{(#2*sin(#1*t)/#1+#3*cos(#1*t))|(#2^2*(t/2-sin(2*#1*t)/4/#1)/#1+#3*#2*sin(#1*t)^2/#1+#3^2*#1*(t/2+sin(2*#1*t)/4/#1))} %allg geo param b,c,x
\def\geoplot#1#2#3#4{%plot geo x,y,u,b
\psdots(#1,0)(#2,#3)
\rput[rb](#2,#3){\small{$(#2,#3)$}}
\pstextpath[c](0,.1){
\parametricplot{0}{1}{\geo{#4}{(#2*#4/sin(#4)-#1*#4*cos(#4)/sin(#4))}{#1}}}
{\small{$b=#4$}}}
%\geoplot{.5} 1 5 {2.2836}
%\geoplot{.5} 1 5 {5.7992}
%\geoplot{.5} 1 5 {7.7673}
\geoplot{1} 2 {10} {1.922}
\geoplot{1} 2 {10} {5.3163}
\geoplot{3} 8 {10} {0.30474}
\geoplot{2}{-8}{4}{0.22952}
\geoplot{2}{-2}{9}{3.6763}
\psaxes[ticks=all,dx=2,Dx=2,dy=3,Dy=3]{->}(0,0)(-10,-1)(10,11)
\endpspicture
}
\caption{Geodesics joing different points.}
\end{figure}

\begin{proof}
From Lemma \ref{lem:allg geo gln} we have that the geodesics in this
case are implicitly given by solutions of the equation $\frac{2u}{\left|x_{1}\right|^{2}+\left|x\right|^{2}}=\mu(b)$.
The properties of the function $\mu$ show us that there is exactly
one solution in the interval $(0,\pi)$ and at most two solutions
in each interval $(m\pi,(m+1)\pi),\; m\in\mathbb{N}$. So it remains
to prove, that the values of $l$ at these solutions are strictly
increasing, i.e. if $s<t$ are two solutions, then $l(s)<l(t)$.

Let $s\in\left(m\pi,(m+1)\pi\right),\; m\in\mathbb{N}$, then we have
by \eqref{eq:l =00003D b mu}\begin{align*}
l(s) & =\int_{b_{m}}^{s}l'(b)db+l(b_{m})\\
 & =\int_{b_{m}}^{s}b\mu'(b)db+l(b_{m})\\
 & =b\mu(b)|_{b_{m}}^{s}-\int_{b_{m}}^{s}\mu(b)db+l(b_{m})\\
 & =s\mu(s)-\int_{b_{m}}^{s}\mu(b)db+l(b_{m})-b_{m}\mu(b_{m})\\
 & =s\mu(s)-\int_{b_{m}}^{s}\mu(b)db+1-a\delta(b_{m}).\end{align*}
So, if we have two solutions $s\leq t$ of \eqref{eq:solution} in
the same interval $\left(m\pi,(m+1)\pi\right),\; m\in\mathbb{N}$,
we see that\begin{align*}
l(s)-l(t) & =s\mu(s)-\int_{b_{m}}^{s}\mu(b)db-t\mu(t)+\int_{b_{m}}^{t}\mu(b)db\\
 & =(s-t)\mu(s)+\int_{s}^{t}\mu(b)db\\
 & \leq0.\end{align*}
Now let $s_{m},t_{m}\in\left(m\pi,(m+1)\pi\right),\; s_{m}\leq t_{m},\; m\in\mathbb{N}$
be the solutions of \eqref{eq:solution}. To compare the corresponding
{}``lengths'' $l$ at $t_{m}$ and $s_{m+1}$, notice that\begin{multline}
l(t_{m})-l(s_{m+1})\label{eq:l(t_m)-l(s_m+1)}\\
=-\left(s_{m+1}-t_{m}\right)\mu(t_{m})-\int_{b_{m}}^{t_{m}}\mu(b)db-\int_{s_{m+1}}^{b_{m+1}}\mu(b)db-\left(\delta(b_{m})-\delta(b_{m+1})\right)a.\end{multline}
Now consider the following cases:
\begin{lyxlist}{00}
\item [{\textbf{1.~case~$a\geq0$~and~$m$~even:}}] ~\\
Then, since $m\pi<b_{m}\leq\tilde{b}_{m}$, $\delta(m\pi)=1$ and
$\tilde{b}_{m}$ is a maximum of $\delta$, it follows that $\delta(b_{m})>1$.
And, since $\tilde{b}_{m+1}\leq b_{m+1}<(m+2)\pi$, $\delta((m+2)\pi)=1$
and $\tilde{b}_{m+1}$ is a minimum of $\delta$, we have $\delta(b_{m})<1$.
So $\left(\delta(b_{m})-\delta(b_{m+1})\right)a\geq0$, which gives
the claim, since $\mu\geq0$.
\item [{\textbf{2.~case~$a<0$~and~$m$~odd:}}] ~\\
Then, since $m\pi<b_{m}\leq\tilde{b}_{m}$, $\delta(m\pi)=-1$ and
$\tilde{b}_{m}$ is a minimum of $\delta$, it follows that $\delta(b_{m})<-1$.
And, since $\tilde{b}_{m+1}\leq b_{m+1}<(m+2)\pi$, $\delta((m+2)\pi)=-1$
and $\tilde{b}_{m+1}$ is a maximum of $\delta$, we have $\delta(b_{m})>-1$.
So $\left(\delta(b_{m})-\delta(b_{m+1})\right)a\geq0$, which gives
the claim, since \textbf{$\mu\geq0$}.
\item [{\textbf{3.~case~$a\geq0$~and~$m$~odd:}}] ~\\
Here $b_{m}\leq2\hat{b}_{\frac{m-1}{2}}<s_{m+1}\leq b_{m+1}$. Let\[
s:=\begin{cases}
t_{m} & ,\mbox{if }\hat{\mu}\left(\frac{t_{m}}{2}\right)>\hat{\mu}\left(\frac{s_{m+1}}{2}\right)\\
s_{m+1} & ,\mbox{if }\hat{\mu}\left(\frac{t_{m}}{2}\right)\leq\hat{\mu}\left(\frac{s_{m+1}}{2}\right)\end{cases}.\]
Then, since $2\hat{b}_{\frac{m-1}{2}}$ is a minimum of $\hat{\mu}\left(\frac{b}{2}\right)$
on $\left(m\pi,(m+2)\pi\right)$ and monotone on the left and right
of $2\hat{b}_{\frac{m-1}{2}}$: $\max_{b\in[t_{m},s_{m+1}]}\hat{\mu}\left(\frac{b}{2}\right)=\hat{\mu}\left(\frac{s}{2}\right)$
and therefore\begin{align*}
-(s_{m+1}-t_{m})\mu(t_{m}) & =-\left(s_{m+1}-t_{m}\right)\mu(s)\\
 & =-\left(s_{m+1}-t_{m}\right)\left((1-a)\tilde{\mu}(s)+a\hat{\mu}\left(\frac{s}{2}\right)\right)\\
 & \leq-\left(s_{m+1}-t_{m}\right)a\hat{\mu}\left(\frac{s}{2}\right)\\
 & \leq-a\int_{t_{m}}^{s_{m+1}}\hat{\mu}\left(\frac{b}{2}\right)db.\end{align*}
Notice that $\hat{\mu}(\frac{b}{2})$ has no singularity at $(m+1)\pi$.
Now \eqref{eq:l(t_m)-l(s_m+1)} becomes, since $\mu(b)\geq a\hat{\mu}\left(\frac{b}{2}\right)$:\begin{align*}
l(t_{m})-l(s_{m+1}) & \leq-a\int_{b_{m}}^{b_{m+1}}\hat{\mu}\left(\frac{b}{2}\right)db-\left(\delta(b_{m})-\delta(b_{m+1})\right)a\\
 & =-a\int_{b_{m}}^{b_{m+1}}\left(\frac{b/2}{\cos^{2}\frac{b}{2}}+\tan\frac{b}{2}\right)db-\left(\delta(b_{m})-\delta(b_{m+1})\right)a\\
 & =-ab\tan\frac{b}{2}|_{b_{m}}^{b_{m+1}}-\left(\delta(b_{m})-\delta(b_{m+1})\right)a\\
 & =a\left(\delta(b_{m+1})-b_{m+1}\tan\frac{b_{m+1}}{2}-\left(\delta(b_{m})-b_{m}\tan\frac{b_{m}}{2}\right)\right).\end{align*}
Further\begin{align*}
\delta(b)-b\tan\frac{b}{2} & =\cos b+\frac{b}{2}\sin b-b\tan\frac{b}{2}\\
 & =b\sin\frac{b}{2}\left(\cos\frac{b}{2}-\frac{1}{\cos\frac{b}{2}}\right)-2\sin^{2}\frac{b}{2}+1\\
 & =-\left(b\sin^{2}\frac{b}{2}\tan\frac{b}{2}+2\sin^{2}\frac{b}{2}\right)+1\\
 & =-2\sin^{2}\frac{b}{2}\left(1+\frac{b}{2}\tan\frac{b}{2}\right)+1.\end{align*}
Since $b_{m}\leq2\hat{b}_{\frac{m-1}{2}}\leq b_{m+1}$ and $1+\hat{b}_{\frac{m-1}{2}}\tan\hat{b}_{\frac{m-1}{2}}=0$
and $b\mapsto1+\frac{b}{2}\tan\frac{b}{2}$ is increasing on $(m\pi,(m+2)\pi)$,
we conclude:\begin{eqnarray*}
\lefteqn{l(t_{m})-l(s_{m+1})\mbox{}}\\
 & \leq & 2a\left(\sin^{2}\frac{b_{m}}{2}\left(1+\frac{b_{m}}{2}\tan\frac{b_{m}}{2}\right)-\sin^{2}\frac{b_{m+1}}{2}\left(1+\frac{b_{m+1}}{2}\tan\frac{b_{m+1}}{2}\right)\right)\\
 & \leq & 0.\end{eqnarray*}

\item [{\textbf{4.~case~$a<0$~and~$m$~even:}}] ~\\
This is very similar to the third case, if we use $\tilde{\mu}$.
Here $b_{m}\leq2\tilde{b}_{\frac{m}{2}}<s_{m+1}\leq b_{m+1}$. Let\[
s:=\begin{cases}
t_{m} & ,\mbox{ if }\tilde{\mu}\left(\frac{t_{m}}{2}\right)>\tilde{\mu}\left(\frac{s_{m+1}}{2}\right)\\
s_{m+1} & ,\mbox{ if }\tilde{\mu}\left(\frac{t_{m}}{2}\right)\leq\tilde{\mu}\left(\frac{s_{m+1}}{2}\right)\end{cases}.\]
Then, since $2\tilde{b}_{\frac{m}{2}}$ is a minimum of $\tilde{\mu}\left(\frac{b}{2}\right)$
on $\left(m\pi,(m+2)\pi\right)$ and monotone on the left and right
of $2\tilde{b}_{\frac{m}{2}}$: $\max_{b\in[t_{m},s_{m+1}]}\tilde{\mu}(\frac{b}{2})=\tilde{\mu}(\frac{s}{2})$
and therefore\begin{align*}
-(s_{m+1}-t_{m})\mu(t_{m}) & =-\left(s_{m+1}-t_{m}\right)\mu(s)\\
 & =-\left(s_{m+1}-t_{m}\right)\left((1+a)\tilde{\mu}(s)-a\tilde{\mu}\left(\frac{s}{2}\right)\right)\\
 & \leq\left(s_{m+1}-t_{m}\right)a\tilde{\mu}\left(\frac{s}{2}\right)\\
 & \leq a\int_{t_{m}}^{s_{m+1}}\tilde{\mu}\left(\frac{b}{2}\right)db.\end{align*}
Notice that $\tilde{\mu}(\frac{b}{2})$ has no singularity at $(m+1)\pi$.
Now \eqref{eq:l(t_m)-l(s_m+1)} becomes, since $\mu(b)\geq-a\tilde{\mu}(b/2)$:\begin{align*}
l(t_{m})-l(s_{m+1}) & \leq a\left(\int_{b_{m}}^{b_{m+1}}\tilde{\mu}\left(\frac{b}{2}\right)db-\delta(b_{m})+\delta(b_{m+1})\right)\\
 & =a\left(\int_{b_{m}}^{b_{m+1}}\left(\frac{b/2}{\sin^{2}\frac{b}{2}}-\cot\frac{b}{2}\right)db-\delta(b_{m})+\delta(b_{m+1})\right)\\
 & =a\left(-b\cot\frac{b}{2}|_{b_{m}}^{b_{m+1}}-\delta(b_{m})+\delta(b_{m+1})\right)\\
 & =a\left(b_{m}\cot\frac{b_{m}}{2}-\delta(b_{m})-(b_{m+1}\cot\frac{b_{m+1}}{2}-\delta(b_{m+1}))\right).\end{align*}
Further\begin{align*}
b\cot\frac{b}{2}-\delta(b) & =-\cos b-\frac{b}{2}\sin b+b\cot\frac{b}{2}\\
 & =-b\cos\frac{b}{2}\left(\sin\frac{b}{2}-\frac{1}{\sin\frac{b}{2}}\right)-2\cos^{2}\frac{b}{2}+1\\
 & =-2\cos^{2}\frac{b}{2}\left(1-\frac{b}{2}\cot\frac{b}{2}\right)+1.\end{align*}
Since $b_{m}\leq2\tilde{b}_{\frac{m}{2}}\leq b_{m+1}$ and $1-\tilde{b}_{\frac{m}{2}}\cot\tilde{b}_{\frac{m}{2}}=0$
and $b\mapsto1-\frac{b}{2}\cot\frac{b}{2}$ is increasing on $(m\pi,(m+2)\pi)$,
we conclude:\begin{eqnarray*}
\lefteqn{l(t_{m})-l(s_{m+1})}\\
 & \leq & -2a\left(\cos^{2}\frac{b_{m}}{2}\left(1-\frac{b_{m}}{2}\cot\frac{b_{m}}{2}\right)-\cos^{2}\frac{b_{m+1}}{2}\left(1-\frac{b_{m+1}}{2}\cot\frac{b_{m+1}}{2}\right)\right)\\
 & \leq & 0.\end{eqnarray*}
If $t_{0}\in(0,\pi)$ denotes the solution of \eqref{eq:solution},
it remains to show that $l(t_{0})<l(s_{1})$. The above calculations
can be extended to this, if one sets $b_{0}=0$. In detail we have\begin{eqnarray*}
l(t_{0}) & = & \int_{0}^{t_{0}}l'(b)db+l(0)\\
 & = & t_{0}\mu(t_{0})-\int_{0}^{t_{0}}\mu(b)db+1-a,\end{eqnarray*}
so that\[
l(t_{0})-l(s_{1})=-(s_{1}-t_{0})\mu(s_{1})-\int_{0}^{t_{0}}\mu(b)db-\int_{s_{1}}^{b_{1}}\mu(b)db-\left(1-\delta(b_{1})\right)a.\]
Again from the previous lemma one gets $l(t_{0})<l(s_{1})$ if $a\geq0$,
since $1-\delta(b_{1})>0$ and $\mu>0$. If $a<0$, then since $\tilde{\mu}$
is increasing on $[0,\pi)$:\begin{eqnarray*}
-(s_{1}-t_{0})\mu(s_{1}) & = & -(s_{1}-t_{0})\left((1+a)\tilde{\mu}(s_{1})-a\tilde{\mu}\left(\frac{s_{1}}{2}\right)\right)\\
 & \leq & (s_{1}-t_{0})a\tilde{\mu}\left(\frac{s_{1}}{2}\right)\\
 & \leq & a\int_{t_{0}}^{s_{1}}\tilde{\mu}\left(\frac{b}{2}\right)db.\end{eqnarray*}
So we get\begin{eqnarray*}
l(t_{0})-l(s_{1}) & \leq & a\int_{0}^{b_{1}}\tilde{\mu}\left(\frac{b}{2}\right)db-\left(1-\delta(b_{1})\right)a\\
 & = & a\left(-b\cot\frac{b}{2}|_{0}^{b_{1}}-1+\delta(b_{1})\right)\\
 & = & a\left(\delta(b_{1})-b_{1}\cot\frac{b_{1}}{2}+1\right)\\
 & = & 2a\cos^{2}\frac{b_{1}}{2}\left(1-\frac{b_{1}}{2}\cot\frac{b_{1}}{2}\right)\\
 & \leq & 0,\end{eqnarray*}
since $1-\frac{b}{2}\cot\frac{b}{2}\geq0$ on $[0,2\pi)$. 
\end{lyxlist}
\end{proof}
\begin{thm}
\label{thm:x1=00003Dx}Given $(x,0),(x,u)\in\mathbb{R}^{n}\times\mathbb{R},\; x\neq0,\; u>0$,
there are finitely many geodesics connecting $(x,0),\;(x,u)$. Namely:
\begin{lyxlist}{00.}
\item [{\textbf{1.}}] For any solution $b$ of \begin{equation}
\frac{u}{\left|x\right|^{2}}=\hat{\mu}\left(\frac{b}{2}\right)\label{eq:hat mu solution}\end{equation}
there is a geodesic $\gamma^{b}$ given by \eqref{eq:geo1} with square
of length\begin{equation}
L(\gamma^{b})^{2}=\frac{b^{2}}{\cos^{2}\frac{b}{2}}\left|x\right|^{2}=2bu-2\left|x\right|^{2}b\tan\frac{b}{2},\end{equation}
which strictly increases with $b$.
\item [{\textbf{2.}}] If $u\geq\left|x\right|^{2}\pi$, then for each $m\in\mathbb{N}$
with $u\geq m\pi\left|x\right|^{2}$ there are geodesics $\gamma^{b,c}$
given by \eqref{eq:geo1} with $b=2m\pi$, $\left|c\right|=2\sqrt{m\pi u-(m\pi)^{2}\left|x\right|^{2}}$,
with lengths \begin{equation}
L(\gamma^{b,c})=2\sqrt{m\pi u}.\end{equation}

\end{lyxlist}
Moreover the shortest geodesic and therefore the Carnot-Carathéodory
distance is given by the unique solution $b$ of \eqref{eq:hat mu solution}
in the interval $(0,\pi)$, with this $b$ we have\begin{equation}
d_{CC}((x,0),(x,u))=\frac{b}{\cos\frac{b}{2}}|x|=\sqrt{2bu-2\left|x\right|^{2}b\tan\frac{b}{2}}.\end{equation}

\end{thm}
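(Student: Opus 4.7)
Following the template of Theorem~\ref{thm:allg geo} but in the boundary case $x_{1}=x$ (so $a=1$), I split into two subcases depending on whether $\sin b=0$. If $\sin b\neq0$, the derivation of Lemma~\ref{lem:allg geo gln} applies verbatim (the assumption $x_{1}\neq\pm x$ there was only used to guarantee $\sin b\neq 0$); with $x_{1}=x$ it yields $c=xb\tan(b/2)$ and, by Lemma~\ref{lem:convex comb mu} with $a=1$, the equation $\hat\mu(b/2)=u/|x|^{2}$, i.e.\ \eqref{eq:hat mu solution}. The length formula \eqref{eq:length} simplifies, via $\cot b-1/\sin b=-\tan(b/2)$ and $1-\cos b=2\sin^{2}(b/2)$, to $L^{2}=|x|^{2}b^{2}/\cos^{2}(b/2)=2bu-2|x|^{2}b\tan(b/2)$. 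This is item~1. If $\sin b=0$, the condition $\gamma^{b,c}_{(1)}(1)=x$ with $x\neq0$ forces $\cos b=1$, hence $b=2m\pi$; then $\gamma^{b,c}_{(2)}(1)=u$ determines $|c|^{2}=4m\pi u-4m^{2}\pi^{2}|x|^{2}$, which is nonnegative iff $u\geq m\pi|x|^{2}$, and \eqref{eq:geo1-length} gives $L=2\sqrt{m\pi u}$. This is item~2. Finiteness is immediate for item~2 (from $m\leq u/(\pi|x|^{2})$), and for item~1 follows from Lemma~\ref{lem:hat mu}, since the valley minima $\hat\mu(\hat b_{m})\geq(m+1/2)\pi\to\infty$ force only finitely many valleys of $\hat\mu$ to meet the horizontal line at height $u/|x|^{2}$.

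\textbf{Strict monotonicity of item-1 lengths in $b$ (the main obstacle).} Write $g(b)=b^{2}/\cos^{2}(b/2)$ and $f(b)=\hat\mu(b/2)$. A direct computation yields the key identity $g'(b)=2bf'(b)$, paralleling the identity $l'(b)=b\mu'(b)$ from the proof of Theorem~\ref{thm:allg geo}. Within one valley $((2m+1)\pi,(2m+3)\pi)$ of $f$, if $b_{1}<b_{2}$ are two solutions of $f=v$, integration by parts gives $g(b_{2})-g(b_{1})=2v(b_{2}-b_{1})-2\int_{b_{1}}^{b_{2}}f(b)\,db>0$, since $f<v$ strictly on the open interval. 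For the cross-singularity step I parametrize each branch as $b=b(v)$ so that $dg/dv=g'/f'=2b$ along it. At the threshold $v=\hat\mu(\hat b_{m+1})=\hat b_{m+1}$ (using $\tan\hat b_{m+1}=-1/\hat b_{m+1}$) the new pair $b_{L,m+1},b_{R,m+1}$ emerges from $b=2\hat b_{m+1}$ with common value $g=4(\hat b_{m+1}^{2}+1)$; I show $g(b_{R,m})<4(\hat b_{m+1}^{2}+1)$ at the same $v$ by setting $y=b_{R,m}/2$ and writing $y^{2}/\cos^{2}y=y\hat b_{m+1}-y\tan y$ from $\hat\mu(y)=\hat b_{m+1}$, reducing the claim to $\hat b_{m+1}(y-\hat b_{m+1})<1+y\tan y$. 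The left side is negative ($y<(m+3/2)\pi<\hat b_{m+1}$); the right side is positive because $y\tan y=-1$ at $y=\hat b_{m}$ and $(y\tan y)'=(\sin(2y)/2+y)/\cos^{2}y>0$ for $y>1/2$. Once this initial gap is positive, the identity $dg/dv=2b$ together with $b_{L,m+1}>(2m+3)\pi>b_{R,m}$ forces the gap to widen for all larger $v$. The transition from the main branch $b_{0}\in[0,\pi)$ to $b_{L,0}$ is handled the same way and is easier, since $b_{0}<\pi<b_{L,0}$.

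\textbf{Shortest geodesic.} By monotonicity, the shortest item-1 geodesic corresponds to the smallest solution $b\in(0,\pi)$, which is unique by Lemma~\ref{lem:hat mu}. To compare with item-2: for $b\in(0,\pi)$, $L^{2}=2bu-2|x|^{2}b\tan(b/2)<2bu<2\pi u$ (since $\tan(b/2)>0$), whereas every item-2 length has $L^{2}=4m\pi u\geq 4\pi u$. Thus the item-1 solution in $(0,\pi)$ is strictly shorter than any item-2 geodesic, and the two equivalent expressions for $L(\gamma^{b})$ derived above give the claimed formulas for $d_{CC}((x,0),(x,u))$.
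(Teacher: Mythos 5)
Your proof is correct, and the overall architecture (classification into $\sin b\neq0$ vs.\ $\sin b=0$, the identity $g'(b)=2bf'(b)$, the within-valley integration by parts, and the final comparison $L^2<2\pi u\le 4m\pi u$) matches the paper's. Where you genuinely diverge is in the cross-valley step. The paper exploits the $m$-independent algebraic fact
\[
l(2\hat b_m)-2\hat b_m\,\hat\mu(\hat b_m)=-2\hat b_m\tan\hat b_m=2,
\]
so that after writing $l(s)=s\hat\mu(s/2)-\int_{2\hat b_m}^{s}\hat\mu(b/2)\,db+2$, the constant drops out of the difference $l(t_m)-l(s_{m+1})$, which then decomposes into three manifestly nonpositive terms with no need to compute anything at the bifurcation threshold. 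You instead parametrize each monotone branch by the level $v$, observe $dg/dv=2b$, and verify positivity of the gap $g(b_{L,m+1})-g(b_{R,m})$ at the birth value $v=\hat b_{m+1}$ by reducing it to $\hat b_{m+1}(y-\hat b_{m+1})<1+y\tan y$; the identity $\hat\mu(\hat b_{m+1})=\hat b_{m+1}$ and the sign analysis you give are correct, as is the widening argument from $b_{L,m+1}>(2m+3)\pi>b_{R,m}$. Both routes work. The paper's is shorter because the constancy trick eliminates the threshold calculation entirely; yours is somewhat more labor-intensive at the bifurcation point but makes the branch structure and the ``derivative rate $2b$'' picture explicit, which arguably explains \emph{why} the lengths are ordered by $b$. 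One small bonus of your version: since $f<v$ strictly on the open interval between two in-valley roots, you get strict monotonicity directly, whereas the paper's displayed inequalities are written with $\le$ (though they are in fact strict for $s\neq t$).
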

\begin{figure}[ht]
\centering
\mypsteps{\includegraphics{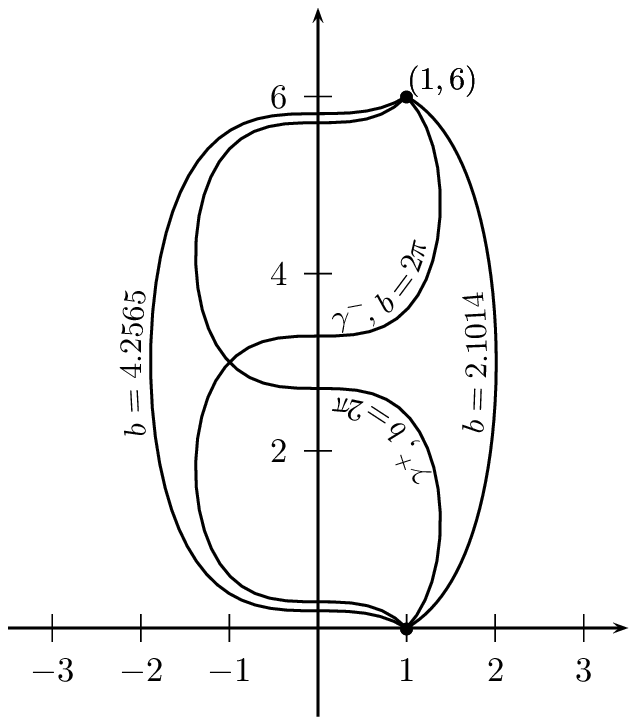}}{
\psset{yunit=.9,xunit=.9,VarStep=true,algebraic=true}
\pspicture*(-3.5,-1)(3.5,7)
%start (x,0) ende (y,u)
\def\geo#1#2#3{(#2*sin(#1*t)/#1+#3*cos(#1*t))|(#2^2*(t/2-sin(2*#1*t)/4/#1)/#1+#3*#2*sin(#1*t)^2/#1+#3^2*#1*(t/2+sin(2*#1*t)/4/#1))} %allg geo param b,c,x
\def\geoplot#1#2#3#4{%plot geo x,y,u,b
\psdots(#1,0)(#2,#3)
\rput[lb](#2,#3){\small{$(#2,#3)$}}
\pstextpath[c](0,.1){
\parametricplot{0}{1}{\geo{#4}{(#2*#4/sin(#4)-#1*#4*cos(#4)/sin(#4))}{#1}}}
{\small{$b=#4$}}}
\geoplot{1} 1 {6} {2.1014}
\geoplot{1} 1 {6} {4.2565}
\pstextpath[c](-1.9,.1){\parametricplot{0}{1}{\geo{6.2832}{5.99331}{1.0}}}{\small{$\gamma^+,\;b=2\pi$}}
\pstextpath[c](1.9,.1){\parametricplot{0}{1}{\geo{6.2832}{(-5.99331)}{1.0}}}{\small{$\gamma^-,\;b=2\pi$}}
\psaxes[ticks=all,dx=1,Dx=1,dy=2,Dy=2]{->}(0,0)(-3.5,-1)(3.5,7)
\endpspicture
}
\caption{Geodesics joining $(1,0)$ and $(1,6)$ with different $b\not\in\mathbb Z^\ast$ vs. the two geodesics with $b=2\pi$ and $c=\pm\sqrt{2bu-x_1^2b^2}$.}
\end{figure}

\begin{proof}
Notice that in this case $a=1$. We will consider two cases:
\begin{lyxlist}{00}
\item [{\textbf{1.~case~$b\in\pi\mathbb{N}$:}}] ~\\
Then since $\gamma_{(1)}^{b,c}(1)=x$, there exists $m\in\mathbb{N}$
with $b=2m\pi$. Then $\gamma_{(2)}^{b,c}(1)=u$ is equivalent to
\begin{align*}
u & =\left(\left|c\right|^{2}+(2m\pi)^{2}\left|x\right|^{2}\right)\frac{1}{4m\pi}.\end{align*}
So if $u\geq m\pi\left|x\right|^{2}$, there is a geodesic given by
\eqref{eq:geo1} with \[
b=2m\pi,\quad\left|c\right|=2\sqrt{m\pi u-(m\pi)^{2}\left|x\right|^{2}},\]
with length:\[
L^{2}(\gamma^{b,c})=\left|c\right|^{2}+\left|x\right|^{2}b^{2}=4m\pi u\]
This gives the second part of the theorem.
\item [{\textbf{2.~case~$b\not\in\pi\mathbb{N}$:}}] ~\\
In this case we have\[
\mu(b)=\hat{\mu}\left(\frac{b}{2}\right).\]
This is very similar to the situation in Theorem \ref{thm:allg geo},
but much easier, since we are only dealing with $\hat{\mu}$. From
Lemma \ref{lem:hat mu} we know that the equation \[
\frac{u}{\left|x\right|^{2}}=\hat{\mu}\left(\frac{b}{2}\right)\]
has at most two solutions in an interval $\left((2m+1)\pi,(2m+3)\pi\right),\; m\in\mathbb{N}_{0}$,
each defining a geodesic. If $s$ is a solution of this equation,
the square of the length of the corresponding geodesic $\gamma$ is
given by\begin{align*}
L^{2}(\gamma) & =2l(s)\left|x\right|^{2}\\
 & =2\frac{s^{2}}{\sin^{2}s}\left(1-\cos s\right)\left|x\right|^{2}\\
 & =\frac{s^{2}}{\cos^{2}\frac{s}{2}}\left|x\right|^{2}.\end{align*}
If $s\in((2m+1)\pi,(2m+3)\pi),\; m\in\mathbb{N}_{0}$ , we have\begin{align}
l(s) & =\int_{2\hat{b}_{m}}^{s}l'(b)db+l\left(2\hat{b}_{m}\right)\nonumber \\
 & =\int_{2\hat{b}_{m}}^{s}b\mu'(b)db+l\left(2\hat{b}_{m}\right)\nonumber \\
 & =\int_{2\hat{b}_{m}}^{s}\frac{b}{2}\hat{\mu}'\left(\frac{b}{2}\right)db+l\left(2\hat{b}_{m}\right)\nonumber \\
 & =s\hat{\mu}\left(\frac{s}{2}\right)-2\hat{b}_{m}\hat{\mu}\left(\hat{b}_{m}\right)-\int_{2\hat{b}_{m}}^{s}\hat{\mu}\left(\frac{b}{2}\right)db+l\left(2\hat{b}_{m}\right)\label{eq:hat l s=..}\end{align}
But now \begin{align}
l(2\hat{b}_{m})-2\hat{b}_{m}\hat{\mu}(\hat{b}_{m}) & =\frac{2b_{m}^{2}}{\cos^{2}\hat{b}_{m}}-2\hat{b}_{m}\left(\frac{\hat{b}_{m}}{\cos^{2}\hat{b}_{m}}+\tan\hat{b}_{m}\right)\nonumber \\
 & =-2\hat{b}_{m}\tan\hat{b}_{m}\nonumber \\
 & =2.\label{eq:hat l-b mu=2}\end{align}
First let $s\leq t$ be two solutions in an interval $\left((2m+1)\pi,(2m+3)\pi\right),\; m\in\mathbb{N}_{0}$.
Then by \eqref{eq:hat l s=00003D..} and \eqref{eq:hat l-b mu=00003D2}\begin{align*}
l(s)-l(t) & =s\hat{\mu}(\frac{s}{2})-\int_{2\hat{b}_{m}}^{s}\hat{\mu}\left(\frac{b}{2}\right)db-t\hat{\mu}\left(\frac{t}{2}\right)+\int_{2\hat{b}_{m}}^{t}\hat{\mu}\left(\frac{b}{2}\right)db\\
 & =-(t-s)\hat{\mu}\left(\frac{s}{2}\right)+\int_{s}^{t}\hat{\mu}\left(\frac{b}{2}\right)db\\
 & \leq0.\end{align*}
Now let $t_{m}\in((2m+1)\pi,(2m+3)\pi)$ and $s_{m+1}\in((2m+3)\pi,(2m+5)\pi)$
be two solutions, where $t_{m}$ is the rightmost and $s_{m+1}$ is
the leftmost solution in the corresponding interval. Then by \eqref{eq:hat l s=00003D..}
and \eqref{eq:hat l-b mu=00003D2}\begin{eqnarray*}
\lefteqn{l(t_{m})-l(s_{m+1})}\\
 & = & t_{m}\hat{\mu}\left(\frac{t_{m}}{2}\right)-\int_{2\hat{b}_{m}}^{t_{m}}\hat{\mu}\left(\frac{b}{2}\right)db-s_{m+1}\hat{\mu}\left(\frac{s_{m+1}}{2}\right)+\int_{2\hat{b}_{m+1}}^{s_{m+1}}\hat{\mu}\left(\frac{b}{2}\right)db\\
 & = & -\left(s_{m+1}-t_{m}\right)\hat{\mu}\left(\frac{t_{m}}{2}\right)-\int_{2\hat{b}_{m}}^{t_{m}}\hat{\mu}\left(\frac{b}{2}\right)db-\int_{s_{m+1}}^{2\hat{b}_{m+1}}\hat{\mu}\left(\frac{b}{2}\right)db\\
 & \leq & 0,\end{eqnarray*}
since $2\hat{b}_{m}\leq t_{m}$ and $s_{m+1}\leq2\hat{b}_{m+1}$.
\\
If $t\in(0,\pi)$ is the solution of \eqref{eq:hat mu solution},
we have to check, that $l(t)<l(s_{0})$. As above, we have \begin{alignat*}{1}
l(t) & =\int_{0}^{t}l'(b)db+l\left(0\right)\\
 & =t\hat{\mu}\left(\frac{t}{2}\right)-\int_{0}^{t}\hat{\mu}\left(\frac{b}{2}\right)db+2,\end{alignat*}
so that\begin{alignat*}{1}
l(t)-l(s_{0}) & =t\hat{\mu}\left(\frac{t}{2}\right)-\int_{0}^{t}\hat{\mu}\left(\frac{b}{2}\right)db-s_{0}\hat{\mu}\left(\frac{s_{0}}{2}\right)+\int_{2\hat{b}_{0}}^{s_{0}}\hat{\mu}\left(\frac{b}{2}\right)db\\
 & =-(s_{0}-t)\hat{\mu}\left(\frac{t}{2}\right)-\int_{0}^{t}\hat{\mu}\left(\frac{b}{2}\right)db-\int_{s_{0}}^{2\hat{b}_{0}}\hat{\mu}\left(\frac{b}{2}\right)db\\
 & \leq0.\end{alignat*}

\end{lyxlist}
At last we have to compare the length of the shortest geodesic of
case 1 (if there is one) with that of case 2. Let $b\in(0,\pi)$ be
the solution of \eqref{eq:hat mu solution}, then since $\tan\frac{b}{2}\geq0$:\begin{align*}
L^{2}(\gamma^{b}) & =2bu-2\left|x\right|^{2}b\tan\frac{b}{2}\\
 & \leq2\pi u\\
 & \leq L^{2}(\gamma^{2\pi,c}).\end{align*}

\end{proof}
\begin{thm}
\label{thm:x1=00003D-x}Given $(x,0),(-x,u)\in\mathbb{R}^{n}\times\mathbb{R},\; x\neq0,\; u>0$,
there are finitely many geodesics connecting $(x,0),\;(-x,u)$. Namely:
\begin{lyxlist}{00.}
\item [{\textbf{1.}}] For any solution $b$ of \begin{equation}
\frac{u}{\left|x\right|^{2}}=\tilde{\mu}\left(\frac{b}{2}\right)\label{eq:tilde mu solution}\end{equation}
there is a geodesic $\gamma^{b}$ given by \eqref{eq:geo1} with square
of length\begin{equation}
L(\gamma^{b})^{2}=\frac{b^{2}}{\sin^{2}\frac{b}{2}}\left|x\right|^{2}=2bu+\left|x\right|^{2}b\cot\frac{b}{2},\end{equation}
which strictly increases with $b$.
\item [{\textbf{2.}}] If $2u\geq\left|x\right|^{2}\pi$, then for each
$m\in\mathbb{N}_{0}$ with $2u\geq(2m+1)\pi\left|x\right|^{2}$ there
are geodesics $\gamma^{b,c}$ given by \eqref{eq:geo1} with $b=(2m+1)\pi$,
$\left|c\right|=\sqrt{2(2m+1)\pi u-(2m+1)^{2}\pi^{2}\left|x\right|^{2}}$
and lengths \begin{equation}
L(\gamma^{b,c})=\sqrt{2(2m+1)\pi u}.\end{equation}

\end{lyxlist}
Moreover the shortest geodesic and therefore the Carnot-Carathéodory
distance is given by the unique solution $b$ of \eqref{eq:tilde mu solution}
in the interval $(0,\pi)$, if $2u<\left|x\right|^{2}\pi$. With this
$b$ we have:\begin{equation}
d_{CC}((x,0),(-x,u))=\frac{b}{\sin\frac{b}{2}}|x|=\sqrt{2bu+\left|x\right|^{2}b\cot\frac{b}{2}}.\end{equation}
And, if $2u\geq\left|x\right|^{2}\pi$, then the Carnot-Carathéodory
distance is given by\[
d_{CC}\left((x,0),(-x,u)\right)=\sqrt{2\pi u}.\]

\end{thm}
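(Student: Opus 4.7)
The approach mirrors the proof of Theorem~\ref{thm:x1=00003Dx}. In the notation of Lemma~\ref{lem:allg geo gln}, the configuration $x_{\mathrm{start}} = x$, $x_{\mathrm{end}} = -x$ yields $R^2 = 2|x|^2$ and $a = 2x\cdot(-x)/(2|x|^2) = -1$, so Lemma~\ref{lem:convex comb mu} collapses to $\mu(b) = \tilde\mu(b/2)$. Hence equation \eqref{eq:mu} becomes $u/|x|^2 = \tilde\mu(b/2)$, and using $1 + \cos b = 2\cos^2(b/2)$ the first length expression in \eqref{eq:length} simplifies to $L(\gamma^b)^2 = b^2|x|^2/\sin^2(b/2)$, the second then following from the constraint.

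For the geodesics with $b \in \pi\mathbb{Z}^{\ast}$ (Part~2), the condition $\gamma_{(1)}^{b,c}(1) = x\cos b + (c/b)\sin b = -x$ combined with $\sin b = 0$ forces $\cos b = -1$, whence $b = (2m+1)\pi$ for some $m \in \mathbb{N}_0$. Since then $\sin 2b = \sin b = 0$, the condition $\gamma_{(2)}^{b,c}(1) = u$ reduces to $|c|^2 = 2bu - |x|^2 b^2$, solvable iff $2u \geq (2m+1)\pi|x|^2$, with length $L^2 = |c|^2 + |x|^2 b^2 = 2bu = 2(2m+1)\pi u$, minimised at $m = 0$.

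Monotonicity of lengths among Part~1 solutions follows by the substitution $s = b/2$: the equation becomes $\tilde\mu(s) = u/|x|^2$ and $L^2 = 4s^2|x|^2/\sin^2 s$, precisely the Heisenberg-group setup analysed in \cite{Beals2000}. Alternatively, one can mimic the proof of Theorem~\ref{thm:allg geo}, using $l'(b) = b\mu'(b)$ (valid for every $a$), integration by parts, strict convexity of $\mu = \tilde\mu(\cdot/2)$ on each $(2m\pi, 2(m+1)\pi)$, and the $a = -1$ specialisation $l(b) - b\mu(b) = 1 + \delta(b)$ at the critical points of $\mu$. In particular, the unique solution $b \in (0, 2\pi)$ gives the shortest Part~1 geodesic.

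The main subtlety is comparing the Part~1 minimum with the Part~2 minimum $\sqrt{2\pi u}$. When $2u < \pi|x|^2$, Part~2 is vacuous and the unique $b \in (0,\pi)$ (note $\tilde\mu(\pi/2) = \pi/2$) is the winner. When $2u \geq \pi|x|^2$, the shortest Part~1 solution has $b \in [\pi, 2\pi)$, and substituting $u = |x|^2 \tilde\mu(b/2)$ yields
\[
L(\gamma^b)^2 - 2\pi u \;=\; \frac{|x|^2}{\sin^2(b/2)}\bigl[\, b(b-\pi) + \pi\sin b \,\bigr].
\]
The bracket vanishes at $b = \pi$; its first derivative $2b - \pi + \pi\cos b$ also vanishes there, and its second derivative $2 - \pi\sin b$ is strictly positive on $[\pi, 2\pi]$. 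Hence the bracket is nonnegative on $[\pi, 2\pi)$, so Part~2 with $m = 0$ realises the Carnot--Carathéodory distance $\sqrt{2\pi u}$ in this regime. This final trigonometric inequality is the only genuinely new computation; everything else parallels Theorem~\ref{thm:x1=00003Dx}.
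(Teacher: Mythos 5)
Your proof is correct and follows essentially the same structure as the paper's: specialize to $a=-1$ so that $\mu(b)=\tilde\mu(b/2)$, split on $b\in\pi\mathbb N$ versus $b\notin\pi\mathbb N$, use $l'(b)=b\mu'(b)$ and the identity $l(2\tilde b_m)-2\tilde b_m\tilde\mu(\tilde b_m)=2$ (which is exactly your $1+\delta$ evaluated at a critical point) to get monotonicity of lengths, and finally compare the shortest Part~1 length with $2\pi u$ when $b\geq\pi$. The only substantive deviation is in that last comparison: the paper rewrites $L^2(\gamma^b)=2u\,b^2/(b-\sin b)$ and shows the factor $b^2/(b-\sin b)$ is increasing on $[\pi,2\pi]$ with value $\pi$ at $b=\pi$, whereas you compute $L^2-2\pi u=\frac{|x|^2}{\sin^2(b/2)}\bigl[b(b-\pi)+\pi\sin b\bigr]$ and show the bracket is nonnegative via $g(\pi)=g'(\pi)=0$ and $g''>0$ on $[\pi,2\pi]$ — an equally clean argument. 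Your remark that the $a=-1$ geodesic equation reduces via $s=b/2$ to the exact Heisenberg-group analysis of Beals--Gaveau--Greiner is a nice shortcut the paper alludes to but does not state as crisply.
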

\begin{figure}[ht]
\centering
\mypsteps{\includegraphics{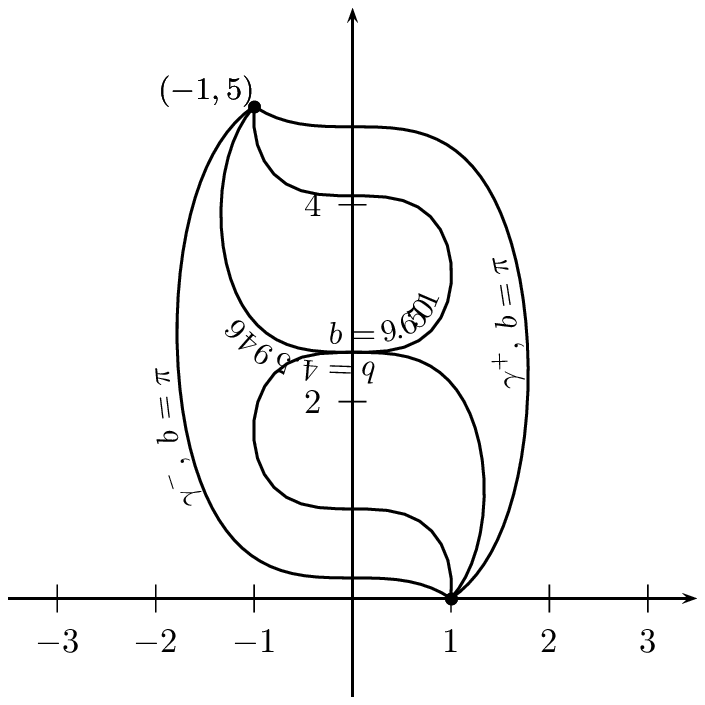}}{
\psset{yunit=1,xunit=1,VarStep=true,algebraic=true}
\pspicture*(-3.5,-1)(3.5,6)
%start (x,0) ende (y,u)
\def\geo#1#2#3{(#2*sin(#1*t)/#1+#3*cos(#1*t))|(#2^2*(t/2-sin(2*#1*t)/4/#1)/#1+#3*#2*sin(#1*t)^2/#1+#3^2*#1*(t/2+sin(2*#1*t)/4/#1))} %allg geo param b,c,x
\def\geoplot#1#2#3#4{%plot geo x,y,u,b
\psdots(#1,0)(#2,#3)
\rput[rb](#2,#3){\small{$(#2,#3)$}}
\pstextpath[c](0.5,.1){
\parametricplot{0}{1}{\geo{#4}{(#2*#4/sin(#4)-#1*#4*cos(#4)/sin(#4))}{#1}}}
{\small{$b=#4$}}}
%\pstextpath[c](-1.9,.1){\parametricplot[linecolor=gray]{0}{1}{\geo{9.4248}{2.3284}{1.0}}}{\small{$\gamma^+,\;b=3\pi$}}
%\pstextpath[c](-1.9,.1){\parametricplot[linecolor=gray]{0}{1}{\geo{9.4248}{(-2.3284)}{1.0}}}{\small{$\gamma^-,\;b=3\pi$}}
\geoplot{1}{-1}{5} {4.5946}
\geoplot{1}{-1}{5} {9.6501}
\pstextpath[c](-0.5,.1){\parametricplot{0}{1}{\geo{3.1416}{4.6418}{1.0}}}{\small{$\gamma^+,\;b=\pi$}}
\pstextpath[c](0,.1){\parametricplot{0}{1}{\geo{3.1416}{(-4.6418)}{1.0}}}{\small{$\gamma^-,\;b=\pi$}}
\psaxes[ticks=all,dx=1,Dx=1,dy=2,Dy=2]{->}(0,0)(-3.5,-1)(3.5,6)
\endpspicture
}
\caption{Geodesics joining $(1,0)$ and $(1,5)$ with different $b\not\in\mathbb Z^\ast$ vs. two geodesics with $b=\pi$ and $c=\pm\sqrt{2bu-x_1^2b^2}$.}
\end{figure}

\begin{proof}
Notice that in this case $a=-1$. The first part is the same proof
as above, if one replaces $\hat{\mu}$, $\hat{b}$ with $\tilde{\mu}$,
$\tilde{b}$. In particular:
\begin{lyxlist}{00}
\item [{\textbf{1.~case~$b\in\pi\mathbb{N}$:}}] ~\\
Then since $\gamma_{(1)}(1)=x$, there exists $m\in\mathbb{N}_{0}$
with $b=(2m+1)\pi$. Then ${\gamma_{(2)}(1)=u}$ is equivalent to
\begin{align*}
u & =\left(\left|c\right|^{2}+(2m+1)^{2}\pi^{2}\left|x\right|^{2}\right)\frac{1}{2(2m+1)\pi}.\end{align*}
So if $2u\geq(2m+1)\pi\left|x\right|^{2}$, there are geodesics $\gamma^{b,c}$
given by \eqref{eq:geo1} with ${b=(2m+1)\pi}$, $\left|c\right|=\sqrt{2(2m+1)\pi u-(2m+1)^{2}\pi^{2}\left|x\right|^{2}}$,
with length\[
L^{2}(\gamma^{b,c})=\left|c\right|^{2}+\left|x\right|^{2}b^{2}=2(2m+1)\pi u\]
This gives the second part of the theorem.
\item [{\textbf{2.~case~$b\not\in\pi\mathbb{N}$:}}] ~\\
In this case we have:\[
\mu(b)=\tilde{\mu}\left(\frac{b}{2}\right).\]
This is again very similar to the situation in Theorem \ref{thm:allg geo},
but a lot simpler, since we are only dealing with $\tilde{\mu}$.
From Lemma \ref{lem:tilde mu} we know that the equation \begin{equation}
\frac{u}{\left|x\right|^{2}}=\tilde{\mu}\left(\frac{b}{2}\right)\label{eq:equation tilde mu}\end{equation}
has at most two solutions in an interval $(2m\pi,(2m+2)\pi),\; m\in\mathbb{N}$,
each defining an geodesic. If $s$ is a solution of this equation,
the square of the length of the corresponding geodesic $\gamma$ is
given by\begin{align*}
L^{2}(\gamma) & =2l(s)\left|x\right|^{2}\\
 & =2\frac{s^{2}}{\sin^{2}s}\left(1+\cos s\right)\left|x\right|^{2}\\
 & =\frac{s^{2}}{\sin^{2}\frac{s}{2}}\left|x\right|^{2}.\end{align*}
If $s\in(2m\pi,(2m+2)\pi),\; m\in\mathbb{N}$, we have:\begin{align*}
l(s) & =\int_{2\tilde{b}_{m}}^{s}l'(b)db+l\left(2\tilde{b}_{m}\right)\\
 & =\int_{2\tilde{b}_{m}}^{s}b\mu'(b)db+l\left(2\tilde{b}_{m}\right)\\
 & =\int_{2\tilde{b}_{m}}^{s}\frac{b}{2}\tilde{\mu}'\left(\frac{b}{2}\right)db+l\left(2\tilde{b}_{m}\right)\\
 & =s\tilde{\mu}\left(\frac{s}{2}\right)-2\tilde{b}_{m}\tilde{\mu}\left(\tilde{b}_{m}\right)-\int_{2\tilde{b}_{m}}^{s}\tilde{\mu}\left(\frac{b}{2}\right)db+l\left(2\tilde{b}_{m}\right)\end{align*}
But now \begin{align*}
l(2\tilde{b}_{m})-2\tilde{b}_{m}\tilde{\mu}(\tilde{b}_{m}) & =\frac{2\tilde{b}_{m}^{2}}{\sin^{2}\tilde{b}_{m}}-2\tilde{b}_{m}\left(\frac{\tilde{b}_{m}}{\sin^{2}\tilde{b}_{m}}-\cot\tilde{b}_{m}\right)\\
 & =2\tilde{b}_{m}\cot\tilde{b}_{m}\\
 & =2.\end{align*}
First let $s\leq t$ be two solutions of \eqref{eq:equation tilde mu}
in an interval $(2m\pi,(2m+2)\pi),\; m\in\mathbb{N}$; then\begin{align*}
l(s)-l(t) & =s\tilde{\mu}\left(\frac{s}{2}\right)-\int_{2\tilde{b}_{m}}^{s}\tilde{\mu}\left(\frac{b}{2}\right)db-t\tilde{\mu}\left(\frac{t}{2}\right)+\int_{2\tilde{b}_{m}}^{t}\tilde{\mu}\left(\frac{b}{2}\right)db\\
 & =-(t-s)\tilde{\mu}\left(\frac{s}{2}\right)+\int_{s}^{t}\tilde{\mu}\left(\frac{b}{2}\right)db\\
 & \leq0.\end{align*}
Now let $t_{m}\in(2m\pi,(2m+2)\pi)$ and $s_{m+1}\in((2m+2)\pi,(2m+4)\pi)$
be two solutions, then\begin{eqnarray*}
\lefteqn{l(t_{m})-l(s_{m+1})}\\
 & = & t_{m}\tilde{\mu}\left(\frac{t_{m}}{2}\right)-\int_{2\tilde{b}_{m}}^{t_{m}}\tilde{\mu}\left(\frac{b}{2}\right)db-s_{m+1}\tilde{\mu}\left(\frac{s_{m+1}}{2}\right)+\int_{2\tilde{b}_{m+1}}^{s_{m+1}}\tilde{\mu}\left(\frac{b}{2}\right)db\\
 & = & -(s_{m+1}-t_{m})\tilde{\mu}\left(\frac{t_{m}}{2}\right)-\int_{2\tilde{b}_{m}}^{t_{m}}\tilde{\mu}\left(\frac{b}{2}\right)db-\int_{s_{m+1}}^{2\tilde{b}_{m+1}}\tilde{\mu}\left(\frac{b}{2}\right)db\\
 & \leq & 0,\end{eqnarray*}
since $2\tilde{b}_{m}\leq t_{m}$ and $s_{m+1}\leq2\tilde{b}_{m}$.
\\
If $t_{0}\in(0,2\pi)$ is the solution in the interval $(0,2\pi)$,
we have to show that $l(t_{0})<l(s_{1})$. But the above calculations
show that\begin{alignat*}{1}
l(t_{0}) & =\int_{0}^{t_{0}}l'(b)db+l\left(0\right)\\
 & =t_{0}\tilde{\mu}\left(\frac{t_{0}}{2}\right)-\int_{0}^{t_{0}}\tilde{\mu}\left(\frac{b}{2}\right)db+2.\end{alignat*}
So we get\begin{alignat*}{1}
l(t_{0})-l(s_{1}) & =t_{0}\tilde{\mu}\left(\frac{t_{0}}{2}\right)-\int_{0}^{t_{0}}\tilde{\mu}\left(\frac{b}{2}\right)db-s_{1}\tilde{\mu}\left(\frac{s_{1}}{2}\right)+\int_{2\tilde{b}_{1}}^{s_{1}}\tilde{\mu}\left(\frac{b}{2}\right)db\\
 & =-(s_{1}-t_{0})\tilde{\mu}\left(\frac{t_{0}}{2}\right)-\int_{0}^{t_{0}}\tilde{\mu}\left(\frac{b}{2}\right)db-\int_{s_{1}}^{2\tilde{b}_{1}}\tilde{\mu}\left(\frac{b}{2}\right)db\\
 & <0.\end{alignat*}

\end{lyxlist}
At last we have to compare the length of the shortest geodesic $\gamma^{\pi,c},\left|c\right|=\sqrt{2\pi u-\pi^{2}\left|x\right|^{2}}$
of case 1 (if there is one) with that of case 2. Let $b\in(0,2\pi)$
be the solution of \eqref{eq:tilde mu solution}. If $b<\pi$, then,
since $\tilde{\mu}$ is monotone increasing on $(0,\pi)$, we get:\begin{align*}
\frac{u}{\left|x\right|^{2}} & =\tilde{\mu}\left(\frac{b}{2}\right)\\
 & <\tilde{\mu}\left(\frac{\pi}{2}\right)\\
 & =\frac{\pi}{2}.\end{align*}
But this means, that $2u<\pi\left|x\right|^{2}$ and therefore there
is no extra geodesic. If now $b\geq\pi$, we have to show that $L^{2}(\gamma^{b})\geq2\pi u$.
We have\begin{alignat*}{1}
\frac{u}{\left|x\right|^{2}} & =\tilde{\mu}\left(\frac{b}{2}\right)\\
 & =\frac{\frac{b}{2}}{\sin^{2}\frac{b}{2}}-\cot\frac{b}{2}\\
 & =\cot\frac{b}{2}\left(\frac{b}{2\sin\frac{b}{2}\cos\frac{b}{2}}-1\right)\\
 & =\frac{b-\sin b}{\sin b}\cot\frac{b}{2},\end{alignat*}
so that\begin{alignat*}{1}
L^{2}\left(\gamma^{b}\right) & =2bu+2\left|x\right|^{2}b\cot\frac{b}{2}\\
 & =2bu\left(1+\frac{\sin b}{b-\sin b}\right)\\
 & =2u\frac{b^{2}}{b-\sin b}\\
 & \geq2\pi u.\end{alignat*}
The last inequality holds since $f(b):=\frac{b^{2}}{b-\sin b}\geq\pi$,
for all $b\in[\pi,2\pi]$. To see this, observe, that $f(\pi)=\pi$
and $f$ is monotone increasing:\begin{alignat*}{1}
f'(b) & =\frac{2b(b-\sin b)-b^{2}(1-\cos b)}{(b-\sin b)^{2}}\\
 & =\frac{b^{2}(1+\cos b)-2b\sin b}{(b-\sin b)^{2}}\\
 & \geq0.\end{alignat*}
This completes the proof, since the geodesics $\gamma^{b,c}$ in case
2 (with parameter $b=\pi$, $\left|c\right|=\sqrt{2\pi u-\pi^{2}x^{2}}$)
are shorter. 
\end{proof}
To close this section, we give a summarizing theorem about our results
of this section: 

\begin{thm}
Let $(x_{1},u_{1}),\;(x,u)\in\mathbb{R}^{n+1}$. If $x_{1}=-x$ and
$2\left|u-u_{1}\right|\geq\pi\left|x\right|^{2}$, then the Carnot-Carathéodory
distance is given by\begin{equation}
d_{CC}\left((x_{1},u_{1}),(x,u)\right)=\sqrt{2\pi\left|u-u_{1}\right|}.\label{eq:dcc x=-x1}\end{equation}
Otherwise the Carnot-Carathéodory distance is given by\begin{subequations}\begin{alignat}{1}
d_{CC}\left((x_{1},u_{1}),(x,u)\right) & =\frac{b}{\sin b}\sqrt{\left|x_{1}\right|^{2}+\left|x\right|^{2}-2x_{1}\cdot x\cos b}\label{eq:dcc1}\\
 & =\sqrt{2b(u-u_{1})+(\left|x_{1}\right|^{2}+\left|x\right|^{2})b\cot b-2x_{1}\cdot x\frac{b}{\sin b}}\label{eq:dcc2}\end{alignat}
\end{subequations}where $b\in(-\pi,\pi)$ is the unique solution
of\begin{equation}
\frac{2(u-u_{1})}{\left|x_{1}\right|^{2}+\left|x\right|^{2}}=\frac{b}{\sin^{2}b}-\cot b+2\frac{x_{1}\cdot x}{\left|x_{1}\right|^{2}+\left|x\right|^{2}}\cdot\frac{1-b\cot b}{\sin b}\label{eq:solution new}\end{equation}
in the interval $(-\pi,\pi)$. 
\end{thm}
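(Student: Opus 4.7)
The plan is to reduce to the three classification theorems already established in this section: the generic case $x \neq \pm x_1$ via Theorem~\ref{thm:allg geo}, the diagonal case $x = x_1$ via Theorem~\ref{thm:x1=00003Dx}, and the antidiagonal case $x = -x_1$ via Theorem~\ref{thm:x1=00003D-x}. Two preliminary reductions prepare the ground. First, as noted earlier in Section~\ref{sec:Geometry-introduced-by}, translation in the $u$-direction is a length-preserving symmetry, so one may replace $(x_1,u_1),(x,u)$ by $(x_1,0),(x,u-u_1)$; set $v := u-u_1$. Second, the parity relations $\gamma_{(1)}^{b,c} = \gamma_{(1)}^{-b,c}$ and $\gamma_{(2)}^{b,c} = -\gamma_{(2)}^{-b,c}$ imply that a geodesic joining $(x_1,0)$ to $(x,-v)$ with parameter $b$ corresponds to one joining $(x_1,0)$ to $(x,v)$ with parameter $-b$ of equal length. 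Thus it suffices to produce a solution of \eqref{eq:solution new} in $(0,\pi)$ when $v>0$; the formulation on $(-\pi,\pi)$ merely absorbs the sign of $v$, and \eqref{eq:dcc1}--\eqref{eq:dcc2} are invariant under $b \mapsto -b$ once paired with the sign of $v$ implicit in \eqref{eq:solution new}.

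Next I split into cases. If $x \neq \pm x_1$ and $v > 0$, Theorem~\ref{thm:allg geo} directly yields the unique $b \in (0,\pi)$ solving \eqref{eq:solution new} together with the formulas \eqref{eq:dcc1}--\eqref{eq:dcc2}. If $x = x_1 \neq 0$ and $v > 0$, then $a = 1$ and Lemma~\ref{lem:convex comb mu} collapses \eqref{eq:solution new} to $v/|x|^2 = \hat\mu(b/2)$; Lemma~\ref{lem:hat mu} supplies a unique root in $(0,\pi)$, and Theorem~\ref{thm:x1=00003Dx} identifies the corresponding $\gamma^b$ with the shortest geodesic. If $x = -x_1 \neq 0$ and $2v < \pi|x|^2$, then $a = -1$ and Lemma~\ref{lem:convex comb mu} reduces \eqref{eq:solution new} to $v/|x|^2 = \tilde\mu(b/2)$; since $\tilde\mu(\pi/2) = \pi/2$, Lemma~\ref{lem:tilde mu} provides a unique root in $(0,\pi)$, and Part~1 of Theorem~\ref{thm:x1=00003D-x} delivers \eqref{eq:dcc1}--\eqref{eq:dcc2}. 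If $x = -x_1$ and $2v \geq \pi|x|^2$ (a case that subsumes $x = x_1 = 0$, matching the earlier value $d_{CC}((0,0),(0,u)) = \sqrt{2\pi u}$), Part~2 of Theorem~\ref{thm:x1=00003D-x} yields $d_{CC} = \sqrt{2\pi v}$, i.e.\ \eqref{eq:dcc x=-x1}. The degenerate case $v = 0$ with arbitrary $x_1, x$ is handled by the very first theorem of the section, giving $d_{CC} = |x - x_1|$, which is also the $b \to 0$ limit of \eqref{eq:dcc1}.

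The main bookkeeping obstacle is reconciling the distance formulas of the auxiliary theorems with the specialisation of \eqref{eq:dcc1}--\eqref{eq:dcc2}. For $x = x_1$ one expands $|x_1|^2 + |x|^2 - 2 x_1 \cdot x\cos b = 2|x|^2(1 - \cos b) = 4|x|^2 \sin^2(b/2)$ and uses $\sin b = 2\sin(b/2)\cos(b/2)$ to rewrite $\tfrac{b}{\sin b}\sqrt{\,\cdot\,}$ as $\tfrac{b}{|\cos(b/2)|}|x|$, matching Theorem~\ref{thm:x1=00003Dx}; for $x = -x_1$ the symmetric identity $1 + \cos b = 2\cos^2(b/2)$ yields $\tfrac{b}{|\sin(b/2)|}|x|$, matching Theorem~\ref{thm:x1=00003D-x}. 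Once these elementary trigonometric substitutions are in place, the summarising theorem is a straightforward concatenation of the three classification results already proved.
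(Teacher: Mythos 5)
The paper states this summarizing theorem without a written proof, leaving it as an assembly of the preceding results; your proposal is exactly that assembly spelled out, and it is correct. The two preliminary reductions (translation invariance in $u$, the parity $\gamma^{b,c}_{(1)}=\gamma^{-b,c}_{(1)}$, $\gamma^{b,c}_{(2)}=-\gamma^{-b,c}_{(2)}$ to absorb the sign of $v=u-u_1$ into $b\in(-\pi,\pi)$) match what the paper announces before its case-by-case theorems, the case split onto Theorem \ref{thm:allg geo} ($x\neq\pm x_1$), Theorem \ref{thm:x1=00003Dx} ($x=x_1$, via $a=1$ and $\mu(b)=\hat\mu(b/2)$ from Lemma \ref{lem:convex comb mu}), Theorem \ref{thm:x1=00003D-x} ($x=-x_1$, via $a=-1$ and $\mu(b)=\tilde\mu(b/2)$), and the first theorem of the section for $v=0$ is complete, and the half-angle reconciliations $|x_1|^2+|x|^2\mp2x_1\cdot x\cos b = 4|x|^2\bigl(\begin{smallmatrix}\sin^2\\ \cos^2\end{smallmatrix}\bigr)(b/2)$ are carried out correctly. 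In short, this is the proof the paper implicitly intends, made explicit.
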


\section{Heat kernel}

We now turn our attention to the heat equation, i.e. the Cauchy problem\begin{equation}
\begin{cases}
(\partial_{t}-G)v(t,x,u)=0\\
v(0,x,u)=f(x,u)\end{cases}\label{eq:heat eqn}\end{equation}
 with some function $f\in\mathcal{S}(\mathbb{R}^{n+1})$. Taking the
Fourier-transform in the $u$-variable we see that this is equivalent
to\begin{equation}
\begin{cases}
(\partial_{t}-H_{\lambda})w(t,x,\lambda)=0\\
w(0,x,\lambda)=g(x,\lambda)\end{cases},\label{eq:fourier heat eqn}\end{equation}
where $g(x,\lambda):=f(x,\hat{\lambda})$ denotes the partial Fourier-transform
of $f$ in the $u$-variable and $H_{\lambda}=\Delta_{x}-|\lambda|^{2}|x|^{2}$
denote the Hermite-operator with parameter $\lambda\in\mathbb{R}$,
which is the partial Fourier-transform of $G$. Thanks to the Plancherel
formula, any solution $w$ of \eqref{eq:fourier heat eqn} defines
a solution $v$ to \eqref{eq:heat eqn} by taking the inverse Fourier-transform,
and vice versa.

But the latter equation is well known and we give a brief discussion
of this equation. The solution of this equation is given by $e^{tH_{\lambda}}g$,
where $e^{tH_{\lambda}}$ is a bounded operator defined by the functional
calculus, since $H_{\lambda}$ is a self-adjoint operator. By the
Schwartz kernel theorem $e^{tH_{\lambda}}$ is given by an integral
kernel, i.e. \begin{equation}
e^{tH_{\lambda}}g(x)=\int_{\mathbb{R}^{n}}k_{t}^{\lambda}(x,\xi)g(\xi)d\xi.\end{equation}
And, since $H_{\lambda}$ is hypoelliptic, we have $k_{t}^{\lambda}\in C^{\infty}(\mathbb{R}^{2n})$.

The Hermite-functions\begin{equation}
h_{\alpha}(x):=\frac{2^{n/4}}{\sqrt{\alpha!}}\left(\frac{-1}{2\sqrt{\pi}}\right)^{|\alpha|}e^{\pi|x|^{2}}\left(\frac{\partial}{\partial x}\right)^{\alpha}e^{-2\pi|x|^{2}},\quad x\in\mathbb{R}^{n},\;\alpha\in\mathbb{N}_{0}^{n},\end{equation}
which give an orthonormal basis of $L^{2}(\mathbb{R}^{n})$, are eigenfunctions
of $H_{2\pi}$:\begin{equation}
H_{2\pi}h_{\alpha}=-2\pi\left(2|\alpha|+n\right)h_{\alpha}.\end{equation}
Since\begin{equation}
\left(\Delta-\mu^{2}|x|^{2}\right)f(r\cdot)=r^{2}\left(\left(\Delta-\left(\frac{\mu}{r^{2}}\right)^{2}|\cdot|^{2}\right)f\right)(r\cdot),\end{equation}
we can rescale the Hermite functions\begin{equation}
h_{\alpha}^{\lambda}(x):=\left(\frac{|\lambda|}{2\pi}\right)^{\frac{n}{4}}h_{\alpha}\left(\left(\frac{|\lambda|}{2\pi}\right)^{\frac{1}{2}}x\right)\end{equation}
to get an orthonormal basis $\left(h_{\alpha}^{\lambda}\right)_{\alpha\in\mathbb{N}_{0}^{n}}$
of eigenfunctions of $H_{\lambda}$:\begin{equation}
H_{\lambda}h_{\alpha}^{\lambda}=-|\lambda|\left(2|\alpha|+n\right)h_{\alpha}^{\lambda}.\end{equation}
Using Mehler's formula, we can calculate the kernel $k_{t}^{\lambda}$:\begin{alignat*}{1}
k_{t}^{\lambda}(x,\xi) & =\sum_{|\alpha|\geq0}e^{tH_{\lambda}}h_{\alpha}^{\lambda}(x)h_{\alpha}^{\lambda}(\xi)\\
 & =\sum_{|\alpha|\geq0}e^{-|\lambda|(2|\alpha|+n)t}\left(\frac{|\lambda|}{2\pi}\right)^{\frac{n}{2}}h_{\alpha}\left(\sqrt{\frac{|\lambda|}{2\pi}}x\right)h_{\alpha}\left(\sqrt{\frac{|\lambda|}{2\pi}}\xi\right)\\
 & =\left(\frac{|\lambda|}{2\pi}\right)^{\frac{n}{2}}e^{-n|\lambda|t}\sum_{|\alpha|\geq0}e^{-2|\lambda||\alpha|t}h_{\alpha}\left(\sqrt{\frac{|\lambda|}{2\pi}}x\right)h_{\alpha}\left(\sqrt{\frac{|\lambda|}{2\pi}}\xi\right)\\
 & =\left(\frac{|\lambda|}{2\pi}\right)^{\frac{n}{2}}e^{-n|\lambda|}\left(\frac{2}{1-e^{-4|\lambda|t}}\right)^{\frac{n}{2}}\\
 & \hspace{1cm}\cdot\exp\left(\frac{-\pi\left(1+e^{-4|\lambda|t}\right)\frac{\left|x\right|^{2}+\left|\xi\right|^{2}}{2\pi}|\lambda|+4\pi\frac{x\cdot\xi}{2\pi}|\lambda|e^{-2|\lambda|t}}{1-e^{-4|\lambda|t}}\right)\\
 & =(2\pi)^{-n/2}\left(\frac{|\lambda|}{\sinh(2|\lambda|t)}\right)^{\frac{n}{2}}\\
 & \hspace{1cm}\cdot\exp\left(-\frac{\left|x\right|^{2}+\left|\xi\right|^{2}}{2}|\lambda|\coth(2|\lambda|t)+\frac{|\lambda|}{\sinh(2|\lambda|t)}x\cdot\xi\right)\\
 & =(4\pi t)^{-n/2}\left(\frac{2\lambda t}{\sinh(2\lambda t)}\right)^{\frac{n}{2}}\\
 & \hspace{1cm}\cdot\exp\left(-\frac{1}{4t}\left(2\lambda t\coth(2\lambda t)(\left|x\right|^{2}+\left|\xi\right|^{2})-\frac{4\lambda t}{\sinh(2\lambda t)}x\cdot\xi\right)\right).\end{alignat*}
Now, by taking the Fourier inverse\begin{eqnarray}
\lefteqn{K_{t}(x,\xi,u)=(2\pi)^{-1}\int\limits _{-\infty}^{\infty}k_{t}^{\lambda}(x,\xi)e^{i\lambda u}d\lambda}\label{eq:Kernel}\\
\lefteqn{} & = & (2\pi)^{-1}(4\pi t)^{-n/2}\int\limits _{-\infty}^{\infty}\left(\frac{2\lambda t}{\sinh(2\lambda t)}\right)^{\frac{n}{2}}\nonumber \\
 &  & \hspace{0.5cm}\cdot\exp\left(-\frac{1}{4t}\left(2\lambda t\coth(2\lambda t)(\left|x\right|^{2}+\left|\xi\right|^{2})-\frac{4\lambda t}{\sinh(2\lambda t)}x\cdot\xi\right)\right)e^{i\lambda u}d\lambda\nonumber \\
 & = & (4\pi t)^{-n/2-1}\int\limits _{-\infty}^{\infty}\left(\frac{\lambda}{\sinh\lambda}\right)^{\frac{n}{2}}\nonumber \\
 &  & \hspace{0.5cm}\exp\left(-\frac{1}{4t}\left(\lambda\coth\lambda(\left|x\right|^{2}+\left|\xi\right|^{2})-\frac{2\lambda}{\sinh\lambda}x\cdot\xi-2i\lambda u\right)\right)d\lambda,\nonumber \end{eqnarray}
one gets the heat kernel for the Grušin operator. Now the solution
of \eqref{eq:heat eqn} is given by\begin{equation}
v(x,u)=\int_{\mathbb{R}}\int_{\mathbb{R}^{n}}K_{t}(x,\xi,u-\lambda)f(\xi,\lambda)d\xi d\lambda.\end{equation}

\section{Estimates}

We may now use the results of section \ref{sec:Geometry-introduced-by}
to give some estimates for the heat kernel. We set\begin{alignat}{1}
\lefteqn{}h(x,\xi,u) & :=\int\limits _{-\infty}^{\infty}\left(\frac{\lambda}{\sinh\lambda}\right)^{\frac{n}{2}}\exp\left(-\left(\lambda\coth\lambda(\left|x\right|^{2}+\left|\xi\right|^{2})-\frac{2\lambda}{\sinh\lambda}x\cdot\xi-2i\lambda u\right)\right)d\lambda\label{eq:simple kernel}\\
 & =\int\limits _{-\infty}^{\infty}\left(\frac{\lambda}{\sinh\lambda}\right)^{\frac{n}{2}}\exp\left(-\left(\lambda\coth\lambda-\frac{\lambda}{\sinh\lambda}a\right)R^{2}+2i\lambda u\right)d\lambda\nonumber \end{alignat}
with \[
R:=R(x,\xi):=\sqrt{\left|x\right|^{2}+\left|\xi\right|^{2}}\;\mbox{and}\; a:=a(x,\xi):=\frac{2x\cdot\xi}{R^{2}}\in\left[0,1\right].\]
Then\begin{equation}
K_{t}(x,\xi,u)=(4\pi t)^{-n/2-1}h\left(\frac{x}{2\sqrt{t}},\frac{\xi}{2\sqrt{t}},\frac{u}{4t}\right).\label{eq:h eq kt}\end{equation}
It is also convenient to set\begin{eqnarray}
V(\lambda) & := & \left(\frac{\lambda}{\sinh\lambda}\right)^{\frac{n}{2}}\label{eq:V lambda}\\
\psi(\lambda) & := & \psi(\lambda,a)\label{eq:psi lambda}\\
 & := & \lambda\coth\lambda-\frac{\lambda}{\sinh\lambda}a,\quad\lambda\in\mathbb{R}.\nonumber \end{eqnarray}
One may see that $\psi(ib)=b\cot b-\frac{b}{\sin b}a$, so that if
$b$ is a solution of \eqref{eq:solution new}, then the exponent\[
\left(\lambda\coth\lambda-\frac{\lambda}{\sinh\lambda}a\right)R^{2}-2i\lambda u\]
at $\lambda=ib$ gives exactly the square of the Carnot-Carathéodory
distance $d_{CC}((x,0),(\xi,u))^{2}$. So we expect Gaussian-type
estimates of the form\[
|h(x,\xi,u)|\leq F(x,\xi,u)e^{-d_{CC}((x,0),(\xi,u))^{2}},\]
where $F(x,\xi,u)>0$ is a function depending on $x,\xi,u$. It will
turn out that $F$ has polynomial growth, more precisely \[
F(x,\xi,u)\lesssim(1+d_{CC}((x,0),(\xi,u))^{2})^{\alpha},\]
where $\alpha=\frac{n}{2}-1$, if $n>2$ and $\alpha=0$ if $n\leq2$.
Compared to the euclidean case, where $\alpha=0$, one has some additional
growth, if the dimension is greater than $2$.

The strategy is as follows: Move the line of integration in \eqref{eq:simple kernel}
from the real axis to the line $\mathbb{R}+ib$, where $b$ is near
to the solution $b_{0}$ of \eqref{eq:solution new}.

\begin{lem}
The function\[
V(\lambda):=\left(\frac{\lambda}{\sinh\lambda}\right)^{\frac{n}{2}},\]
where the square root is the principal branch in $\mathbb{C}\setminus(-\infty,0]$,
is holomorphic in $\left\{ z\in\mathbb{C}:|\Im z|<\pi\right\} $.
And for $\nu+ib\in\mathbb{C}$, $-\pi<b<\pi$, one has\begin{eqnarray}
|V(\nu+ib)| & \leq & \left(\frac{b}{\sin b}\right)^{\frac{n}{2}},\label{eq:V bound small}\\
|V(\nu+ib)| & \leq & \left(1+\frac{b^{2}}{\nu^{2}}\right)^{\frac{n}{4}}\left(\frac{\nu}{\sinh\nu}\right)^{\frac{n}{2}}.\label{eq:V bound large}\end{eqnarray}

\end{lem}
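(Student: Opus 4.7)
The plan has two parts, holomorphicity and the two pointwise bounds, and both rest on the explicit identities
\[
|\nu+ib|^{2}=\nu^{2}+b^{2},\qquad |\sinh(\nu+ib)|^{2}=\sinh^{2}\nu+\sin^{2}b,
\]
which come from $\sinh(\nu+ib)=\sinh\nu\cos b+i\cosh\nu\sin b$ and $\cosh^{2}\nu=1+\sinh^{2}\nu$.

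For holomorphicity, I would first note that $\lambda\mapsto\lambda/\sinh\lambda$ is meromorphic on $\mathbb{C}$ with removable singularity at $\lambda=0$ (value $1$) and simple poles precisely at $ik\pi$, $k\in\mathbb{Z}\setminus\{0\}$. So on the strip $|\Im\lambda|<\pi$ it is holomorphic and nowhere zero. To conclude holomorphicity of $V=(\lambda/\sinh\lambda)^{n/2}$ via the principal branch, the one thing to verify is that $\lambda/\sinh\lambda$ stays out of the cut $(-\infty,0]$. The identity above shows $\Im\sinh(\nu+ib)=\cosh\nu\sin b$ has the same sign as $b$, and obviously so does $\Im(\nu+ib)$. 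Hence for $b\in(0,\pi)$ both $\nu+ib$ and $\sinh(\nu+ib)$ lie strictly in the open upper half plane, so their quotient has principal argument strictly inside $(-\pi,\pi)$ and cannot be a non-positive real. The cases $b=0$ (quotient is positive real) and $b\in(-\pi,0)$ (quotient is the complex conjugate of the $b>0$ case, using that $\lambda/\sinh\lambda$ is even and real on $\mathbb{R}$) are immediate.

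For the two bounds, the modulus identities give
\[
|V(\nu+ib)|^{2}=\left(\frac{\nu^{2}+b^{2}}{\sinh^{2}\nu+\sin^{2}b}\right)^{n/2}.
\]
Bound \eqref{eq:V bound small} then reduces, after clearing denominators, to $\nu^{2}\sin^{2}b\leq b^{2}\sinh^{2}\nu$, i.e.\ $(\sin b/b)^{2}\leq(\sinh\nu/\nu)^{2}$, which follows from the elementary inequalities $|\sin b/b|\leq 1\leq |\sinh\nu/\nu|$ valid for all real $b,\nu\neq 0$. Bound \eqref{eq:V bound large} is even easier: the right-hand side squared equals $\bigl((\nu^{2}+b^{2})/\sinh^{2}\nu\bigr)^{n/2}$, so the inequality reduces to $\sinh^{2}\nu\leq\sinh^{2}\nu+\sin^{2}b$, which is trivial.

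The only mildly delicate step is ruling out that $\lambda/\sinh\lambda$ crosses $(-\infty,0]$ inside the strip, since otherwise the principal $n/2$-power would fail to be globally holomorphic; the sign-of-imaginary-part argument sketched above handles this cleanly. Once this is settled, the two numerical bounds are essentially one-liners from the explicit formula for $|V(\nu+ib)|^{2}$.
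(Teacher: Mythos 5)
Your proof is correct, and it is worth comparing with the paper's because you handle the first bound in a genuinely cleaner way. Both you and the paper start from the identity
\[
|V(\nu+ib)|^{2}=\left(\frac{\nu^{2}+b^{2}}{\sinh^{2}\nu+\sin^{2}b}\right)^{n/2},
\]
but for \eqref{eq:V bound small} the paper sets $f(\nu)=\frac{\nu^{2}+b^{2}}{\sinh^{2}\nu+\sin^{2}b}$ and shows by a somewhat fussy computation that $f'(\nu)\leq 0$ for $\nu\geq 0$, with $f(0)=b^{2}/\sin^{2}b$ and $f$ even. Your cross-multiplication reduces the same inequality to $\nu^{2}\sin^{2}b\leq b^{2}\sinh^{2}\nu$, which falls out immediately from $|\sin b|\leq|b|$ and $|\nu|\leq|\sinh\nu|$; that is shorter, more transparent, and avoids any calculus. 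For holomorphicity, both proofs rest on the observation that $\Im\sinh(\nu+ib)=\cosh\nu\sin b$ carries the sign of $b$; the paper turns this into the algebraic dichotomy ``quotient is real iff $b\cot b=\nu\coth\nu$ or $\nu=0$'' and then compares $b\cot b\leq 1$ with $\nu\coth\nu>1$, while you make the cleaner geometric remark that for $b\in(0,\pi)$ numerator and denominator both lie in the open upper half-plane, so their ratio has principal argument strictly in $(-\pi,\pi)$ and cannot hit the cut; the $b=0$ and $b<0$ cases follow by positivity and conjugation. The second bound \eqref{eq:V bound large} you treat exactly as the paper does. In short: same skeleton, but your versions of the holomorphicity argument and of the first bound are strictly simpler, and I see no gap.
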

\begin{proof}
To show that $V$ is holomorphic, we have to show that \[
\frac{\lambda}{\sinh\lambda}\not\in(-\infty,0],\]
for all $\lambda\in\mathbb{C}$ with $-\pi<\Im\lambda<\pi$. Let $\nu+ib\in\mathbb{C}$,
$|b|<\pi.$ Then \begin{alignat}{1}
\frac{\nu+ib}{\sinh(\nu+ib)} & =\frac{(\nu+ib)\sinh(\nu-ib)}{|\sinh(\nu+ib)|^{2}}\nonumber \\
 & =\frac{\nu\sinh\nu\cos b+b\cosh\nu\sin b+i(b\sinh\nu\cos b-\nu\cosh\nu\sin b)}{|\sinh(\nu+ib)|^{2}}.\label{eq:re,im z/sh z}\end{alignat}
So \begin{alignat*}{1}
\frac{\nu+ib}{\sinh(\nu+ib)}\in\mathbb{R} & \Leftrightarrow b\sinh\nu\cos b=\nu\cosh\nu\sin b\\
 & \Leftrightarrow b\cot b=\nu\coth\nu\vee\nu=0.\end{alignat*}
But $b\cot b\leq1$ and $\nu\coth\nu>1$ for $\nu\neq0$; and $\nu=0$
means \[
\frac{\nu+ib}{\sinh(\nu+ib)}=\frac{b}{\sin b}>0.\]
We can conclude \begin{equation}
\frac{\nu+ib}{\sinh(\nu+ib)}\not\in(-\infty,0],\ \quad\nu\in\mathbb{R},\,-\pi<b<\pi.\end{equation}
The second inequality is easy, since\begin{alignat*}{1}
\left|\frac{\nu+ib}{\sinh(\nu+ib)}\right|^{2} & =\frac{\nu^{2}+b^{2}}{\sinh^{2}\nu+\sin^{2}b}\\
 & \leq\frac{\nu^{2}+b^{2}}{\sinh^{2}\nu}\\
 & =\left(1+\frac{b^{2}}{\nu^{2}}\right)\left(\frac{\nu}{\sinh\nu}\right)^{2}.\end{alignat*}
To verify the first inequality we show that \[
f(\nu):=\left|\frac{\nu+ib}{\sinh(\nu+ib)}\right|^{2}\leq\frac{b^{2}}{\sin^{2}b},\;\mbox{for all }\nu\in\mathbb{R}.\]
This is obviously true for $\nu=0$, and since $f(-\nu)=f(\nu)$,
we may restrict our analysis to the case $\nu\geq0$:\begin{alignat*}{1}
f'(\nu) & =\frac{2\nu(\sinh^{2}\nu+\sin^{2}b)-(\nu^{2}+b^{2})\sinh(2\nu)}{(\sinh^{2}\nu+\sin^{2}b)^{2}}\\
 & =\frac{2\nu\sinh\nu(\sinh\nu-\nu\cosh\nu)+2\nu\sin^{2}b-b^{2}\sinh(2\nu)}{(\sinh^{2}\nu+\sin^{2}b)^{2}}\\
 & \leq0,\end{alignat*}
since $\sinh\nu\leq\nu\cosh\nu$ and $2\nu\sin^{2}b\leq\sinh(2\nu)\sin^{2}b\leq b^{2}\sinh(2\nu)$.
This means that $f$ stays below $\frac{b^{2}}{\sin^{2}b}$ for all
$\nu\in\mathbb{R}$.
\end{proof}
\begin{lem}
The function\begin{subequations}\label{eq:convex psi} \begin{alignat}{1}
\psi(\lambda) & =\lambda\coth\lambda-a\frac{\lambda}{\sinh\lambda}\\
 & =(1-a)\lambda\coth\lambda+a\lambda\tanh\frac{\lambda}{2}\\
 & =(1+a)\lambda\coth\lambda-a\lambda\coth\frac{\lambda}{2}\end{alignat}
\end{subequations}is holomorphic in $\mathbb{C}\setminus\left\{ \pi ki:k\in\mathbb{Z}^{\ast}\right\} $
for $a\neq\pm1$.\\
If $a=1$, then $\psi(\lambda)=\lambda\tanh\frac{\lambda}{2}$ is
holomorphic in $\mathbb{C}\setminus\left\{ (2k+1)\pi i:\: k\in\mathbb{Z}\right\} $;\\
if $a=-1$, then $\psi(\lambda)=\lambda\coth\frac{\lambda}{2}$ is
holomorphic in $\mathbb{C}\setminus\left\{ 2k\pi i:\: k\in\mathbb{Z}^{\ast}\right\} $.

In particular $\psi$ is holomorphic in the strip $\left\{ z\in\mathbb{C}:\;|\Im z|<\pi\right\} $
for any ${-1\leq a\leq1}$.
\end{lem}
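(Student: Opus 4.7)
The plan is to verify the two alternative representations in \eqref{eq:convex psi} by a short trigonometric calculation, and then read off the singularity sets of $\psi$ from them.

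First I would establish the basic half-angle identities
\[
\coth\lambda-\frac{1}{\sinh\lambda}=\frac{\cosh\lambda-1}{\sinh\lambda}=\tanh\frac{\lambda}{2},\qquad \coth\lambda+\frac{1}{\sinh\lambda}=\frac{\cosh\lambda+1}{\sinh\lambda}=\coth\frac{\lambda}{2},
\]
which follow from $\cosh\lambda\pm 1=2\cosh^{2}(\lambda/2)$ or $2\sinh^{2}(\lambda/2)$ together with $\sinh\lambda=2\sinh(\lambda/2)\cosh(\lambda/2)$. Multiplying the first identity by $a\lambda$ and adding to $\lambda\coth\lambda$ (after a small rearrangement) gives $(1-a)\lambda\coth\lambda+a\lambda\tanh(\lambda/2)=\lambda\coth\lambda-a\lambda/\sinh\lambda$; the second identity, treated the same way, yields the third form. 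In particular, substituting $a=1$ in the middle expression collapses $\psi$ to $\lambda\tanh(\lambda/2)$, and substituting $a=-1$ in the right-hand expression collapses $\psi$ to $\lambda\coth(\lambda/2)$.

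Next I would locate the poles of $\psi$ in the original form. Both $\lambda\coth\lambda$ and $\lambda/\sinh\lambda$ have a removable singularity at $\lambda=0$ (each extends analytically with value $1$), and simple poles at the remaining zeros of $\sinh$, namely $\lambda=k\pi i$, $k\in\mathbb{Z}^{\ast}$. Using $\sinh(k\pi i+\varepsilon)=(-1)^{k}\sinh\varepsilon$ and $\cosh(k\pi i+\varepsilon)=(-1)^{k}\cosh\varepsilon$, a direct computation gives
\[
\operatorname{Res}_{\lambda=k\pi i}\lambda\coth\lambda=k\pi i,\qquad \operatorname{Res}_{\lambda=k\pi i}\frac{\lambda}{\sinh\lambda}=(-1)^{k}k\pi i,
\]
so that
\[
\operatorname{Res}_{\lambda=k\pi i}\psi(\lambda)=k\pi i\bigl(1-(-1)^{k}a\bigr).
\]
This residue vanishes exactly when $a=1$ and $k$ is even, or $a=-1$ and $k$ is odd. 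Hence for $a\neq\pm 1$ every $k\pi i$, $k\in\mathbb{Z}^{\ast}$, remains a genuine pole, giving holomorphicity on $\mathbb{C}\setminus\{k\pi i:k\in\mathbb{Z}^{\ast}\}$; for $a=1$ only the odd multiples of $\pi i$ survive, matching the pole set of $\lambda\tanh(\lambda/2)$; for $a=-1$ only the nonzero even multiples survive, matching $\lambda\coth(\lambda/2)$.

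Finally, the strip statement is immediate: the set $\{z:|\Im z|<\pi\}$ contains no $k\pi i$ with $k\in\mathbb{Z}^{\ast}$, so regardless of the value of $a\in[-1,1]$, $\psi$ is holomorphic there. There is no real obstacle — the whole argument is bookkeeping of half-angle identities plus one residue calculation; the only thing to be careful about is checking that the two special values $a=\pm 1$ really kill the residues at the expected subset of poles, which is what the factor $1-(-1)^{k}a$ records.
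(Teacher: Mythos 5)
Your argument is correct and follows the same basic route as the paper: verify the two half-angle rewritings of $\psi$, then read off the singularity sets. The one place you go beyond the paper is the explicit residue computation $\operatorname{Res}_{\lambda=k\pi i}\psi=k\pi i\bigl(1-(-1)^k a\bigr)$, which the paper skips (it merely asserts the holomorphicity "follows easily" from the rewritten forms). That computation is a worthwhile addition: since each of $\lambda\coth\lambda$, $\lambda/\sinh\lambda$, $\lambda\tanh(\lambda/2)$, $\lambda\coth(\lambda/2)$ has poles at various $k\pi i$, one really does need to rule out accidental cancellation when $a\neq\pm1$, and your residue factor $1-(-1)^k a$ does exactly that, while also cleanly explaining why $a=1$ kills the even poles and $a=-1$ kills the odd ones. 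So same approach, with a gap in the paper's terse argument explicitly closed.
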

\begin{proof}
The holomorphic properties of $\psi$ follow easily from those of
the trigonometric function $\coth,\,\tanh$, and the equalities \eqref{eq:convex psi},
which are easy to verify:\begin{alignat*}{1}
\psi(\lambda) & =\lambda\coth\lambda-a\frac{\lambda}{\sinh\lambda}\\
 & =(1-a)\lambda\coth\lambda+a\lambda\frac{\cosh\lambda-1}{\sinh\lambda}\\
 & =(1-a)\lambda\coth\lambda+a\lambda\tanh\frac{\lambda}{2},\end{alignat*}
and\begin{alignat*}{1}
\psi(\lambda) & =(1+a)\lambda\coth\lambda-a\lambda\frac{\cosh\lambda+1}{\sinh\lambda}\\
 & =(1+a)\lambda\coth\lambda-a\lambda\coth\frac{\lambda}{2}.\end{alignat*}

\end{proof}
\begin{lem}
For $\nu+ib\in\mathbb{C}$, $\left|b\right|<\pi$, $\psi(ib)$ is
real and\begin{equation}
\Re\left(\psi(\nu+ib)\right)\geq\psi(ib),\end{equation}
where \begin{subequations}\label{eq:convex psi ib} \begin{alignat}{1}
\psi(ib) & =b\cot b-a\frac{b}{\sin b}\\
 & =(1-a)b\cot b+a\left(-b\tan\frac{b}{2}\right)\\
 & =(1+a)b\cot b-ab\cot\frac{b}{2}.\end{alignat}
\end{subequations}
\end{lem}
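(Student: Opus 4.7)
The plan is to reduce the claim to three pointwise bounds, one for each of the holomorphic building blocks already appearing in the decompositions \eqref{eq:convex psi}.

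First, substituting $\lambda=ib$ and using $\coth(ib)=-i\cot b$, $\tanh(ib/2)=i\tan(b/2)$, and $\coth(ib/2)=-i\cot(b/2)$, one sees that $\psi(ib)=b\cot b-ab/\sin b$ is real, and the three equivalent forms in \eqref{eq:convex psi ib} follow directly from \eqref{eq:convex psi}. Next, I rewrite
$$\psi(\lambda) = (1-a)\,\lambda\coth\lambda + a\,\lambda\tanh(\lambda/2) \quad \text{if } 0\le a\le 1,$$
$$\psi(\lambda) = (1+a)\,\lambda\coth\lambda + (-a)\,\lambda\coth(\lambda/2) \quad \text{if } -1\le a\le 0,$$
with coefficients in $[0,1]$ summing to $1$; applying the same decomposition on the imaginary axis, the claim $\Re\psi(\nu+ib)\ge\psi(ib)$ reduces by linearity to the three pointwise inequalities
$$\Re(\lambda\coth\lambda)\ge b\cot b,\qquad \Re(\lambda\tanh(\lambda/2))\ge -b\tan(b/2),\qquad \Re(\lambda\coth(\lambda/2))\ge b\cot(b/2),$$
for $\lambda=\nu+ib$ with $|b|<\pi$.

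To prove these I would use the standard identities (obtained by multiplying numerator and denominator by the conjugate)
$$\coth(\nu+ib)=\frac{\sinh(2\nu)-i\sin(2b)}{\cosh(2\nu)-\cos(2b)},\quad \tanh\tfrac{\nu+ib}{2}=\frac{\sinh\nu+i\sin b}{\cosh\nu+\cos b},\quad \coth\tfrac{\nu+ib}{2}=\frac{\sinh\nu-i\sin b}{\cosh\nu-\cos b}.$$
After clearing the strictly positive denominators, the three target differences $\Re(\cdot)-(\cdot)|_{\lambda=ib}$ reduce respectively to the numerators
$$4\sinh\nu\sin b\bigl(\nu\cosh\nu\sin b-b\cos b\sinh\nu\bigr),\qquad \nu\sinh\nu(1+\cos b)+b\sin b(\cosh\nu-1),$$
$$4\sinh(\nu/2)\sin(b/2)\bigl(\nu\cosh(\nu/2)\sin(b/2)-b\cos(b/2)\sinh(\nu/2)\bigr),$$
each of which must be shown nonnegative. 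The middle expression is $\ge 0$ by inspection, since $|b|<\pi$ forces $b\sin b\ge 0$ and $1+\cos b>0$. The first and third, after dividing by their manifestly nonnegative prefactors, reduce (in the range where $\cos b>0$, respectively $\cos(b/2)>0$) to
$$\nu\coth\nu\ge b\cot b\qquad\text{and}\qquad \nu\coth(\nu/2)\ge b\cot(b/2),$$
which follow from the sharper bounds $\nu\coth\nu\ge 1\ge b\cot b$ on $(-\pi,\pi)$ and $\nu\coth(\nu/2)\ge 2\ge b\cot(b/2)$ on $(-\pi,\pi)$, both consequences of the elementary chain $\tanh t\le t\le\tan t$ for $t\in[0,\pi/2)$.

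Since $\psi$ is even in $\lambda$ (both $\lambda\coth\lambda$ and $\lambda/\sinh\lambda$ are even), one has $\Re\psi(-\nu+ib)=\Re\psi(\nu+ib)$, so it is harmless to restrict to $\nu\ge 0$. The remaining regime $|b|\in[\pi/2,\pi)$ in the $\lambda\coth\lambda$-case is dealt with separately: there $b\cot b\le 0$ while $\Re(\lambda\coth\lambda)$ remains $\ge 0$, making the inequality automatic. The only technical obstacle worth naming is keeping track of signs as $|b|$ crosses $\pi/2$, which forces this brief case-split; the remainder is routine trigonometric and hyperbolic book-keeping.
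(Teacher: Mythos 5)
Your overall route — split $\psi$ into the convex combinations already recorded in \eqref{eq:convex psi}, and prove the three building-block inequalities $\Re(\lambda\coth\lambda)\geq b\cot b$, $\Re(\lambda\tanh(\lambda/2))\geq -b\tan(b/2)$, $\Re(\lambda\coth(\lambda/2))\geq b\cot(b/2)$ via the elementary chain $\nu\coth\nu\geq 1\geq b\cot b$ — is exactly the paper's, and your numerator computations are all correct. The flaw is in the case-split you append at $|b|=\pi/2$. After restricting to $\nu,b\geq 0$ and dividing the first numerator by the nonnegative prefactor $4\sinh\nu\sin b$, the residual inequality
\begin{equation*}
\nu\cosh\nu\sin b - b\cos b\sinh\nu \geq 0
\end{equation*}
is equivalent to $\nu\coth\nu\geq b\cot b$ by dividing by $\sinh\nu\sin b>0$; no sign constraint on $\cos b$ ever enters, so the parenthetical ``in the range where $\cos b>0$'' is spurious and the reduction already covers all of $(0,\pi)$. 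Worse, the substitute argument you supply for $|b|\in[\pi/2,\pi)$ is actually false: it is \emph{not} true that $\Re(\lambda\coth\lambda)\geq 0$ there. At $\nu=0$ one has $\Re(ib\coth(ib))=b\cot b<0$ for $b\in(\pi/2,\pi)$, and by continuity this negativity persists for all small $\nu\neq 0$, so the inequality is not ``automatic.'' The proof is repaired simply by deleting the case-split: since $\nu\coth\nu\geq 1\geq b\cot b$ holds on all of $|b|<\pi$, the main reduction already handles the entire range. (Compare the paper's one-line rearrangement $\Re(\lambda\coth\lambda)-b\cot b=\sinh^2\nu\,\frac{\nu\coth\nu-b\cot b}{\sinh^2\nu+\sin^2 b}$, which makes this transparent and requires no cases.)
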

\begin{proof}
\eqref{eq:convex psi ib} is clear. Let $\nu+ib\in\mathbb{C},\;|b|\leq\pi$,
then\begin{alignat*}{1}
\Re\left((\nu+ib)\coth(\nu+ib)\right) & =\frac{1}{2}\frac{\nu\sinh(2\nu)+b\sin(2b)}{\sinh^{2}\nu+\sin^{2}b}\\
 & =\frac{\nu\coth\nu\sinh^{2}\nu+b\cot b\sin^{2}b}{\sinh^{2}\nu+\sin^{2}b}\\
 & =b\cot b+\sinh^{2}\nu\frac{\nu\coth\nu-b\cot b}{\sinh^{2}\nu+\sin^{2}b}\\
 & \geq b\cot b,\end{alignat*}
and for $\left|b\right|\leq\frac{\pi}{2}$:\begin{alignat*}{1}
\Re\left((\nu+ib)\tanh(\nu+ib)\right) & =\frac{1}{2}\frac{\nu\sinh(2\nu)-b\sin(2b)}{\sinh^{2}\nu+\cos^{2}b}\\
 & =\frac{\nu\coth\nu\sinh^{2}\nu-b\tan b\cos^{2}b}{\sinh^{2}\nu+\cos^{2}b}\\
 & =-b\tan b+\sinh^{2}\nu\frac{\nu\coth\nu+b\tan b}{\sinh^{2}\nu+\cos^{2}b}\\
 & \geq-b\tan b.\end{alignat*}
This gives for $\lambda=\nu+ib\in\mathbb{C}$, $\left|b\right|<\pi$:\begin{alignat*}{1}
\Re\left(\psi(\lambda)\right) & =\begin{cases}
\Re\left((1-a)\lambda\coth\lambda+a\lambda\tanh\frac{\lambda}{2}\right) & ,\;\mbox{if }a\geq0\\
\Re\left((1+a)\lambda\coth\lambda-a\lambda\coth\frac{\lambda}{2}\right) & ,\;\mbox{if }a<0\end{cases}\\
 & \geq\begin{cases}
(1-a)b\cot b+a(-b\tan\frac{b}{2}) & ,\;\mbox{if }a\geq0\\
(1+a)b\cot b-ab\cot\frac{b}{2} & ,\;\mbox{if }a<0\end{cases}\\
 & =\psi(ib).\end{alignat*}

\end{proof}
With these preliminary lemmata we can start estimating $h$ by moving
the path of integration to $\lambda\mapsto\lambda+ib$:

\begin{lem}
\label{lem:main estimate}There is a constant $C>0$, such that for
all $x,\xi\in\mathbb{R}^{n}$, $u\in\mathbb{R}$ and $b\in\mathbb{R},\left|b\right|<\pi$
the following estimate holds:\begin{equation}
\left|h(x,\xi,u)\right|\leq C\left(\frac{b}{\sin b}\right)^{\alpha}\exp\left(-2bu-\left(b\cot b-\frac{b}{\sin b}\frac{2x\cdot\xi}{\left|x\right|^{2}+\left|\xi\right|^{2}}\right)\left(\left|x\right|^{2}+\left|\xi\right|^{2}\right)\right),\end{equation}
with $\alpha=\max\left(0,\frac{n}{2}-1\right)$.
\end{lem}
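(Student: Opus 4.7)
The plan is a contour-shift argument. By the two preceding lemmas, the integrand $V(\lambda) e^{-\psi(\lambda) R^{2} + 2i\lambda u}$ in \eqref{eq:simple kernel} is holomorphic in the strip $\{z \in \mathbb{C} : |\Im z| < \pi\}$, and $(\lambda/\sinh\lambda)^{n/2}$ gives uniform exponential decay in $|\Re \lambda|$ on horizontal lines in this strip. For any fixed $b \in (-\pi, \pi)$, Cauchy's theorem therefore lets me shift the integration contour from $\mathbb{R}$ to $\mathbb{R} + ib$:
\begin{equation*}
h(x,\xi,u) = \int_{-\infty}^{\infty} V(\lambda + ib)\, \exp\bigl(-\psi(\lambda+ib) R^{2} + 2i(\lambda+ib) u\bigr)\, d\lambda.
\end{equation*}

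Next I take absolute values. Since $|e^{2i(\lambda+ib)u}| = e^{-2bu}$ is independent of $\lambda$, and the preceding lemma gives $\Re\psi(\lambda+ib) \geq \psi(ib)$, both exponential factors can be pulled out of the integral to yield
\begin{equation*}
|h(x,\xi,u)| \leq e^{-2bu - \psi(ib)R^{2}} \int_{-\infty}^{\infty} |V(\lambda+ib)|\, d\lambda.
\end{equation*}
Since $\psi(ib) = b\cot b - (b/\sin b)\, a$ with $a = 2x\cdot\xi/R^{2}$, the displayed exponential already coincides with the one in the statement of the lemma; it remains only to control the $\lambda$-integral by $C(b/\sin b)^{\alpha}$.

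To estimate $I(b) := \int_{-\infty}^{\infty} |V(\lambda+ib)|\, d\lambda$, I would split the real line at the natural crossover $|\lambda| = \sin b$ of the two bounds \eqref{eq:V bound small} and \eqref{eq:V bound large}. For $|\lambda| \leq \sin b$, the uniform bound $|V(\lambda+ib)| \leq (b/\sin b)^{n/2}$ contributes $2 \sin b \cdot (b/\sin b)^{n/2} = 2 b \cdot (b/\sin b)^{n/2-1}$, which is $\lesssim (b/\sin b)^{\alpha}$ since $b \leq \pi$ and $b/\sin b \geq 1$. For $|\lambda| \geq \sin b$, apply the second bound and split further at $|\lambda| = b$: on the intermediate region $\sin b \leq |\lambda| \leq b$ bound $(1+b^{2}/\lambda^{2})^{n/4} \lesssim (b/|\lambda|)^{n/2}$ and use $\lambda/\sinh\lambda \leq 1$, then integrate $(b/|\lambda|)^{n/2}$ directly; on the outer region $|\lambda| \geq b$ the factor $(1+b^{2}/\lambda^{2})^{n/4}$ is bounded by a constant, while $(\lambda/\sinh\lambda)^{n/2}$ is absolutely integrable over $\mathbb{R}$ by the exponential decay of $\sinh$.

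The main technical obstacle is to track the exponent of $b/\sin b$ sharply through each of the three pieces as $b \to \pi^{-}$. The split points $|\lambda| = \sin b$ and $|\lambda| = b$ are dictated by precisely this balance: they identify the regions on which each of the competing bounds on $|V|$ is the sharper one. Summing the three contributions yields an estimate of order $(b/\sin b)^{n/2-1}$, matching the exponent $\alpha = \max\{0, n/2-1\}$ in the statement, with the extra factor of $b$ being harmless since $b \leq \pi$.
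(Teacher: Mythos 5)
Your argument follows the paper's proof essentially step for step: shift the contour to $\mathbb{R}+ib$ using the holomorphicity and decay of the integrand, pull out the exponential factor via $\Re\psi(\lambda+ib)\geq\psi(ib)$ and $|e^{2i(\lambda+ib)u}|=e^{-2bu}$, and then bound $\int|V(\lambda+ib)|\,d\lambda$ by a three-piece split using \eqref{eq:V bound small} near the origin and \eqref{eq:V bound large} farther out. The only cosmetic difference is your choice of break points $|\lambda|=\sin b$ and $|\lambda|=b$ versus the paper's $|\lambda|=\pi-|b|$ and $|\lambda|=\pi$; these are asymptotically the same as $|b|\to\pi$ (since $\sin b\sim\pi-|b|$) and both give the same bound $(b/\sin b)^{n/2-1}$, so the two versions are interchangeable.
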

\begin{proof}
We have\begin{alignat*}{1}
\left|h(x,\xi,u)\right| & =\left|\int\limits _{-\infty}^{\infty}V(\lambda)e^{-\psi(\lambda)R^{2}+2i\lambda u}d\lambda\right|\\
 & =\left|\int\limits _{-\infty}^{\infty}V(\lambda+ib)e^{-\psi(\lambda+ib)R^{2}+2i(\lambda+ib)u}d\lambda\right|\\
 & \leq\int\limits _{-\infty}^{\infty}\left|V(\lambda+ib)\right|e^{-\Re\psi(\lambda+ib)R^{2}-2bu}d\lambda\\
 & =e^{-\psi(ib)R^{2}-2bu}\int\limits _{-\infty}^{\infty}\left|V(\lambda+ib)\right|d\lambda.\end{alignat*}
Now we split the remaining integral with $r:=\pi-\left|b\right|$:\begin{alignat*}{1}
\int\limits _{-\infty}^{\infty}\left|V(\lambda+ib)\right|d\lambda & =2\int\limits _{0}^{\infty}\left|V(\lambda+ib)\right|d\lambda\\
 & \lesssim\int\limits _{0}^{r}\left|\frac{b}{\sin b}\right|^{\frac{n}{2}}d\lambda+\int\limits _{r}^{\pi}\left(1+\frac{b^{2}}{\lambda^{2}}\right)^{\frac{n}{4}}d\lambda+\int\limits _{\pi}^{\infty}\left|\frac{\lambda}{\sinh\lambda}\right|^{\frac{n}{2}}d\lambda\\
 & \lesssim r\left(\frac{b}{\sin b}\right)^{\frac{n}{2}}+\int\limits _{r}^{\pi}\frac{1}{\lambda^{n/2}}d\lambda+1\\
 & \lesssim\left(\frac{b}{\sin b}\right)^{\frac{n}{2}-1}+\left(\frac{1}{r}\right)^{\frac{n}{2}-1}+1\\
 & \lesssim\left(\frac{b}{\sin b}\right)^{\alpha},\end{alignat*}
since \begin{equation}
\frac{b}{\sin b}\backsim\frac{1}{\pi-\left|b\right|}=\frac{1}{r}.\label{eq:b/sin b asym}\end{equation}

\end{proof}
Observe that we can assume $u\geq0$, since $h(x,\xi,-u)=h(x,\xi,u)$. 

Setting $b=b_{0}$, where $b_{0}$ parametrizes the corresponding
(shortest) geodesic, would not give good results in every case. If
$b$ tends to $\pi$, the estimate would blow up. To avoid this problem,
just set $b=b_{0}-\epsilon$ with an appropriate $\epsilon>0$. (Since
we assumed $u\geq0$, we have $b_{0}\geq0$.

\begin{prop}
For $\zeta=(x,0),\;\eta=(\xi,u)\in\mathbb{R}^{n+1}$ we have\begin{equation}
\left|h(x,\xi,u)\right|\lesssim\min\left(1+\frac{d_{CC}(\zeta,\eta)}{\left|x+\xi\right|},1+d_{CC}(\zeta,\eta)^{2}\right)^{\alpha}e^{-d_{CC}(\zeta,\eta)^{2}},\end{equation}
with $\alpha=\max(\frac{n}{2}-1,0)$.
\end{prop}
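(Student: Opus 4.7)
The plan is to invoke Lemma~\ref{lem:main estimate} with a parameter $b\in(-\pi,\pi)$ adapted to each alternative in the minimum. We may assume $u\geq 0$, since $|h(x,\xi,u)|$ is even in $u$. Let $b_0\in[0,\pi)$ denote the unique solution of \eqref{eq:solution new} in this interval. By the summary theorem of Section~\ref{sec:Geometry-introduced-by}, the argument of the exponential in Lemma~\ref{lem:main estimate} at $b=b_0$ equals exactly $-d_{CC}(\zeta,\eta)^{2}$, so the lemma already yields
\[
|h(x,\xi,u)|\;\lesssim\;\Bigl(\tfrac{b_0}{\sin b_0}\Bigr)^{\!\alpha} e^{-d_{CC}(\zeta,\eta)^{2}}.
\]
Since $\alpha\geq 0$, the minimum bound will follow as soon as the prefactor $(b/\sin b)^{\alpha}$ can be controlled by each of $(1+d_{CC}/|x+\xi|)^{\alpha}$ and $(1+d_{CC}^{2})^{\alpha}$ separately.

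For the first alternative I keep $b=b_0$ and bound $b_0/\sin b_0$ geometrically. If $b_0\leq 2\pi/3$, the ratio is an absolute constant and nothing is required. If $b_0\in[2\pi/3,\pi)$, the identity
\[
R^{2}-2x\cdot\xi\cos b_0\;=\;(1+\cos b_0)R^{2}+(-\cos b_0)|x+\xi|^{2}
\]
combined with \eqref{eq:dcc1} and $-\cos b_0\geq\tfrac12$ yields $d_{CC}(\zeta,\eta)^{2}\geq\tfrac12(b_0/\sin b_0)^{2}|x+\xi|^{2}$, from which $b_0/\sin b_0\lesssim d_{CC}(\zeta,\eta)/|x+\xi|$ is immediate.

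For the second alternative I proceed by a dichotomy. If $b_0/\sin b_0\leq 1+d_{CC}^{2}$, take $b=b_0$ as above. Otherwise $b_0$ is extremely close to $\pi$, and I shift to $b:=\pi-c/(1+d_{CC}^{2})<b_0$ for a small absolute constant $c$, so that $b/\sin b\sim 1+d_{CC}^{2}$. The exponent in Lemma~\ref{lem:main estimate},
\[
\Phi(b)\;:=\;-2bu-R^{2}\!\left(b\cot b-\tfrac{b}{\sin b}\,a\right),
\]
has derivative $\Phi'(b)=-2u+R^{2}\mu(b)$; since $\mu$ is monotone increasing on $(0,\pi)$ with $\mu(b_0)=2u/R^{2}$, the function $\Phi$ attains its global minimum $-d_{CC}^{2}$ at $b_0$, and the shifted value exceeds this by $\int_{b}^{b_0}(2u-R^{2}\mu(s))\,ds$. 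Using the asymptotics $\mu(s)\sim (1+a)\pi/(\pi-s)^{2}$ as $s\to\pi^-$ together with the relation $(\pi-b_0)^{2}\sim\pi|x+\xi|^{2}/(2u)$ that comes from $\mu(b_0)=2u/R^{2}$ (and which in turn forces $d_{CC}^{2}\sim 2\pi u$ in this regime), an elementary evaluation of the integral gives $\Phi(b)-\Phi(b_0)\lesssim u(\pi-b)\lesssim u/(1+d_{CC}^{2})=O(1)$, and the second bound follows.

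The principal obstacle is the degenerate case $a=-1$ (i.e.\ $\xi=-x$) with $2u\geq\pi|x|^{2}$: here \eqref{eq:solution new} has no solution in $[0,\pi)$, so $b_0$ must be interpreted as the boundary value $\pi$. Fortunately the singularities of $b\cot b$ and of $(b/\sin b)\,a$ at $b=\pi$ cancel when $a=-1$, via the identity $b\cot b+b/\sin b=b\cot(b/2)$, so $\Phi$ extends continuously to $b=\pi$ with value $-2\pi u=-d_{CC}^{2}$; the shift $b=\pi-c/(1+d_{CC}^{2})$ then yields the second bound by a direct Taylor estimate applied to the now-smooth $\Phi$, while the first bound is automatic since $|x+\xi|=0$ makes $1+d_{CC}/|x+\xi|=+\infty$.
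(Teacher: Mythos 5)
Your proposal is correct in its overall plan (move the contour, bound the prefactor, estimate the exponent loss, optimize the shift), but the route differs from the paper's in ways worth noting.

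For the first alternative you keep $b=b_0$ and exploit the polarization identity $R^2-2x\cdot\xi\cos b_0=(1+\cos b_0)R^2+(-\cos b_0)|x+\xi|^2$, dropping the non-negative first term when $-\cos b_0\geq\tfrac12$. This is cleaner than the paper's treatment, which splits on the sign of $a$ and pays an extra factor of $3$; both give $b_0/\sin b_0\lesssim 1+d_{CC}/|x+\xi|$.

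For the second alternative your argument is genuinely different from the paper's. You work with $\Phi(b)=-2bu-R^2\psi(ib)$ as a function on $(0,\pi)$, use $\Phi'(b)=-2u+R^2\mu(b)$ together with $\mu\geq0$ and $\mu(b_0)=2u/R^2$ to bound $\Phi(b)-\Phi(b_0)\leq 2u(b_0-b)$, and then must argue $u\lesssim 1+d_{CC}^2$. That last step is where your proposal leans on the asymptotics $\mu(s)\sim(1+a)\pi/(\pi-s)^2$ and the derived relation $d_{CC}^2\sim2\pi u$; this is morally sound, but the implicit constants must be shown uniform in $a\in(-1,1]$, and this requires some care (one has $l(b_0)/\mu(b_0)=b_0+\psi(ib_0)/\mu(b_0)\geq\pi-2\delta$ for $\delta=\pi-b_0$, which does the job once $\delta$ is small, but you should verify this). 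The paper instead avoids asymptotics entirely: it writes $\psi(ib)$ as a convex combination of $b\cot b$ and $-b\tan(b/2)$ (or $-b\cot(b/2)$ when $a<0$), uses the monotonicity of $\cot$ and $\tan$ on $(0,\pi)$ to get the clean algebraic inequality $\psi(ib_0-\epsilon)\geq(1-\epsilon/b_0)\,\psi(ib_0)$, and then the exponent loss is immediately $\tfrac{\epsilon}{b_0}\,d_{CC}^2$ without needing to compare $u$ with $d_{CC}^2$ at all. This is both shorter and manifestly uniform in $a$.

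Two precision gaps to address in your version: (i) for small $d_{CC}^2$ the shift $b=\pi-c/(1+d_{CC}^2)$ may be nonpositive or exceed $b_0$, so one should either impose a threshold on $d_{CC}^2$ (below which the estimate is trivial) or mimic the paper's dichotomy on $b_0\lessgtr\pi/2$ and shift from $b_0$ rather than from $\pi$; (ii) the case $x=\xi=0$ has $a$ undefined and $R=0$, so it does not fall under the "$a=-1$" discussion even though $\xi=-x$ — it must be handled separately (easy, since the $\psi$-term vanishes). The paper treats this case explicitly.

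Overall your proposal is a valid alternative proof once these points are tightened; the paper's route via the convex decomposition of $\psi(ib)$ is the more economical one.
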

\begin{proof}
First assume that $x\neq-\xi$. Then the shortest geodesic and therefore
the Carnot-Carathéodory distance will be parametrized by $b_{0}\in(\pi,\pi)$.
We claim that \[
\frac{b_{0}^{2}}{\sin^{2}b_{0}}\leq3\left(1+\frac{d_{CC}(\zeta,\eta)^{2}}{\left|x+\xi\right|^{2}}\right).\]
To see this, first assume that $a<0$, i.e. $\xi\cdot x<0$. Then\begin{alignat*}{1}
d_{CC}(\zeta,\eta)^{2} & =\frac{b_{0}^{2}}{\sin^{2}b_{0}}\left(\left|x\right|^{2}+\left|\xi\right|^{2}-2\xi\cdot x\cos b_{0}\right)\\
 & \geq\frac{b_{0}^{2}}{\sin^{2}b_{0}}\left(\left|x\right|^{2}+\left|\xi\right|^{2}+2\xi\cdot x\right)\\
 & =\frac{b_{0}^{2}}{\sin^{2}b_{0}}\left|x+\xi\right|^{2}.\end{alignat*}
If $a\geq0$, i.e. $\xi\cdot x\geq0$, then\begin{alignat*}{1}
d_{CC}(\zeta,\eta)^{2} & \geq\frac{1}{3}\frac{b_{0}^{2}}{\sin^{2}b_{0}}\left(1-a\cos b_{0}\right)\left(\left|x\right|^{2}+2\xi\cdot x+\left|\xi\right|^{2}\right)\\
 & \geq\frac{1}{3}\left(\frac{b_{0}^{2}}{\sin^{2}b_{0}}-a\frac{b_{0}^{2}\cos b_{0}}{\sin^{2}b_{0}}\right)\left|x+\xi\right|^{2}.\end{alignat*}
If $b_{0}>\frac{\pi}{2}$, then $a\frac{b_{0}^{2}}{\sin^{2}b_{0}}\cos b_{0}<0$
and if $b_{0}\geq\frac{\pi}{2}$, then $a\frac{b_{0}^{2}}{\sin^{2}b_{0}}\cos b_{0}\leq\frac{b_{0}^{2}}{\sin^{2}b_{0}}\leq\frac{\pi^{2}}{4}.$
So\begin{alignat*}{1}
d_{CC}(\zeta,\eta)^{2} & \geq\frac{1}{3}\left(\frac{b_{0}^{2}}{\sin^{2}b_{0}}-\frac{\pi^{2}}{4}\right)\left|x+\xi\right|^{2}\\
 & \geq\frac{1}{3}\left(\frac{b_{0}^{2}}{\sin^{2}b_{0}}-3\right)\left|x+\xi\right|^{2}.\end{alignat*}
This proves the claim. Now use Lemma \ref{lem:main estimate} with
$b=b_{0}$ to get\begin{equation}
\left|h(x,\xi,u)\right|\lesssim\left(1+\frac{d_{CC}(\zeta,\eta)}{\left|x+\xi\right|}\right)^{\alpha}e^{-d_{CC}(\zeta,\eta)^{2}}.\end{equation}
But this remains true, if $x=-\xi$ (and $x=\xi=0$), since the right
side becomes $+\infty$ in this case. To prove the remaining estimate\begin{equation}
\left|h(x,\xi,u)\right|\lesssim(1+d_{CC}(\zeta,\eta)^{2})^{\alpha}e^{-d_{CC}(\zeta,\eta)^{2}},\end{equation}
we, again, use lemma \ref{lem:main estimate}. Let $b_{0}\in\left[0,\pi\right]$
be the parameter for the Carnot-Carathéodory distance. As mentioned
above assume that $u\geq0$. If $b_{0}\leq\frac{\pi}{2}$, then $\frac{b_{0}}{\sin b_{0}}\leq\frac{\pi}{2}$,
and one can immediately use Lemma \ref{lem:main estimate} with $b=b_{0}$.
So assume that $b_{0}>\frac{\pi}{2}$. Now set $b=b_{0}-\epsilon$
with $\epsilon<\frac{\pi}{2}$, which will be specified later. 

If $b_{0}\neq\pi$, we have\begin{alignat*}{1}
b\cot b & =b_{0}\cot b_{0}+b\left(\cot b-\cot b_{0}\right)-\epsilon\cot b_{0}\\
 & \geq b_{0}\cot b_{0}-\epsilon\cot b_{0},\end{alignat*}
since $\cot$ is monotone decreasing on $(0,\pi)$; and\begin{alignat*}{1}
-b\tan\frac{b}{2} & =-b_{0}\tan\frac{b_{0}}{2}+b\left(\tan\frac{b_{0}}{2}-\tan\frac{b}{2}\right)+\epsilon\tan\frac{b_{0}}{2}\\
 & \geq-b_{0}\tan\frac{b_{0}}{2}+\epsilon\tan\frac{b_{0}}{2},\end{alignat*}
since $\tan$ is monotone increasing on $(0,\pi)$. Therefore\begin{alignat*}{1}
\psi(ib) & =\begin{cases}
(1-a)b\cot b+a(-b\tan\frac{b}{2}) & ,\: a\geq0\\
(1+a)b\cot b-ab\cot\frac{b}{2} & ,\: a<0\end{cases}\\
 & \geq\psi(ib_{0})-\frac{\epsilon}{b_{0}}\psi(ib_{0}).\end{alignat*}
The use of lemma \ref{lem:main estimate} with this $b$ yields:\begin{alignat}{1}
\left|h(x,\xi,u)\right| & \lesssim\left(\frac{b}{\sin b}\right)^{\alpha}e^{-\psi(ib_{0})R^{2}-2b_{0}u+\frac{\epsilon}{b_{0}}(\psi(ib_{0})+2b_{0}u)}\label{eq:h leq epsilon}\\
 & \lesssim\left(\frac{1}{\epsilon}\right)^{\alpha}e^{-(1-\frac{\epsilon}{b_{0}})d_{CC}(\zeta,\eta)^{2}},\nonumber \end{alignat}
first for $b_{0}\neq\pi$. But, if $b_{0}=\pi$, then we have $x=\xi=0$
or $x=-\xi$. We claim that in this case estimate \eqref{eq:h leq epsilon}
remains true: If $x=\xi=0$ then \begin{alignat*}{1}
\left|h(x,\xi,u)\right| & \lesssim\left(\frac{b}{\sin b}\right)^{\alpha}e^{-2b_{0}u+2b_{0}u\frac{\epsilon}{b_{0}}}\\
 & \lesssim\left(\frac{1}{\epsilon}\right)^{\alpha}e^{-(1-\frac{\epsilon}{b_{0}})d_{CC}(\zeta,\eta)^{2}},\end{alignat*}
since $2b_{0}u=2\pi u=d_{CC}(\zeta,\eta)^{2}$. And, if $x=-\xi$,
then $a=-1$ and $\psi(ib)=b\cot\frac{b}{2}\geq0$, so that\begin{alignat*}{1}
\left|h(x,\xi,u)\right| & \lesssim\left(\frac{b}{\sin b}\right)^{\alpha}e^{-\psi(ib)R^{2}-2b_{0}u+2b_{0}u\frac{\epsilon}{b_{0}}}\\
 & \lesssim\left(\frac{1}{\epsilon}\right)^{\alpha}e^{-2b_{0}u+2b_{0}u\frac{\epsilon}{b_{0}}}\\
 & =\left(\frac{1}{\epsilon}\right)^{\alpha}e^{-(1-\frac{\epsilon}{b_{0}})d_{CC}(\zeta,\eta)^{2}},\end{alignat*}
since again $2b_{0}u=2\pi u=d_{CC}(\zeta,\eta)^{2}.$

Setting $\epsilon=\frac{1}{1+d_{CC}(\zeta,\eta)^{2}}$, which optimizes
this inequality (up to a constant), gives us the desired result:\begin{equation}
\left|h(x,\xi,u)\right|\lesssim(1+d_{CC}(\zeta,\eta)^{2})^{\alpha}e^{-d_{CC}(\zeta,\eta)^{2}}.\end{equation}

\end{proof}
This gives the following main result:

\begin{thm}
For $\zeta=(x,0),\;\eta=(\xi,u)\in\mathbb{R}^{n+1}$ we have\begin{equation}
\left|K_{t}(x,\xi,u)\right|\lesssim t^{-\frac{n}{2}-1}\min\left(1+\frac{d_{CC}(\zeta,\eta)}{\left|x+\xi\right|},1+\frac{d_{CC}(\zeta,\eta)^{2}}{4t}\right)^{\alpha}e^{-\frac{1}{4t}d_{CC}(\zeta,\eta)^{2}},\end{equation}
with $\alpha=\max\left(\frac{n}{2}-1,0\right)$.
\end{thm}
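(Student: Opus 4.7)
The proof is essentially a rescaling argument that reduces the statement to the preceding Proposition. The plan is to use the identity \eqref{eq:h eq kt}, namely
\[
K_{t}(x,\xi,u)=(4\pi t)^{-n/2-1}\,h\!\left(\tfrac{x}{2\sqrt{t}},\tfrac{\xi}{2\sqrt{t}},\tfrac{u}{4t}\right),
\]
together with the homogeneity of the Carnot--Carath\'eodory distance under the dilations $\delta_{r}(x,u)=(rx,r^{2}u)$. First, I would verify that $d_{CC}$ is of degree $1$ under these dilations: a direct computation with the energy expression gives $\|(\delta_{r}\gamma)\dot{\;}(t)\|_{\delta_{r}\gamma(t)}=r\|\dot\gamma(t)\|_{\gamma(t)}$, so $L(\delta_{r}\gamma)=rL(\gamma)$, hence $d_{CC}(\delta_{r}\zeta,\delta_{r}\eta)=r\,d_{CC}(\zeta,\eta)$.

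Set $r=1/(2\sqrt{t})$ and write $\tilde{\zeta}=\delta_{r}(\zeta)=(\tilde{x},0)$, $\tilde{\eta}=\delta_{r}(\eta)=(\tilde{\xi},\tilde{u})$ with $\tilde{x}=x/(2\sqrt{t})$, $\tilde{\xi}=\xi/(2\sqrt{t})$, $\tilde{u}=u/(4t)$. Then the two quantities appearing in the estimate of the Proposition transform as
\[
d_{CC}(\tilde\zeta,\tilde\eta)^{2}=\frac{d_{CC}(\zeta,\eta)^{2}}{4t},\qquad \frac{d_{CC}(\tilde\zeta,\tilde\eta)}{|\tilde{x}+\tilde{\xi}|}=\frac{d_{CC}(\zeta,\eta)}{|x+\xi|},
\]
the second equality using that both numerator and denominator pick up the same factor $1/(2\sqrt{t})$.

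Next I would apply the Proposition directly to $h(\tilde{x},\tilde{\xi},\tilde{u})$ to get
\[
|h(\tilde{x},\tilde{\xi},\tilde{u})|\lesssim\min\!\left(1+\frac{d_{CC}(\tilde\zeta,\tilde\eta)}{|\tilde{x}+\tilde{\xi}|},\,1+d_{CC}(\tilde\zeta,\tilde\eta)^{2}\right)^{\alpha}\!e^{-d_{CC}(\tilde\zeta,\tilde\eta)^{2}}.
\]
Substituting the two transformation identities above and multiplying by the prefactor $(4\pi t)^{-n/2-1}$ yields exactly the claimed inequality.

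Since all the analytic work (contour shift, control of $V$ and $\psi$, and the case analysis for the $1+d_{CC}/|x+\xi|$ vs.\ $1+d_{CC}^{2}$ bounds) has already been carried out in Lemma~\ref{lem:main estimate} and the preceding Proposition, there is no genuine obstacle here; the only place where one has to be slightly careful is checking that the scaling of $d_{CC}$ under $\delta_{r}$ really is of degree $1$ (rather than the degree $2$ incorrectly announced in the introduction), which is why I would begin with that verification.
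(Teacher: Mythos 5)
Your proof is correct and takes essentially the same approach as the paper, whose proof simply cites \eqref{eq:h eq kt} and the homogeneity of $d_{CC}$; your version merely spells out the details that the paper leaves to the reader. You are also right that $d_{CC}$ scales with degree $1$ under $\delta_r$ (as confirmed, e.g., by $d_{CC}((0,0),(0,r^{2}u))=\sqrt{2\pi r^{2}u}=r\,d_{CC}((0,0),(0,u))$), so the statement of degree $2$ in the introduction is a misprint.
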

\begin{proof}
The proof is almost trivial, if one uses \eqref{eq:h eq kt} and the
homogeneity of $d_{CC}$.
\end{proof}
\nocite{Beals2000,Calin2005,Calin2005a,Folland1989,Greiner2002,Meyer2006,Stein1993,Strichartz1986,Strichartz1989,Sikora2004}\bibliographystyle{plain}
\bibliography{refs}

\end{document}